\numberwithin{equation}{section}
\newtheorem{theorem}{Theorem}[section]
\newtheorem{lemma}[theorem]{Lemma}
\newtheorem{definition}[theorem]{Definition}
\newtheorem{corollary}[theorem]{Corollary}
\newtheorem{claim}[theorem]{Claim}
\title[Stability for a relativistic wave equation in a waveguide]{
H\"older stability estimates for the determination of time-independent potentials in a relativistic wave equation in an infinite waveguide}
\author[M. Kumar]{Mandeep Kumar}
\address{Department of Mathematics, Indian Institute of Technology Ropar, Rupnagar, Punjab-140001, India}
\email{mandeep.sansanwal@gmail.com}
\author[P. Zimmermann]{Philipp Zimmermann}
\address{Departament de Matem\`atiques i Inform\`atica, Universitat de Barcelona, Barcelona, Spain}
\email{philipp.zimmermann@ub.edu}
\newcommand{\C}{{\mathbb C}}
\newcommand{\R}{{\mathbb R}}
\newcommand{\Z}{{\mathbb Z}}
\newcommand{\N}{{\mathbb N}}
\newcommand{\eps}{\epsilon}
\newcommand{\tempered}{\mathscr{S}^{\prime}}
\newcommand{\distr}{\mathscr{D}^{\prime}}
\newcommand{\norm}[1]{\lVert #1 \rVert}
\newcommand{\abs}[1]{\left\lvert #1 \right\rvert}
\DeclareMathOperator{\Div}{div} 
\DeclareMathOperator{\supp}{supp} 
\DeclareMathOperator{\dist}{dist} 
\newcommand{\im}{\mathsf{i}} 
\newcommand{\re}{\text{ Re}} 
\newcommand{\weak}{\rightharpoonup}
\newcommand{\weakstar}{\overset{\ast}{\rightharpoonup}}
\begin{document}

	\maketitle
	\begin{abstract}
    The main goal of this article is to establish H\"older stability estimates for the Calderón problem related to a relativistic wave equation. The principal novelty of this article is that the partial differential equation (PDE) under consideration depends on three unknown potentials, namely a temporal dissipative potential $A_0$, a spatial vector potential $A$ and an external potential $\Phi$. Moreover, the PDE is posed in an infinite waveguide geometry $\Omega=\omega\times\R$ and not on a bounded domain. For our proof it is essential that the potentials are time-independent as a key tool in this work are pointwise estimates for the Radon transform of the vector potential $\mathcal{A}=(A_0,\im A)$ and external potential $\Phi$. Furthermore, the demonstrated stability estimates hold for a wide range of $H^s$ Sobolev scales and a main contribution is to explicitly determine the dependence of the involved constants and the H\"older exponent on the Sobolev exponents of the potentials $A_0,A$ and $\Phi$.
		
		\medskip
		
		\noindent{\bf Keywords.} Relativistic wave equation, dissipation, infinite wave guide, inverse problems, stability
		
		\noindent{\bf Mathematics Subject Classification (2020)}: 35R30, 35L05, 44A12
	\end{abstract}

	\tableofcontents

    \section{Introduction}

    One of the most famous equations in relativistic quantum mechanics is beyond all doubt the \emph{Klein--Gordon equation}
    \begin{equation}
    \label{eq: Klein Gordon eq}
        \left(\partial_t^2 -\Delta+m^2\right)\Psi(x,t)=0\text{ in }\R^3\times \R,
    \end{equation}
    where $\Psi$ is a complex-valued wave function, $m>0$ is the mass of the particle and units are chosen such that $c=\hbar=e =1$. This equation is used to describe the dynamics of relativistic, spin 0 particles, like $\pi$ or $K$ mesons, and can be derived from the \emph{relativistic energy--momentum relation} 
    \begin{equation}
    \label{eq: relativistic energy momentum}
        p_\mu p^{\mu}=E^2-p^2=m^2
    \end{equation}
    together with the \emph{quantization rule}
    \begin{equation}
    \label{eq: quantization rule}
        p^\mu\,\rightarrow\,\im\partial^\mu\,\text{ i.e. }\,\begin{cases}
            E\,\rightarrow \, \im\partial_t, & \\
            p\,\rightarrow\,-\im \nabla.&
        \end{cases}
    \end{equation}
    Here, $E$ denotes the energy, $p$ the usual three momentum, $p^\mu=(E,p)$ the four momentum, $\partial^\mu=(\partial_t,-\nabla)$ and the index $\mu$ runs from 0 to 3. If we consider the dynamics of a charged particle with charge $q=-1$, then we are forced to replace the momentum $p^\mu$ in \eqref{eq: quantization rule} by 
    \begin{equation}
    \label{eq: gauge invariant}
        \pi^\mu\vcentcolon = p^\mu+ A^\mu,
    \end{equation}
    where $A^\mu=(A_0,A)$ is the electromagnetic four potential. Here, $A_0$ is the scalar potential and $A$ is the vector potential, which are related to the electric field $\mathbf{E}$ and magnetic field $\mathbf{B}$ by the formulas
    \begin{equation}
    \label{eq: electric magnetic potential}
        \mathbf{E}=-\nabla A_0-\partial_t A\text{ and }\mathbf{B}=\nabla \wedge A.
    \end{equation}
    Using \eqref{eq: gauge invariant} in the energy momentum identity \eqref{eq: relativistic energy momentum} and applying the quantization rule \eqref{eq: quantization rule}, one obtains the following \emph{modified Klein-Gordon equation}
    \begin{equation}
    \label{eq: modified Klein Gordon}
        \left[(\im\partial_t +A_0)^2-(\im\nabla -A)^2-m^2\right]\Psi(x,t)=0\text{ in }\R^3\times \R,
    \end{equation}
    which generalizes \eqref{eq: Klein Gordon eq} to the case of non-vanishing electromagnetic fields. 
    
    If we restrict the dynamics to an infinite waveguide $\Omega=\omega\times\R$ with $\omega$ being a smoothly bounded domain in $\R^2$ and assume the potential $A_0$ in \eqref{eq: modified Klein Gordon} is purely imaginary ($A_0\to \im A_0$), the third components of $A$ vanishes and replace the mass term $m^2$ by a generic external potential $\Phi$, then we end up with the following \emph{relativistic wave equation}\footnote{Throughout this work the notation $C_T$ for $C\subset\R^n$ and $T>0$ always refers to the space-time cylinder $C_T=C\times (0,T)$.}
    \begin{equation}
    \label{eq: relativistic schroedinger}
        \big[(\partial_{t}+ A_{0})^{2} -  (\nabla_x+ \im A)^{2}  -\partial^{2}_{y} + \Phi\big]u = 0  \text{ in } \Omega_T = \Omega \times (0,T),
    \end{equation}
    where we redefined $A\vcentcolon =(A_1,A_2)$, $T>0$ is a finite time horizon and typical points in $\Omega$ are denoted by $X:=(x,y)$ for $x\in\omega\subset \R^2$ and $y\in\R$. Moreover, we assume in the whole article that all potentials $A_0,A$ and $\Phi$ are time-independent. A motivation for equation \eqref{eq: relativistic schroedinger} can be found in the recent article \cite{reggia2023generalizing} in which the Maxwell equations for complex-valued electric and magnetic fields are investigated and aims to explain why magnetic monopoles could not experimentally detected although many theoretical considerations support their existence. 
    
    The main goal of this article is to show that the vector potential $\mathcal{A}=(A_0,\im A)$ and the potential $\Phi$ can be recovered in a H\"older stable way from the related \emph{Dirichlet to Neumann (DN) map} $\Lambda_{\mathcal{A},\Phi}$. 
    
   \subsection{Mathematical setup and main results}

      Let us next give a more concrete description of the problem considered in this article. Our starting point is the observation that for all potentials $(\mathcal{A}, \Phi)$ satisfying suitable regularity and decay conditions, the initial boundary value problem (IBVP) 
    \begin{equation}\label{maineqn}
    \begin{cases}
     \big[(\partial_{t}+ A_{0})^{2} -  (\nabla_x+ \im A)^{2}  -\partial^{2}_{y} + \Phi\big]u = 0 & \text{in } \Omega_T,\\
        u = f & \text{on } (\partial\Omega)_T,\\
          u(0) = 0,\, \partial_{t}u(0) = 0 & \text{in } \Omega,
    \end{cases}
\end{equation}
    has for all $f\in D^{3/2}((\partial\Omega)_T)$ a unique solution $u\colon \Omega_T\to \R$ satisfying
    \[
        u\in C([0,T];H^{1}(\Omega)) \cap C^{1}([0,T];L^{2}(\Omega))\text{ with }\partial_{\nu}u\in L^{2}((\partial\Omega)_T)
    \]
    and
    \begin{equation}
\label{eq: continuity IBVP}
    \|u\|_{L^{\infty}(0,T;H^{1}(\Omega))} + \|\partial_t u\|_{L^{\infty}(0,T;L^{2}(\Omega))} + \|\partial_{\nu}u\|_{L^{2}((\partial\Omega)_T)} \leq C\|f\|_{D^{3/2}((\partial\Omega)_T)}
\end{equation}
(see Section \ref{sec: Existence}). Here, the space of boundary values is defined by
   \begin{equation}
   \label{eq: data space}
   \begin{split}
       &D^{3/2}((\partial\Omega)_T) \\
       &\quad =\left\{f\in H^{\frac{3}{2},\frac{3}{2}}((\partial\Omega)_T): \left.f\right|_{t=0}= 0,\right.\left. \partial_tf,\partial_\tau f,\partial_y f\in L^2\left((\partial\Omega)_T; t^{-1}dSdt\right)\right\},
    \end{split}
   \end{equation}
   where $\partial_\tau f$ is a tangential derivative of $f$ with respect to $\partial \omega$ and we endow it with the norm
   \[
    \|f\|^2_{D^{3/2}((\partial\Omega)_T)}\vcentcolon =\|f\|^2_{H^{3/2,3/2}((\partial\Omega)_T)}+\sum_{\alpha=t,\tau,y}\|\partial_\alpha f\|^2_{L^2\left((\partial\Omega)_T; t^{-1}dSdt\right)}.
   \]
   For the precise definition of the space-time Sobolev spaces $H^{\frac{3}{2},\frac{3}{2}}((\partial\Omega)_T)$ we refer the reader to Section~\ref{preliminaries}. By the previous unique solvability assertion, we can introduce the DN map by
\begin{equation}
\label{eq: DN map}
\Lambda_{\mathcal{A},{\Phi}} \colon  D^{3/2}((\partial\Omega)_T) \to  L^{2}((\partial\Omega)_T)\text{ with }
\Lambda_{\mathcal{A},{\Phi}}f \vcentcolon =  \partial_{\nu}u|_{(\partial\Omega)_T},
\end{equation}
where $u$ is the unique solution of \eqref{maineqn}, and by \eqref{eq: continuity IBVP} it is a bounded operator. Then one may ask the following natural questions:

\begin{enumerate}[(i)]
		\item\label{Q:Uniq} \textbf{Uniqueness.}  Can one determine the vector potential $\mathcal{A}$ and external potential $\Phi$ in $\Omega$ from the DN map $\Lambda_{\mathcal{A},\Phi}$?
        \item\label{Q: Stab} \textbf{Stability.} Does there exist a modulus of continuity $\lambda\colon [0,\infty]\to [0,\infty]$ such that
        \begin{equation}
        \label{eq: stability est intro}
            \|(\mathcal{A}^1,\Phi_1)-(\mathcal{A}^2,\Phi_2)\|_1\lesssim \lambda(\|\Lambda_{\mathcal{A}^1,\Phi_1}-\Lambda_{\mathcal{A}^2,\Phi_2}\|_2)
        \end{equation}
        for all admissible pairs $(\mathcal{A}^j,\Phi_j)$ and certain norms $\|\cdot\|_k$ for $k=1,2$?
	\end{enumerate}

Before presenting the main result of this article, let us define the admissible class of vector and external potentials considered in this article.
\begin{definition}
\label{def: admissible class}
    For all $R_1,R_2>0$, $s_0\geq 2,s_1\geq 2$ and $s_2\geq 0$, we denote by
    \begin{enumerate}[(i)]
        \item\label{item: base case} $\mathscr{A}(R_1)$ the set of all pairs $(\mathcal{A},\Phi)\in C_0^2(\Omega)\times C(\overline{\Omega})$\footnote{Here, and throughout this article, we follow the convention to not explicitly depict the dependence on the finite dimensional target space in the notation for function spaces. For example, we write $L^{\infty}(\Omega)$ instead of $L^{\infty}(\Omega;\R^2)$.}, $\mathcal{A}=(A_0,\im A)$, satisfying
        \begin{equation}
            \|\mathcal{A}\|_{W^{2,\infty}(\Omega)}+\|\Phi\|_{L^{\infty}(\Omega)}\leq R_1
        \end{equation}
        \item\label{item: higher regular case} and $\mathscr{A}_{s_0,s_1,s_2}(R_1,R_2)$ stands for the set of all pairs $(\mathcal{A},\Phi)\in \mathscr{A}(R_1)$ having the following two properties:
        \begin{enumerate}[(a)]
            \item\label{regularity assump} $(\mathcal{A},\Phi)\in L^{\infty}(\R;H^{s_0}(\R^2)\times H^{s_1}(\R^2))\times L^{\infty}(\R; H^{s_2}(\R^2))$
            \item\label{uniform bound assump} $\|\mathcal{A}\|_{L^{\infty}(\R;H^{s_0}(\R^2)\times H^{s_1}(\R^2))} +\|\Phi\|_{L^{\infty}(\R; H^{s_2}(\R^2))} \leq R_2$.
        \end{enumerate}
    \end{enumerate}
\end{definition}

Now, we state the main result of this article.
\begin{theorem}
\label{main theorem}
    Let $\Omega=\omega\times\R$ be an infinite waveguide with $\omega\subset\R^2$ being a smoothly bounded domain and $T>\text{diam}(\omega)$. Suppose that $\alpha\in(0,\min\{1, (T-\text{diam}(\omega))/3\})$ and let $R_1,R_2>0$, $s_0,s_1\geq 2$ and $s_2\geq 0$.
     For any $\bar{s}_0,\bar{s}_1,\bar{s}_2\in\R$ satisfying
     \begin{equation}
     \label{eq: def coeff stability lemma pot}
         \mathfrak{b}_0\vcentcolon =s_0-\bar{s}_0,\,\mathfrak{b}_1\vcentcolon = s_1-\bar{s}_1,\,\mathfrak{b}_2\vcentcolon = s_2-\bar{s}_2\in (0,1),
     \end{equation}
    there exist $C=C(\omega,T,R_1,R_2) > 0$ and $\eta \in (0,1)$ such that
    \begin{equation}
    \label{eq: final stability estimate}
    \begin{split}
        &\| A^{1}_{0} -A^{2}_{0}\|^2_{L^{\infty}(\R_{y}; {H^{\bar{s}_0}(\R^2)})}+\| \nabla\wedge (A^1-A^2)\|_{L^{\infty}(\R_{y}; {H^{\bar{s}_1-1}(\R^2)})}^2 \\
        &+\norm{\Phi_{1}-\Phi_{2}}_{L^{\infty}(\R_{y}; {H^{\bar{s}_2}(\R^2)})}^2\\
        &\leq C \frac{4^{3\mathfrak{A}}}{(\mathfrak{C}\mathfrak{D})^2}\left(\frac{1+\mathfrak{A}}{\mathfrak{C}}\right)^{\frac{\gamma+\beta}{\gamma}+\frac{2\mathfrak{c}}{1+\mathfrak{a}+\mathfrak{c}}+\frac{\mathfrak{b}_2}{1+\bar{s}_2+\mathfrak{b}_2}}\norm{\Lambda_{\mathcal{A}^1,\Phi_1}-\Lambda_{\mathcal{A}^2,\Phi_2}}_{D^{3/2}((\partial\Omega)_T)\to L^2((\partial\Omega)_T)}^{\eta}
    \end{split}
    \end{equation}
    for all $(\mathcal{A}^{j},\Phi_{j})\in \mathscr{A}_{s_0,s_1,s_2}(R_{1},R_{2})$, $j=1,2$, satisfying the conditions
    \begin{gather}
    \Div_x A^1=\Div_x A^2,\,\|\mathcal{A}^1-\mathcal{A}^2\|_{L^{\infty}(\Omega)}<\frac{2\pi}{T},\\
    \norm{\Lambda_{\mathcal{A}^1,\Phi_1}-\Lambda_{\mathcal{A}^2,\Phi_2}}_{D^{3/2}((\partial\Omega)_T)\to L^2((\partial\Omega)_T)}\leq 1.
    \end{gather}
     Here, the coefficients $\mathfrak{a},\mathfrak{A},\mathfrak{c},\mathfrak{C},\mathfrak{D}$ and the H\"older exponent $\eta\in (0,1)$ are given by 
     \begin{gather}
      \mathfrak{a}\vcentcolon =\max(s_0,s_1),\,\mathfrak{A}\vcentcolon =\max(s_0,s_1,s_2),\\
         \mathfrak{c}\vcentcolon = \min(\mathfrak{b}_0,\mathfrak{b}_1), \, \mathfrak{C}\vcentcolon = \min(\mathfrak{b}_0,\mathfrak{b}_1,\mathfrak{b}_2),\\
          \mathfrak{D}\vcentcolon = \Theta(\mathfrak{b}_0)\Theta(\mathfrak{b}_1)\Theta(\mathfrak{b}_2)
     \end{gather}
     and
    \begin{equation}
        \eta\vcentcolon =\frac{\mathfrak{C}^4\gamma^2}{(1+\mathfrak{A}+\mathfrak{C})^2(\gamma+\beta)^2}\in (0,1),
    \end{equation}
    where $\Theta(\rho)=1-\rho$ and $(\gamma,\beta)$ is any pair of positive numbers satisfying $0<\gamma<1/11$ and $\beta\geq 3+10\gamma$.
\end{theorem}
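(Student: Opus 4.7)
\medskip
\noindent\textbf{Proof plan.}

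The overall strategy is a geometric-optics-plus-Radon-transform scheme in the spirit of Bellassoued--Dos Santos Ferreira for wave operators with first-order terms, adapted to the infinite waveguide geometry and enhanced with a careful bookkeeping of Sobolev-interpolation constants. The plan is to first derive an integral identity relating the difference of DN maps to an integral of the differences of potentials against pairs of solutions of the two equations, then to plug in suitable GO solutions whose phase and amplitude are chosen so that the integral identity reduces, in the leading order, to the light-ray / Radon transform of the linear combination $(\mathcal{A}^1-\mathcal{A}^2, \Phi_1-\Phi_2)$ along a family of null lines crossing $\Omega$. Since the potentials are time-independent, integrating in $t$ lets me convert a time-integrated light-ray transform into a spatial Radon transform in the $\omega$-section, which is the crucial step that makes the time-independence hypothesis work.

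First I would build two families of GO solutions $u_j(X,t)=e^{i\sigma\varphi(X,t)}(a_j(X,t)+r_j(X,t;\sigma))$ of the direct and adjoint problems, with linear phase $\varphi(X,t)=\xi\cdot X-|\xi|t$ for $\xi\in\mathbb{R}^3$, and amplitudes $a_j$ solving first-order transport equations along the bicharacteristic flow. Because $\Omega$ is unbounded in the $y$-direction, the GO construction has to be done with cut-offs that do not destroy the Radon-transform identity; I would use the Fourier transform in $y$ to decouple this direction and work section-wise in $\omega$. The remainders $r_j(\cdot;\sigma)$ are estimated by the usual energy method, giving $\mathcal{O}(\sigma^{-1})$ decay. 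Plugging these into the integral identity \eqref{eq: continuity IBVP}-type pairing and letting $\sigma\to\infty$, with the GO boundary data extended suitably, produces a bound of the form
\begin{equation*}
    \bigl|\mathcal{R}\bigl[\xi\cdot(\mathcal{A}^1-\mathcal{A}^2)+\tfrac{1}{2i\sigma}(\Phi_1-\Phi_2)+\ldots\bigr](\omega_\xi,p)\bigr|\lesssim C_{GO}(\sigma,\xi)\,\|\Lambda_{\mathcal{A}^1,\Phi_1}-\Lambda_{\mathcal{A}^2,\Phi_2}\|+\sigma^{-1}
\end{equation*}
for the Radon transform $\mathcal{R}$ in $\omega$, uniformly in the $y$-fibre. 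The gauge invariance of the problem under $A\mapsto A+\nabla\chi$, $A_0\mapsto A_0-\partial_t\chi$ forces one to only recover the $\nabla\wedge(A^1-A^2)$ part of the magnetic potential; this is compatible with the hypothesis $\operatorname{Div}_x A^1=\operatorname{Div}_x A^2$ and the smallness assumption $\|\mathcal{A}^1-\mathcal{A}^2\|_{L^\infty(\Omega)}<2\pi/T$, which ensures that the gauge can be trivialized after the Radon transform step.

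Second, I invert the Radon transform: by standard continuity of $\mathcal{R}^{-1}$ between the appropriate Sobolev scales, the pointwise Radon-transform bound translates into an $L^\infty(\mathbb{R}_y;H^{\bar{s}_j}(\mathbb{R}^2))$ bound on the potential differences, at the price of paying a polynomial weight in $|\xi|$. Balancing $\sigma$ and $|\xi|$ against the a priori bound $R_2$ in $H^{s_j}$ via the classical interpolation
\begin{equation*}
    \|f\|_{H^{\bar{s}}}\le \|f\|_{H^{\bar s-\theta}}^{1-\theta/(s-\bar s+\theta)}\|f\|_{H^s}^{\theta/(s-\bar s+\theta)}
\end{equation*}
and tracking how the interpolation constants behave as $\bar{s}_j\uparrow s_j$ gives the Hölder exponent $\eta$ and the explicit prefactors in $\mathfrak{a},\mathfrak{A},\mathfrak{c},\mathfrak{C},\mathfrak{D}$. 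The factor $\Theta(\rho)=1-\rho$ appears precisely from the Sobolev embedding constants that blow up as $\mathfrak{b}_j\uparrow 1$, and the powers of $(1+\mathfrak{A})/\mathfrak{C}$ come from the iterated interpolation needed to handle the three potentials simultaneously with different Sobolev indices.

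The main obstacle I expect is the careful tracking of all constants so that the final estimate has the stated explicit dependence on the Sobolev exponents. In particular, the powers $(1+\mathfrak{A})/\mathfrak{C}$ and the combinatorial exponent $\frac{\gamma+\beta}{\gamma}+\frac{2\mathfrak{c}}{1+\mathfrak{a}+\mathfrak{c}}+\frac{\mathfrak{b}_2}{1+\bar s_2+\mathfrak{b}_2}$ strongly suggest that the proof proceeds in two sub-steps: one Hölder stability estimate at the level of the vector potential $\mathcal{A}$ (producing the factor with exponent $\tfrac{2\mathfrak{c}}{1+\mathfrak{a}+\mathfrak{c}}$), which is then reinserted into a second estimate for $\Phi$ (producing the $\mathfrak{b}_2$-term). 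The choice $0<\gamma<1/11$ and $\beta\ge 3+10\gamma$ is the range in which the Radon transform inversion combined with the GO remainder estimate and the interpolation exponents remain compatible; identifying this feasibility region, and choosing $\sigma=\sigma(\|\Lambda_1-\Lambda_2\|)$ optimally within it, is where most of the quantitative work will sit. The infinite direction $y$ only requires a Plancherel-based decomposition and does not fundamentally alter the Hölder exponent, but it does necessitate that every estimate be uniform in the fibre parameter, which is why the final bound is stated in $L^\infty(\mathbb{R}_y;H^{\bar s}(\mathbb{R}^2))$-norms.
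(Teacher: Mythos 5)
Your plan captures the right overall shape — GO solutions, an integral identity, light-ray/Radon transform, frequency localization plus interpolation, then optimization in the GO parameter, with a two-step structure (first $\mathcal{A}$, then $\Phi$) — and that is indeed the architecture of the paper's argument. However, there is a genuine gap in the central step, and one structural difference worth flagging.

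The gap: you state that plugging the GO solutions into the integral identity "produces a bound on the Radon transform of $\xi\cdot(\mathcal{A}^1-\mathcal{A}^2)+\tfrac{1}{2i\sigma}(\Phi_1-\Phi_2)+\ldots$." This is not what comes out. Because the adjoint/direct GO amplitudes carry the factor $A^{\pm}(X,t)=\exp\bigl(\mp\int_0^t(1,\mp\theta)\cdot\mathcal{A}(x+s\theta,y)\,ds\bigr)$, the product $u_2^{+}\overline{u_1^{-}}$ contains $A^{21,+}$, and the leading term of the integral identity is
\begin{equation}
\int_{\R^3_T}(1,-\theta)\cdot\mathcal{A}^{21}(x-t\theta,y)\,\varphi^2(x)h^2(y)\,\exp\Bigl(-\int_0^t(1,-\theta)\cdot\mathcal{A}^{21}(x-\rho\theta,y)\,d\rho\Bigr)\,dX\,dt,
\end{equation}
i.e.\ the line integral of $\mathcal{A}^{21}$ appears nonlinearly, wrapped inside an exponential weight that itself depends on $\mathcal{A}^{21}$. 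You cannot directly read off the Radon transform. The paper's key observation is that the integrand above is exactly $-\varphi^2 h^2\,\partial_t\,e^{-\int_0^t\cdots}$, so the $t$-integral telescopes to $\varphi^2 h^2\bigl[1-e^{-\int_0^T\cdots}\bigr]$; after mollification this yields a pointwise bound on $\bigl|e^{-\int_0^T(1,-\theta)\cdot\mathcal{A}^{21}(x_0-s\theta,y_0)\,ds}-1\bigr|$, and only then does one invert the entire function $a\mapsto e^a-1$ on $B_{2\pi}(0)\subset\C$ via $|a|\le\frac{M}{1-e^{-M}}|e^a-1|$ for $|a|\le M<2\pi$. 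This is precisely where the smallness hypothesis $T\|\mathcal{A}^{21}\|_{L^\infty}<2\pi$ enters — it is an invertibility condition for this entire map, not (as you suggest) a condition that "ensures the gauge can be trivialized after the Radon transform step." Your proposal has no mechanism for eliminating the nonlinear exponential weight, which is the technical heart of the argument.

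A secondary structural difference: your phase is $\xi\cdot X-|\xi|t$ with $\xi\in\R^3$ followed by a Fourier transform in $y$. The paper instead uses the two-dimensional phase $e^{\im\lambda(x\cdot\theta+t)}$ with $\theta\in\mathbb{S}^1\subset\R^2$, putting the $y$-dependence into a separable mollifier amplitude $h_\eps(y)$. This yields uniform estimates in $y$ (hence the $L^\infty(\R_y;\cdot)$ norms) directly, rather than by decoupling in frequency, and it makes the final estimate reduce to a genuine two-dimensional Radon/Fourier-slice estimate over $\omega$. Relatedly, the paper never literally "inverts $\mathcal{R}$": it passes to the Fourier transform in $x$ (projection slice), then does a low/high frequency split on a ball $B_r$ and optimizes over $r$ and $\lambda$ using the a priori $H^{s_j}$ bounds. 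Your "standard continuity of $\mathcal{R}^{-1}$ between Sobolev scales" would give a qualitatively similar result but would make the explicit constants $(1+\mathfrak{A})/\mathfrak{C}$, $4^{\mathfrak{A}}$, and $\Theta(\mathfrak{b}_j)$ substantially harder to track — and tracking those is precisely the stated novelty of the theorem. Finally, note that the proof of the theorem statement itself is very short: it simply combines the separate stability lemmas for $\mathcal{A}$ and $\Phi$, bounds the two Hölder exponents $\mu$ and $\zeta$ from below by the common quantity $\eta$, and uses $\|\Lambda_{\mathcal{A}^1,\Phi_1}-\Lambda_{\mathcal{A}^2,\Phi_2}\|_*\le 1$ to pass to the smaller exponent. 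All the substance lives in the preceding lemmas, which is where your plan must resolve the exponential-weight issue above.
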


The proof of this result is presented in Section \ref{subsec: proof main result}, which relies on a separate stability estimate for the vector potential $\mathcal{A}$ and external potential $\Phi$. These estimates can be found below in Lemma \ref{lemma: integral estimate of A} and \ref{lemma: integral estimate of Phi}, which do not require the restriction 
\[
    \norm{\Lambda_{\mathcal{A}^1,\Phi_1}-\Lambda_{\mathcal{A}^2,\Phi_2}}_{D^{3/2}((\partial\Omega)_T)\to L^2((\partial\Omega)_T)}\leq 1
\]
and thus for sharper constants and exponents, we refer the interested reader to these results. These proofs in turn depend on the construction of geometric optics solutions in Section \ref{sec: geometric optics sol} and for these it is essential that the PDE has the form \eqref{eq: relativistic schroedinger}. In fact, a striking property of \eqref{eq: relativistic schroedinger} is that the dispersion relation $\omega(k)$ in the 1-dimensional constant coefficient case without the terms $\partial_y^2$ and $\Phi$ is given by
\[
    \omega(k)=\im A_0\pm |k+A|
\]
and hence, it is complex-valued for $A_0\neq 0$. If $A_0$ were imaginary, then it would lead to a real-valued dispersion relation and thus only has an oscillatory part. In contrast, in our situation it admits solutions of the form
\[
    e^{\im kx\pm\im |k+A|t}e^{-A_0 t},
\]
which have, depending on the sign of $A_0$, an exponentially decaying or increasing contribution but also an oscillatory factor. Indeed, it is a common feature of non time-reversible PDEs, like the heat equation, to have complex-valued dispersion relations and the term $A_0$ exactly breaks down this symmetry. Finally, let us observe that another difference between the relativistic wave equation \eqref{eq: relativistic schroedinger} and the Klein--Gordon equation \eqref{eq: Klein Gordon eq} or its generalization  \eqref{eq: modified Klein Gordon} is that the first one naturally incorporates a real-valued damping term $2A_0\partial_t$ and hence it describes a physical system exhibiting dissipation.

Next, let us make a few comments on the imposed regularity assumptions on the vector potential $\mathcal{A}$. On the one hand the benefit of using $C^2$ regular coefficients is that the well-posedness theory is slightly simpler than in the low regularity setting, but more importantly it allows us later to deduce that the remainder terms in the geometric optics solutions are of class $H^{2,2}(\Omega_T)$ (see Lemma \ref{intid2 2}). On the other hand, the Sobolev regularity for $\mathcal{A}$ and $\Phi$ is solely needed to deduce the stability estimates from the pointwise bounds on the Radon transforms of the potentials (see Lemma \ref{lemma: stability estimate for vector potential} and \ref{lemma: stability estimate for potential}).

Let us close this section by mentioning that on the way in proving our main result, we also show that in fact the $L^{\infty}$ norm of the vector potentials $\mathcal{A}$ can be controlled by the difference of the DN maps in a H\"older stable way (see Corollary \ref{cor: L infinity stability}).

\subsection{Comparison to the existing literature}

Next, let us review some related results in the existing literature. The question \ref{Q:Uniq} has been studied extensively in the literature, and one can outline at least three general methods for establishing this unique determination result. The first approach, stemming from the seminal works \cite{Bel87, BK92}, relies on the so-called boundary control (BC) method together with Tataru's sharp, unique continuation theorem \cite{Tat95}. This method yields recovery of time-independent potentials under very weak assumptions on the transversally anisotropoic manifold $(M, g)$ on which the partial differential equation (PDE) is considered. An alternative approach to derive uniqueness results in the time-independent case started from the seminal work \cite{BK81}, where Carleman estimates were used for the first time in the context of inverse problems. Inspired from the work of Sylvester and Uhlmann \cite{MR873380} approaches based on geometric optics solutions  have also been rather fruitful in deriving uniqueness  results for wave type operators, in particular for the wave equation with time independent potential has been studied by Rakesh and Symes \cite{MR914815} from the measurement of  Neumann to Dirichlet map and it is extended by Isakov \cite{MR1116858} to wave operators with first order perturbation in time derivative and potential for time independent coefficients. In the manifold setup, uniqueness for time-independent coefficients in hyperbolic equations has been studied by Eskin \cite{MR2235639,MR2441006}.

Next, let us discuss some works related to the the stability question \ref{Q: Stab}. The use of geometric optics solutions to derive H\"older stability estimates for wave type operators at least goes back to Sun. By using this technique, Sun has shown in \cite{MR1059582} that time independent coefficients in wave equations posed on a bounded domain can be recoverd in a H\"older stable way from the full Neumann to Dirichlet map. In \cite{MR1158175}, Isakov and Sun extended this stability result to wave operators with linear perturbations in $\partial_t u$ and $u$ (i.e. $qu+Q\partial_t u$) in two and three dimensions by using only local Neumann to Dirichlet data. In the 3 dimensional case they achieved H\"older type stability estimates whereas in 2 dimensions they ended up with a logarithmic modulus of continuity under the assumption $Q=0$. Cipolatti and Lopez considered in \cite{MR2132903} wave operators with first order perturbation in time derivative but without a potential term. By using the full DN map, they derived H\"older and Lipschitz stability estimates depending on the regularity of the coefficient. Furthermore, in the same work they investigated the finite measurement problem. Bellassoued and Benjoud studied in \cite{MR2401822} the wave equation with a magnetic potential using the full DN map and obtained H\"older type stability in the magnetic potential. Ben A\"icha and Bellassoued \cite{MR3706187} extend the previous result to less regular magnetic potential. In partial data setting, Bellassoued et al. \cite{MR2523687} have obtained logarithmic type stability estimates for wave equations with a potential. In manifold setup, the stability of the wave type operator has been studied in several works including \cite{MR1612709,MR2852371,MR4366889,MR3995367}.

Next, let us remark that for the case of time-dependent potentials, which is not considered in this work, there is an obstruction to uniqueness from the DN map in the whole domain. To the best of our knowledge, it seems that in the existing literature  different approaches have been applied to obtain an affirmative answer to the question \ref{Q:Uniq} for time-dependent potentials. All the papers below study wave equations with a time-dependent potentials and therefore let us highlight their main differences. In \cite{RS91, Sal13}, the problem is set on an infinite time horizon, Isakov \cite{Isa91} recovered the potential from an input to output map, the authors of \cite{RR91} recover uniquely the time-dependent coefficient in a suitable subset of the domain from the DN map. We also refer the interested reader to the studies \cite{Kia16a,MR3661867,MR4124641,MR4343270} in which also inverse problems for wave equations with time-dependent potentials are investigated. Stability for the time dependent case in a bounded domain has been studied, for example, in \cite{MR3540317,MR3595191,MR4013301,MR4191617}.


Most of the above articles consider inverse problems in a bounded domain $\Omega$, whereas results for unbounded domains like infinite waveguides are not that extensively studied. A prototypical example of such an inverse problem for an elliptic PDE is the one associated to the \emph{time-independent Schr\"odinger equation}
\begin{equation}
\label{eq: schroed eq}
    (-\Delta+\Phi)u=0\text{ in }\Omega=\omega\times \R.
\end{equation}
In the literature different classes of potentials $\Phi$ have been studied. In the works \cite{CKS17,CKS18} the authors studied the uniqueness and stability question for (real-valued) potentials $\Phi\in L^{\infty}(\Omega)$, which are 1-periodic in the $y$ direction. In \cite{Kia20b}, the case of non-compactly supported potentials but with an $L^1-$integrability condition on the difference $\Phi_1-\Phi_2$ has been investigated and a unique determination result has been established. A stability result related to this model has been considered in \cite{Sou21} under suitable boundedness assumptions and the condition that the potentials a priori coincide on $\partial\Omega$. In \cite{Kia20a} this has been generalized to a simultaneous determination result, where the Laplacian in \eqref{eq: schroed eq} is replaced by the magnetic Laplacian and $\Phi$ is allowed to be complex-valued. This required an $L^2-$integrability condition on the difference of the potentials and an $L^1-$integrability condition on the difference of the magnetic potentials. More precisely, it is shown that in this case one has $dA^1=dA^2$ and $\Phi_1=\Phi_2$ in $\Omega$, where $dA^j$ denotes the exterior derivative of $A^j$. Stability estimates for $dA$ and $\Phi$ has then be obtained in \cite{KS21} under various a priori information.

Some of these results have been generalized to the \emph{time-dependent Schr\"odinger equation}
\begin{equation}
\label{eq: time-dep schroed eq}
    (i\partial_t+\Delta+\Phi)u=0\text{ in }\Omega=\omega\times \R.
\end{equation}
For example, in \cite{CKS15} the authors considered the case of periodic potentials. Another type of inverse problem related to \eqref{eq: time-dep schroed eq} is studied in the articles \cite{KPS14,KPS15}, where the authors assume that $\Phi$ in \eqref{eq: time-dep schroed eq} is known outside a compact set of $\Omega$ and they want to determine $\Phi$ in the remaining part. They also establish a Lipschitz stability estimate. In \cite{BKS18} a magnetic version of \eqref{eq: time-dep schroed eq} is studied on an infinite waveguide in $\R^3$ and they establish unique determination results for $(dA,\Phi)$ modulo the natural gauge as well as a H\"older stability estimate for the vector potential.
For a similar type of inverse problem on an infinite waveguide, we refer the interested reader to \cite{BKS16}, where a time-independent potential is recovered from the initial to Neumann data map instead of the DN map.

Finally, let us make a remark on inverse problems for wave equations on infinite waveguides. In the article \cite{Kia14} the stability of the inverse problem related to \eqref{eq: relativistic schroedinger} with $\mathcal{A}=0$, but otherwise the same setting, has been studied. The author established a local H\"older stability estimate for $\alpha$ H\"older continuous time-independent potentials $\Phi$. More precisely, it is shown that if $\|\Phi_j\|_{C^{0,\alpha}(\overline{\Omega})}\leq R$ for some $R>0$, $0<\alpha<1$ and $T>\text{diam}(\omega)$, then there exists $C=C(R,T,\Omega)>0$ such that
   \begin{equation}
   \label{eq: Holder estimate Kian}
       \|\Phi_1-\Phi_2\|_{L^{\infty}(\Omega)}\leq C\|\Lambda_{1}-\Lambda_2\|_{D^{3/2}((\partial\Omega)_T)\to L^2((\partial\Omega)_T)}^{\beta},
   \end{equation}
   where $\Lambda_j=\Lambda_{0,\Phi_j}$ for $j=1,2$, $\|\cdot\|_{D^{3/2}((\partial\Omega)_T)\to L^2((\partial\Omega)_T)}$ denotes the operator norm and
   \begin{equation}
       \beta=\frac{\min(2\alpha,1)\alpha}{3(2\alpha+2)(\min(4\alpha,2)+21)}.
   \end{equation}
Beside the article \cite{Kia14}, dealing with \eqref{eq: relativistic schroedinger} in the case $\mathcal{A}=0$, there are at least two other articles studying an inverse problem for a wave equation on an infinite waveguide, namely \cite{cristofol2015determining,MR4835772}. In \cite{cristofol2015determining}, the authors recover the uniformly elliptic coefficient $\gamma\colon \Omega\to\R$ in the wave equation 
\begin{equation}
\label{eq: wave eq on unbounded dom}
    \begin{cases}
        \partial_t^2u-\Div(\gamma\nabla u)=0&\text{ in }\Omega_T,\\
        u=0&\text{ on }(\partial\Omega)_T,\\
        u(0)=u_0,\,\partial_t u(0)=u_1&\text{ in }\Omega,
    \end{cases}
\end{equation}
where $\Omega=\omega\times \R$ is an infinite waveguide. More concretely, they wish to recover $\gamma$ in $\Omega_{\ell} = \omega \times (-\ell,\ell)$ ($\ell>0$ fixed) in a stable way and not on the whole waveguide. Using Carleman estimates for waveguides and linearization techniques, the authors establish H\"older stability estimates. The authors of \cite{kumar2024stable} studied the stable inversion of time-dependent matrix potentials for the wave equation in an infinite waveguide from the input to output map. Their strategy also relies on the use of geometrics optics solutions and a suitable integral identity leading to the light ray transform. Combining this with Vesella's analytic continuation arguments they demonstrated logarithmic type stability estimates. 

\subsection{Organization of the article} The rest of this article is organized as follows. After collecting in Section \ref{preliminaries} some background material and introducing our main notation, we establish in Section \ref{sec: Existence} the unique solvability of the relativistic wave equation \eqref{maineqn}. Afterwards, in Section \ref{sec: geometric optics sol} we prove the existence of geometric optics solutions for \eqref{maineqn} and its formal adjoint. With this preparation at our disposal, we can tackle in Section \ref{sec: Stability estimate} the inverse problem. First, we achieve in Section \ref{subsec: pointwise estimate radon A21} a pointwise estimate for the Radon transform of the vector potential $\mathcal{A}^1-\mathcal{A}^2$. Using this estimate, we prove in Section \ref{subsec: stability for A}  and \ref{subsec: stability of phi} the stability estimates for the vector potential $\mathcal{A}$ and external potential $\Phi$. Finally, in Section \ref{subsec: proof main result} we present the proof of Theorem \ref{main theorem}.

\section{Preliminaries}
\label{preliminaries}

In this section we recall some background material and take this occasion to introduce the main notation used in this work.

\subsection{Function spaces}

Throughout this article, for any open set $\Omega\subset\R^n$, $1\leq p\leq \infty$ and $k\in\N_0\cup\{\infty\}$, we denote by $L^p(\Omega)$ and $W^{k,p}(\Omega)$ the usual (complex) Lebesgue and Sobolev spaces, which carry the norms
\begin{equation}
\label{eq: Lebesgue and Sobolev norms}
    \|u\|_{L^p(\Omega)}=\left(\int_\Omega |u|^p\,dx\right)^{1/p}\,\text{ and }\,\|u\|_{W^{k,p}(\Omega)}=\left(\sum_{|\alpha|\leq k}\|\partial^{\alpha}u\|_{L^p(\Omega)}^p\right)^{1/p}
\end{equation}
(with the usual modifications in the case $p=\infty$). 
Here, $|z|^2=\bar{z}z$ and we write $\bar{z}$ for the complex conjugate of $z\in\C$. In particular, $H^k(\Omega)\vcentcolon = W^{k,2}(\Omega)$, $k\in\N_0$, is a Hilbert space and its norm is induced by the inner product
\begin{equation}
\label{eq: L2 inner product}
    \langle u,v\rangle_{H^k(\Omega)}=\sum_{|\alpha|\leq k}\int_{\Omega}\partial^{\alpha} u\overline{\partial^{\alpha}v}\,dx.
\end{equation}
As usual, we set 
\[
    W^{k,p}_0(\Omega)\vcentcolon = \|\cdot\|_{W^{k,p}(\Omega)}-\text{clos}(C_c^{\infty}(\Omega))\text{ and }H^k_0(\Omega)\vcentcolon = W^{k,2}_0(\Omega)
\]
for all $k\in\N$ and $1\leq p<\infty$. Similarly, if $X$ is a generic Banach space, $-\infty\leq a<b\leq \infty$ and $1\leq p\leq \infty$, then $L^p(a,b;X)$ stands for the space of $p$-summable functions with values in $X$ and $W^{k,p}(a,b;X)$ the space of distributions on $(a,b)$ with values in $X$ such that all derivatives up to order $k$ belong to $L^p(a,b;X)$. In the case $H^k(a,b;X)\vcentcolon = W^{k,2}(a,b;X)$, these are again Hilbert spaces when $X$ is a Hilbert space. These spaces are normed in exactly the same way as in the previous case $L^p(
\Omega)$ and $W^{k,p}(\Omega)$. In particular, one has $L^2(a,b;L^2(\Omega))=L^2(\Omega\times (a,b))$. It is a well-known fact that if $\omega\subset\R^n$, $-\infty\leq a<b\leq \infty$ and $k\in\N_0$, then one has
\begin{equation}
\label{eq: Sobolev spaces on wave guides}
    H^k(\omega\times (a,b))=\{u\,;\partial_y^{\ell} u \in L^2(a,b;H^{k-\ell}(\omega))\text{ for }0\leq\ell\leq k\}
\end{equation}
and
\begin{equation}
    \label{eq: Sobolev space for infinite waveguide}\|u\|^2_{H^k(\omega\times(a,b))}=\sum_{\ell=0}^k\|\partial_y^{\ell}u\|^2_{L^2(a,b;H^{k-\ell}(\omega))},
\end{equation}
where points in $\omega\times (a,b)$ are denoted by $X=(x,y)$. We also make use of the spaces $H^s(\R^n)$ for $s\in\R$. If we denote by $\tempered(\R^n)$ the space of tempered distributions and $\langle D\rangle^s$ the Bessel potential operator of order $s$, whose Fourier symbol is $\langle \xi\rangle^s$ with
\[
    \langle\xi\rangle=(1+|\xi|^2)^{1/2},
\]
then we may introduce the Hilbert spaces
\begin{equation}
\label{eq: fractiona Sobolev}
    H^{s}(\mathbb{R}^n)= \left\{ u \in \tempered(\mathbb{R}^n)\,;\, \|u\|_{H^s(\R^n)}\vcentcolon = \|\langle D\rangle^s u\|_{L^2(\R^n)}<\infty\right\}.
\end{equation}
These spaces are commonly called \emph{fractional Sobolev spaces} and coincide with $H^k(\R^n)$ for $k\in\Z$. If $\Omega\subset\R^n$ is generic open set, then $H^s(\Omega)$ is the space of restrictions of distributions in $H^s(\R^n)$ to $\Omega$. Alternatively, if $\Omega=\R^n$ or $\Omega\Subset\R^n$ is a regular domain, then it can be characterized as the interpolation space
\begin{equation}
\label{eq: def fractional sobolev via interpolation}
    H^s(\Omega)=[H^k(\Omega),L^2(\Omega)]_{\theta}
\end{equation}
with equivalent norms, where $s=(1-\theta)k$ for some $k\in\N$ and $0<\theta<1$, or $[H^{k+1}(\Omega),H^k(\Omega)]_{1-\theta}$ when $s=k+\theta$, $0<\theta<1$ and $k\in\N$. Via a covering of $\partial\Omega$ by charts $(\mathscr{O}_j,\varphi_j)_{j=1,\ldots,N}$ and an associated partition of unity $(\alpha_j)_{j=1,\ldots,N}$, we may define the Hilbert space
\begin{equation}
\label{eq: fractional sobolev for infinite waveguide}
    H^s(\partial\Omega)=\{u\in \distr(\partial\Omega)\,;\,\|u\|^2_{H^s(\partial\Omega)}\vcentcolon =\sum_{j=1}^N\|\varphi_j^*(\alpha_j u)\|^2_{H^s(\R^{n-1})}<\infty\},
\end{equation}
which is independent of the used charts or the partition of unity. In all of the above cases, we have the following interpolation result
\begin{equation}
\label{eq: interpolation}
    [H^{s_2}(X),H^{s_1}(X)]_{\theta}=H^{(1-\theta)s_2+\theta s_1}(X)
\end{equation}
for all $0\leq s_1<s_2<\infty$ and $0<\theta<1$ (with equivalent norms), where $X=\R^n$, $X=\Omega$ or $X=\partial\Omega$ when $\Omega\Subset\R^n$ is a regular domain. The same result holds for negative exponents $s_j$ as long as these numbers and their convex combination $(1-\theta)s_2+\theta s_1$ are not half integers. Motivated by \eqref{eq: Sobolev spaces on wave guides}, we may define
\begin{equation}
H^s(\partial\Omega)= H^s(\R_{y};L^2(\partial\omega))\cap L^2 (\R_{y};H^s(\partial\omega)),
\end{equation}
where $\Omega=\omega\times\R$ with $\omega\Subset\R^n$ being a regular domain. Here, the first space can be defined by Fourier transform (similarly as we did for $H^s(\R^n)$), which is a unitary isomorphism on $L^2(\R;H)$ when $H$ is a Hilbert space. 

More generally, we may define the \emph{space-time Sobolev spaces}
\begin{equation}
\label{eq: more general sobolev}
    H^{r,s}(X\times (0,T))\vcentcolon = H^r(0,T;L^2(\Omega))\cap L^2(0,T;H^s(\Omega))
\end{equation}
for $X=\Omega$ or $X=\partial\Omega$, with $\Omega$ being the whole space, a bound regular domain $\Omega\Subset\R^n$ or an infinite waveguide $\Omega=\omega\times\R$ as above, and all $r,s\geq 0$. These spaces are again Hilbert spaces under the norm 
\begin{equation}
    \|u\|_{H^{r,s}(\Omega_T)}^2=\|u\|_{H^r(0,T;L^2(\Omega))}^2+\|u\|_{L^2(0,T;H^s(\Omega))}^2.
\end{equation}

\section{Well-posedness of the relativistic wave equation}
\label{sec: Existence}

In this section we will show the well-posedness of the Dirichlet problem \eqref{maineqn} in suitable function spaces. For later convenience, let us denote the operator in \eqref{eq: relativistic schroedinger} by
\begin{equation}
\label{eq: op L AP}
     L_{\mathcal{A},\Phi}=(\partial_t +A_0)^2-(\nabla_x + \im A)^2-\partial_y^2+\Phi,
\end{equation}
which can be rewritten as
\begin{equation}
\label{eq: equivalent form L AP}
    L_{\mathcal{A},\Phi}=\partial_t^2-\Delta-(2\im A\cdot \nabla_x + \im \Div_x A-|A|^2-|A_0|^2-\Phi)+2A_0\partial_t,
\end{equation}
whenever the functions $A_0$ and $A$ are sufficiently regular. We have the following well-posedness result. 

\begin{theorem}\label{thm: Existence}
 Let $\Omega=\omega\times\R$ be an infinite waveguide with $\omega\subset\R^2$ being a smoothly bounded domain and $T>0$. Assume that the vector potential  $\mathcal{A}=(A_0,\im A)\in L^{\infty}(\Omega)$ with $\Div_x A\in L^{\infty}(\Omega)$, $A_\nu=\nu\cdot A=0$ on $\partial\Omega$ and $\Phi\in L^{\infty}(\Omega)$. For any boundary data $f \in  D^{3/2}((\partial\Omega)_T) $, there exists a unique solution to 
 \begin{equation}\label{maineqn well-posedness}
    \begin{cases}
     L_{\mathcal{A},\Phi}u = 0 & \text{in } \Omega_T,\\
        u = f & \text{on } (\partial\Omega)_T,\\
          u(0) = 0,\, \partial_{t}u(0) = 0 & \text{in } \Omega
    \end{cases}
\end{equation}
 such that 
 \[
    u\in C ([0,T];  H^{1}(\Omega))\cap C^1 ([0,T]; L^{2}(\Omega)).
 \]
 Furthermore, we have
 $\partial_{\nu}u \in L^{2}((\partial\Omega)_T)$ and there holds
 \begin{equation}
 \label{eq: full estimate well-posedness thm}
\|\partial_\nu u\|_{L^{2}((\partial\Omega)_T)} +\|u\|_{L^{\infty}(0,T;H^1(\Omega))}
+\|\partial_t u\|_{ L^{\infty}(0,T;L^2(\Omega))}
\leq C_0 \|f\|_{D^{3/2}((\partial\Omega)_T)}
\end{equation}
 for some constant $C_0=C_0(\Omega,T,\|\mathcal{A}\|_{L^{\infty}(\Omega)},\|\Div_x A\|_{L^{\infty}(\Omega)},\|\Phi\|_{L^{\infty}(\Omega)})>0$ depending non-decreasingly on the involved norms.
\end{theorem}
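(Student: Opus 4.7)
The plan is to follow Lions' classical three-step strategy for hyperbolic equations with inhomogeneous Dirichlet data, adapted to the infinite-waveguide setting: (i) lift the boundary data, (ii) solve the resulting source problem with zero data via a Galerkin scheme, and (iii) deduce the hidden regularity $\partial_\nu u\in L^2((\partial\Omega)_T)$ by a multiplier identity. First I would construct an extension $F$ of $f$ satisfying $F\in H^{2,2}(\Omega_T)$, $F|_{(\partial\Omega)_T}=f$, $F(\cdot,0)=0$, $\partial_t F(\cdot,0)=0$, together with $\|F\|_{H^{2,2}(\Omega_T)}\leq C\|f\|_{D^{3/2}((\partial\Omega)_T)}$. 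The weighted $L^2$ conditions on $\partial_t f$, $\partial_\tau f$, $\partial_y f$ in the definition of $D^{3/2}$ are precisely the Hardy-type conditions characterizing the Lions--Magenes space $H^{1/2}_{00}$ in time, so that zero extensions of these derivatives to $t<0$ remain in $H^{1/2}(\R)$; this is exactly what is needed to produce a lift into $H^{2,2}(\Omega_T)$ with the two compatibility conditions at $t=0$. The extension itself is built chart-by-chart near $\partial\omega$ by half-space Poisson extensions in the normal direction combined with a partition of unity, in analogy with \cite{Kia14}.

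Setting $u=v+F$ reduces the problem to the homogeneous Dirichlet IBVP
\begin{equation*}
    \begin{cases}
        L_{\mathcal{A},\Phi}v=-L_{\mathcal{A},\Phi}F=:g & \text{in }\Omega_T,\\
        v=0 & \text{on }(\partial\Omega)_T,\\
        v(0)=\partial_t v(0)=0 & \text{in }\Omega,
    \end{cases}
\end{equation*}
where $g\in L^2(\Omega_T)$ with $\|g\|_{L^2(\Omega_T)}\leq C\|F\|_{H^{2,2}(\Omega_T)}$ and $C$ depends monotonically on $\|\mathcal{A}\|_{L^\infty(\Omega)}$, $\|\Div_x A\|_{L^\infty(\Omega)}$, $\|\Phi\|_{L^\infty(\Omega)}$. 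Existence of $v$ together with the energy estimate
\begin{equation*}
    \|v\|_{L^\infty(0,T;H^1_0(\Omega))}+\|\partial_t v\|_{L^\infty(0,T;L^2(\Omega))}\leq C\|g\|_{L^2(\Omega_T)}
\end{equation*}
are then obtained by a Galerkin approximation in the orthonormal basis $\{e_k(x)e^{\im y\xi}\}$, where $(e_k)$ are the Dirichlet eigenfunctions of $-\Delta$ on $\omega$ and $\xi$ is discretized via Fourier analysis in $y$. Testing with $\partial_t v_N$ generates $\tfrac12\tfrac{d}{dt}(\|\partial_t v_N\|_{L^2}^2+\|\nabla v_N\|_{L^2}^2)$ from the principal part, while the lower-order contributions (including the damping term $2A_0\partial_t v_N$) are absorbed by Grönwall's inequality; passing to the limit $N\to\infty$ yields existence and uniqueness, and the continuity in $t$ follows from the standard strong--weak continuity argument.

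For the hidden regularity I would apply the Lions multiplier method. Let $\tilde h\in C^1(\overline\omega;\R^2)$ be any smooth extension of the outward unit normal to $\partial\omega$ and set $h(x,y):=(\tilde h(x),0)$. Multiplying the equation for $v$ by $h\cdot\nabla v$ and integrating over $\Omega_T$ produces a boundary integral $\tfrac12\int_{(\partial\Omega)_T}|\partial_\nu v|^2\,dSdt$ from the principal wave operator, while the first-order contributions from $2\im A\cdot\nabla_x v$ generate boundary terms that vanish by the gauge condition $A_\nu=0$ combined with $\nabla v|_{(\partial\Omega)_T}=(\partial_\nu v)\nu$, and all remaining volume terms are bounded by the energy of $v$. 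This yields $\partial_\nu v\in L^2((\partial\Omega)_T)$ with the appropriate bound, and combining with $\partial_\nu F\in L^2((\partial\Omega)_T)$ (from trace theory applied to $F\in H^{2,2}(\Omega_T)$) gives the full estimate \eqref{eq: full estimate well-posedness thm}. Uniqueness for the inhomogeneous problem follows by applying the energy estimate to the difference of two solutions.

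The main technical obstacle is the non-compactness of $\Omega$ in the $y$-direction: the integration-by-parts identities underlying both the Galerkin energy estimate and the multiplier identity must be justified globally in $y$. This is resolved by first proving all identities on truncated cylinders $\omega\times(-N,N)$ with temporary cut-offs $\chi_N(y)$, and then passing to the limit $N\to\infty$, exploiting the finite speed of propagation of $L_{\mathcal{A},\Phi}$ (inherited from $\partial_t^2-\Delta$) to ensure that contributions from $|y|=N$ vanish for $t\in[0,T]$ once $f$ is assumed compactly supported in $y$; a density argument in $D^{3/2}((\partial\Omega)_T)$ then extends all bounds to general boundary data.
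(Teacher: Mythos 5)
Your overall three-step scaffold --- lift the boundary data into $H^{2,2}(\Omega_T)$, solve the resulting homogeneous Dirichlet problem with interior source, then recover $\partial_\nu u\in L^2((\partial\Omega)_T)$ by a multiplier argument --- coincides exactly with the paper's structure, and your lifting step is precisely Lemma~\ref{w_{f}.}, which the paper also imports from \cite{Kia14}. The substantive divergence is in the solvability of the source problem. You propose a hands-on Galerkin scheme, which forces you to confront the unboundedness of $\Omega$ in the $y$-direction (truncation, finite speed of propagation, density of compactly supported data in $D^{3/2}((\partial\Omega)_T)$). The paper avoids all of this by invoking the abstract framework of \cite[Chapter XVIII, \S 5]{DautrayLionsVol5}: it only needs the Gelfand triple $H^1_0(\Omega)\hookrightarrow L^2(\Omega)\hookrightarrow H^{-1}(\Omega)$ together with mapping properties of the perturbation $\mathscr{A}_1$ and the dissipative term $\mathscr{B}u=2A_0u$, none of which see whether $\Omega$ is bounded. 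This also matters for uniqueness, where your sketch is a genuine soft spot: you claim uniqueness ``by applying the energy estimate to the difference of two solutions,'' but a weak solution in $C([0,T];H^1_0(\Omega))\cap C^1([0,T];L^2(\Omega))$ does not a priori admit $\partial_t v$ as a test function, so the energy estimate is not available for an arbitrary weak solution. The paper handles this by explicitly checking $\mathscr{A}_1\in L(H^1_0(\Omega),L^2(\Omega))\cap L(L^2(\Omega),H^{-1}(\Omega))$ before citing \cite[Chapter XVIII, \S 5, Theorem~3]{DautrayLionsVol5}, which internalizes the Ladyzhenskaya-type argument (testing with $\int_t^s v$); if you retain the Galerkin route you would need to supply this separately. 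Finally, for hidden regularity the paper folds all lower-order terms into a source $F\in L^2(\Omega_T)$ so that $v$ solves $(\partial_t^2-\Delta)v=F$ and cites the multiplier estimate of \cite[Proof of Theorem~4.1]{Kia14}, whereas you keep $2\im A\cdot\nabla_x v$ inside the multiplier identity and appeal to the gauge condition $A_\nu=0$; both versions yield the same estimate, but the paper's variant requires no assumption on $A_\nu$ at this step.
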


To establish this well-posedness result, we will make use of the following lifting lemma for Sobolev functions.

\begin{lemma}[{\cite[Theorem~4.2]{Kia14}}]\label{w_{f}.}
    Let $\Omega=\omega\times\R$ be an infinite waveguide with $\omega\subset\R^2$ being a smoothly bounded domain and $T>0$. For all $f \in D^{3/2}((\partial\Omega)_T)$, there exists $w_{f} \in H^{2,2}(\Omega_T)$ satisfying 
    \begin{equation}
        \begin{cases}
            w_f=f,\,\partial_{\nu}w_f=0 &\text{on } (\partial\Omega)_T,\\
             w_f (0)=\partial_t w_f(0)=0 &\text{in }\Omega
        \end{cases}
    \end{equation}
    and 
    \begin{equation*}
        \|w_{f}\|_{H^{2,2}(\Omega_T)}\leq C \|f\|_{D^{3/2}((\partial\Omega)_T)}
    \end{equation*}
    for some $C>0$ independent of $f$.
\end{lemma}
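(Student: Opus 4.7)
My plan is to construct $w_f$ via three steps: extending $f$ by zero across $t = 0$ to an enlarged time cylinder, applying a classical Lions--Magenes-type trace lifting, and restricting back to $\Omega_T$. The central observation is that the weighted $L^2$ integrability conditions built into the norm of $D^{3/2}((\partial\Omega)_T)$ are Hardy-type conditions designed precisely so that the zero extension of $f$ to negative times lies in $H^{3/2,3/2}$ of the enlarged cylinder.

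For the first step I would define $\tilde f$ on $(\partial\Omega) \times (-1, T)$ by $\tilde f = f$ on $(\partial\Omega)_T$ and $\tilde f = 0$ on $(\partial\Omega) \times (-1, 0]$. The condition $f|_{t=0}=0$ together with $\partial_t f \in L^2((\partial\Omega)_T; t^{-1}\,dS\,dt)$ is exactly the Lions--Magenes criterion (via Hardy's inequality) for the zero extension of $f$, viewed as an $L^2(\partial\Omega)$-valued function of $t$, to lie in $H^{3/2}(-1,T; L^2(\partial\Omega))$. The analogous weighted conditions on $\partial_\tau f$ and $\partial_y f$ provide the control of the tangential derivatives needed for the subsequent lifting. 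Since $f \in L^2(0,T; H^{3/2}(\partial\Omega))$ extends trivially by zero in the spatial norm, combining these yields $\tilde f \in H^{3/2,3/2}((\partial\Omega)\times(-1,T))$ with $\|\tilde f\|_{H^{3/2,3/2}} \leq C\|f\|_{D^{3/2}}$.

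For the second step I would invoke a Lions--Magenes trace theorem producing a bounded linear right inverse $R$ to the Dirichlet-Neumann trace map from $H^{2,2}(\Omega\times(-1,T))$ onto $H^{3/2,3/2}\times H^{1/2,1/2}$. The construction is standard: a finite partition of unity on the compact curve $\partial\omega$, together with local charts straightening $\partial\omega$, reduces the problem to a half-space model on $\R^2_+ \times \R_y \times \R_t$, in which an explicit lifting can be written via tangential Fourier transform and a suitable exponentially decaying symbol in the normal variable. The unbounded waveguide direction $y$ plays the role of an additional tangential coordinate and causes no difficulty. Setting $\tilde w := R(\tilde f, 0)$ then produces an $H^{2,2}$ function on $\Omega \times (-1,T)$ with the correct Dirichlet and vanishing Neumann traces on the lateral boundary.

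The main obstacle is the third step: arranging that $\tilde w$ vanishes for $t\leq 0$, so that $w_f := \tilde w|_{\Omega_T}$ automatically satisfies $w_f(0) = \partial_t w_f(0) = 0$. I would handle this by a density argument. The subspace of $f \in D^{3/2}((\partial\Omega)_T)$ supported in $\{t \geq \varepsilon\}$ is dense as $\varepsilon \downarrow 0$ (this follows from the Hardy control at $t=0$ encoded in the $D^{3/2}$-norm), and for such compactly supported data the lift can be modified by multiplication with a temporal cutoff that equals $1$ on $\supp(\tilde f)$ and vanishes near $\{t=0\}$, at the cost of only a bounded factor in the $H^{2,2}$ norm; the boundary and Neumann traces are preserved since the cutoff is identically $1$ where $\tilde f$ is nonzero. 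Passing to the limit and restricting to $\Omega_T$ yields $w_f$ satisfying all the required properties together with the estimate $\|w_f\|_{H^{2,2}(\Omega_T)} \leq C \|f\|_{D^{3/2}((\partial\Omega)_T)}$.
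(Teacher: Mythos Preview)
The paper does not give its own proof of this lemma; it is simply quoted from \cite[Theorem~4.2]{Kia14}. So there is nothing in the paper to compare against beyond the statement itself. Your overall strategy---zero extension of $f$ across $t=0$ using the Hardy-type weighted conditions built into $D^{3/2}$, followed by a Lions--Magenes lifting of the pair $(\tilde f,0)$---is indeed the standard route and is essentially how the result is proved in \cite{Kia14}.

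That said, Step~3 as you have written it does not close. For data supported in $\{t\geq\varepsilon\}$ you propose multiplying the lift $\tilde w$ by a temporal cutoff $\chi$ that equals $1$ on $\supp\tilde f$ and vanishes near $t=0$. But the lift $\tilde w=R(\tilde f,0)$ is \emph{not} supported where $\tilde f$ is (the lifting operator is nonlocal in $t$), so the term $\|\chi''\,\tilde w\|_{L^2}$ picks up a factor of order $\varepsilon^{-2}$ with nothing to compensate it. Hence the $H^{2,2}$ bound on $\chi\tilde w$ is not uniform in $\varepsilon$, and the density argument does not produce a bounded operator $f\mapsto w_f$.

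A clean repair is to replace the cutoff by a subtraction. After lifting to $\tilde w\in H^{2,2}(\Omega\times(-1,T))$, restrict to $u_-:=\tilde w|_{\Omega\times(-1,0)}$ and extend $u_-$ back to $\Omega\times(-1,T)$ by a bounded Seeley-type extension operator $E$ acting in the $t$-variable only. Since $E$ commutes with the spatial trace and $u_-|_{\partial\Omega}=0$, $\partial_\nu u_-|_{\partial\Omega}=0$, the extension $Eu_-$ also has vanishing lateral Dirichlet and Neumann traces for all $t$. Then $w_f:=(\tilde w-Eu_-)|_{\Omega_T}$ has the correct lateral traces, vanishes identically for $t<0$, and hence satisfies $w_f(0)=\partial_t w_f(0)=0$ by the trace theorem for $H^{2,2}$. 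The bound $\|w_f\|_{H^{2,2}(\Omega_T)}\leq C\|f\|_{D^{3/2}}$ follows directly from the boundedness of the lifting and of $E$.
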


Next, we prove the above well-posedness result.

\begin{proof}[Proof of theorem \ref{thm: Existence}]
\noindent\textit{Step 1.} In the first step, we show the well-posedness of the following homogeneous problem 
\begin{equation}
\label{eq: well-posedness homogeneous}
     \begin{cases}
     L_{\mathcal{A},\Phi}u = h & \text{in } \Omega_T,\\
        u = 0 & \text{on } (\partial\Omega)_T,\\
          u(0) = 0,\, \partial_{t}u(0) = 0 & \text{on } \Omega,
    \end{cases}
\end{equation}
where $h\in L^2(\Omega_T)$ is a given function. We aim to apply the general results in \cite[Chapter XVIII, \S 5]{DautrayLionsVol5}. 

First, let us recall that on $\Omega$ we have the following Poincar\'e inequality
    \begin{equation}
    \label{eq: Poincare}
        \|u\|_{L^2(\Omega)}\lesssim \|\nabla u\|_{L^2(\Omega)}
    \end{equation}
    for all $u\in H^1_0(\Omega)$. So, we can endow $H^1_0(\Omega)$ with the equivalent norm
    \begin{equation}
    \label{eq: equivalent norm}
        \|u\|_{H^1_0(\Omega)}=\|\nabla u\|_{L^2(\Omega)}.
    \end{equation}
    The spaces
    \[
        H^1_0(\Omega)\hookrightarrow L^2(\Omega)\hookrightarrow (H^1_0(\Omega))'=H^{-1}(\Omega)
    \]
    form a Gelfand triple, i.e. the embeddings are continuous and dense. Following the notation of \cite[Chapter XVIII, \S 5]{DautrayLionsVol5} we set
    \begin{equation}
    \label{eq: sesquilinear fomr a}
        \begin{split}
            a&=a_0+a_1 \text{ with }a_0(u,v)=\langle u,v\rangle_{H^1_0(\Omega)},\\
            a_1(u,v)&=-\langle (2 \im A\cdot \nabla_x + \im \Div_x A-|A|^2-|A_0|^2-\Phi)u,v\rangle_{L^2(\Omega)}
        \end{split}
    \end{equation}
    (see \eqref{eq: equivalent form L AP}).
    Clearly, $a_0$ is continuous, hermitian and coercive. So, $a_0$ fulfills all required properties of \cite[Chapter XVIII, \S 5, Sectiom~1.2]{DautrayLionsVol5} and the related operator is occasionally denoted by $\mathscr{A}_0=-\Delta\in L(H^1_0(\Omega),H^{-1}(\Omega))$\footnote{$L(X,Y)$ stands for the space of continuous linear operators between normed spaces $X$ and $Y$.}. For $a_1$ we need to show that
    \begin{equation}
    \label{eq: continuity estimate a1}
        |a_1(u,v)|\lesssim \|u\|_{H^1_0(\Omega)}\|v\|_{L^2(\Omega)}\text{ for all }u,v\in H^1_0(\Omega).
    \end{equation}
    This implies that the operator associated to $a_1$ satisfies $\mathscr{A}_1\in L(H^1_0(\Omega),H^{-1}(\Omega))$. The continuity estimate \eqref{eq: continuity estimate a1} follows from the Poincar\'e and H\"older inequality. Indeed, we have
    \begin{equation}
    \label{eq: H1 L2 estimate}
        \begin{split}
        &\|(2 \im A\cdot \nabla_x + \im \Div_x A-|A|^2-|A_0|^2-\Phi)u\|_{L^2(\Omega)}\\
        &\lesssim \|A\|_{L^{\infty}(\Omega)}\|\nabla u\|_{L^2(\Omega)}\\
        &\quad +(\|\Div_x A\|_{L^{\infty}(\Omega)}+\|A\|_{L^{\infty}(\Omega)}^2+\|A_0\|^2_{L^{\infty}(\Omega)}+\|\Phi\|_{L^{\infty}(\Omega)})\|u\|_{L^2(\Omega)}\\
        &\lesssim (\|\Div_x A\|_{L^{\infty}(\Omega)}+(1+\|A\|_{L^{\infty}(\Omega)})^2+\|A_0\|^2_{L^{\infty}(\Omega)}+\|\Phi\|_{L^{\infty}(\Omega)})\|\nabla u\|_{L^2(\Omega)}.
    \end{split}
    \end{equation}
    For uniqueness of solutions to \eqref{eq: well-posedness homogeneous} we also need that $\mathscr{A}_1\in L(H^1_0(\Omega),L^2(\Omega)) \cap L(L^2(\Omega),H^{-1}(\Omega))$ (see \cite[Chapter XVIII, \S 5, Theorem~3]{DautrayLionsVol5}). The first assertion is a consequence of the estimate \eqref{eq: H1 L2 estimate}. The second one follows by duality, i.e.
    \[
    \begin{split}
        \langle \mathscr{A}_1 u,v\rangle_{H^{-1}(\Omega)\times H^1_0(\Omega)}&=-\langle u, 2 \im\partial_j (A_j v)\rangle_{L^2(\Omega)}\\
        &\quad+\langle u,( \im\Div_x A-|A|^2-|A_0|^2-\Phi)v\rangle_{L^2(\Omega)}.
    \end{split}
    \]
  
    Arguing as above, we see that there holds
    \[
         \begin{split}
              &|\langle \mathscr{A}_1 u,v\rangle_{H^{-1}(\Omega)\times H^1_0(\Omega)}|\\
              &\lesssim (\|\Div_x A\|_{L^{\infty}(\Omega)}+(1+\|A\|_{L^{\infty}(\Omega)})^2+\|A_0\|^2_{L^{\infty}(\Omega)}+\|\Phi\|_{L^{\infty}(\Omega)})\|u\|_{L^2(\Omega)}\|v\|_{H^1_0(\Omega)}
         \end{split}  
    \]
    for any $u\in L^2(\Omega),v\in H^1_0(\Omega)$ and so
    \[
        \|\mathscr{A}_1u\|_{H^{-1}(\Omega)}\lesssim (\|\Div_x A\|_{L^{\infty}(\Omega)}+(1+\|A\|_{L^{\infty}(\Omega)})^2+\|A_0\|^2_{L^{\infty}(\Omega)}+\|\Phi\|_{L^{\infty}(\Omega)})\|u\|_{L^2(\Omega)}.
    \]
    This yields $\mathscr{A}_1\in L(L^2(\Omega),H^{-1}(\Omega))$.
    
    In our case the sesquilinear form $\beta$ and the related operator $\mathscr{B}$ from \cite[Chapter XVIII, \S 5, Section~5.1]{DautrayLionsVol5} are given by
    \[
        \beta(u,v)=\langle \mathscr{B}u,v\rangle_{L^2(\Omega)}\text{ with }\mathscr{B}u= 2A_0 u
    \]
    and the operator $\mathscr{B}$ clearly maps $L^2(\Omega)$ to itself. As the operator $\mathscr{C}$ in \cite[Chapter XVIII, \S 5, Section~1.2]{DautrayLionsVol5}, we choose the identity map from $L^2(\Omega)$ to itself. Now, we can invoke the existence and uniqueness results \cite[Chapter XVIII, \S 5, Theorem~3 \& 4]{DautrayLionsVol5}, which give the existence of a unique solution $u\in C([0,T];H^1_0(\Omega))\cap C^1([0,T];L^2(\Omega))$ to \eqref{eq: well-posedness homogeneous}. Furthermore, by \cite[Chapter XVIII, \S 5, Lemma~7]{DautrayLionsVol5} the solution $ u$ satisfies the following energy identity
    \begin{equation}
    \label{eq: energy identity}
    \begin{split}
         &\|\partial_t u(t)\|_{L^2(\Omega)}^2+\|\nabla u(t)\|_{L^2(\Omega)}^2+4\int_0^t \langle A_0\partial_t u(\tau),\partial_t u(\tau)\rangle_{L^2(\Omega)}\,d\tau \\
         &-2\re {\int_0^t\langle (2 \im A\cdot \nabla_x + \im \Div_x A-|A|^2-|A_0|^2-\Phi)u(\tau),\partial_t u(\tau)\rangle_{L^2(\Omega)}\,d\tau}\\
         &=2 \re\int_0^t\langle h(\tau),\partial_tu (\tau)\rangle_{L^2(\Omega)}\,d\tau
    \end{split}
    \end{equation}
    for $0\leq t\leq T$. Let us introduce the nonnegative, continuous quantity 
    \[
        \Psi(t)\vcentcolon = \|\partial_t u(t)\|_{L^2(\Omega)}^2+\|\nabla u(t)\|_{L^2(\Omega)}^2
    \]
    for $0\leq t\leq T$. Then, using the Poincar\'e and H\"older inequality we get 
    \[
    \begin{split}
        &\Psi(t)\leq \int_0^t \|h(\tau)\|_{L^2(\Omega)}^2\,d\tau\\
        &+ C\int_0^t ((1+\|A_0\|_{L^{\infty}(\Omega)}+\|A\|_{L^{\infty}(\Omega)})^2+\|\Div_x A\|_{L^{\infty}(\Omega)}+\|\Phi\|_{L^{\infty}(\Omega)})\Psi(\tau)\,d\tau
    \end{split}
    \]
    for $0\leq t\leq T$. Thus, Gronwall's inequality ensures that we have the following energy inequality
    \begin{equation}
    \label{eq: energy inequality}
        \begin{split}
            &\|\partial_t u(t)\|_{L^2(\Omega)}^2+\|\nabla u(t)\|_{L^2(\Omega)}^2\leq \|h\|_{L^2(\Omega_t)}^2\\
            &\quad\cdot\exp\left(Ct\left[(1+\|A_0\|_{L^{\infty}(\Omega)}+\|A\|_{L^{\infty}(\Omega)})^2+\|\Div_x A\|_{L^{\infty}(\Omega)}+\|\Phi\|_{L^{\infty}(\Omega)}\right]\right)
        \end{split}
    \end{equation}
    for $0\leq t\leq T$. In particular, we get
    \begin{equation}
     \label{eq: energy inequality 2}
        \|\partial_t u\|_{L^{\infty}(0,T;L^2(\Omega))}+\|\nabla u\|_{L^{\infty}(0,T;L^2(\Omega))}\leq C\|h\|_{L^2(\Omega_T)}
    \end{equation}
    for some $C>0$ only depending on $\Omega, T$ and the $L^{\infty}(\Omega)-$norms of $A_0,A,\Div_x A$ and $\Phi$.\\
    
\noindent{\textit{Step 2.}} Next, we construct a solution to \eqref{maineqn well-posedness}. If we decompose the seeked solution $u$ as $u= v+w_{f}$, where $w_{f}\in H^{2,2}(\Omega_T)$ is the function from Lemma~\ref{w_{f}.}, then $v$ needs to solve
     \begin{equation}
     \label{eq. for v_{2}}
      \begin{cases}
       L_{\mathcal{A},\Phi}v = -L_{\mathcal{A},\Phi}w_{f}  &\text{in}~~~~~ \Omega_T,\\
    v = 0 & \text{on} ~~~~(\partial\Omega)_T,\\
     v(0)=0,~~ \partial_tv(0) = 0  &\text{in}~~~~\Omega.
    \end{cases}
 \end{equation}
 Since $w_{f} \in H^{2,2}(\Omega_T)$, we have  $L_{\mathcal{A},\Phi}w_{f} \in L^{2}(\Omega_T)$ and we can invoke the results from \textit{Step 1} to deduce the existence of a unique solution $v\in C([0,T];H^1_0(\Omega))\cap C^1([0,T];L^2(\Omega))$ of \eqref{eq. for v_{2}} satisfying the energy estimate
 \begin{equation}
 \label{Energy Estimate for v_{2}}
 \begin{split}
   \|\partial_t v\|_{L^{\infty}(0,T;L^2(\Omega))}+ \|\nabla v\|_{L^{\infty}(0,T;L^2(\Omega))} &\leq C \| L_{\mathcal{A},\Phi}w_{f} \|_{L^{2}(\Omega_T)}\\
   & \leq C \| w_{f} \|_{H^{2,2}(\Omega_T)}
   \end{split}
 \end{equation}
 (see \eqref{eq: energy inequality 2}, \eqref{eq: equivalent form L AP} and \eqref{eq: H1 L2 estimate}). By the above estimates the constant $C>0$ again only depends on $\Omega, T$ and the $L^{\infty}(\Omega)-$norms of $A_0,A,\Div_x A$ and $\Phi$. Next, note that there holds
  \begin{equation}\label{eqn for w}
    \|\partial_t w_{f}\|_{L^{\infty}(0,T;L^2(\Omega))} +\| w_{f}\|_{L^{\infty}(0,T;H^1(\Omega))}\leq C \| w_{f} \|_{H^{2,2}(\Omega_T)}.
 \end{equation}
 The bound for the time derivative is a consequence of the classical embedding $H^1(0,T;L^2(\Omega))\hookrightarrow C([0,T];L^2(\Omega))$ and the estimate for the gradient follows from \cite[Theorem~3.1]{lions2012nonVol1}, which in particular ensures that there holds
 \[
    H^{2,2}(\Omega_T)\hookrightarrow C([0,T];H^{3/2}(\Omega))\hookrightarrow C([0,T];H^{1}(\Omega)).
 \]
 Now, recalling that $u=v+w_f$ and using the Poincar\'e inequality, \eqref{Energy Estimate for v_{2}} and \eqref{eqn for w} we deduce the estimate
\begin{equation}
     \|\partial_t u\|_{L^{\infty}(0,T;L^2(\Omega))} + \|u\|_{L^{\infty}(0,T;H^1(\Omega))} \leq C \| w_{f} \|_{H^{2,2}(\Omega_T)}.
\end{equation}
Taking Lemma~\ref{w_{f}.} into account, we get
\begin{equation}
\label{add1}
    \|\partial_t u\|_{L^{\infty}(0,T;L^2(\Omega))} + \|\nabla u\|_{L^{\infty}(0,T;L^2(\Omega))} \leq C \| f \|_{D^{3/2}((\partial\Omega)_T)}.
\end{equation}
Thus, it remains to show that $\partial_{\nu}u\in L^2((\partial\Omega)_T)$ and there holds
\begin{equation}
\label{eq: estimate normal derivative}
    \|\partial_\nu u\|_{L^2((\partial\Omega)_T)}\leq C\|w_f\|_{H^{2,2}(\Omega_T)}
\end{equation}
as then Lemma~\ref{w_{f}.} and \eqref{add1} implies the estimate \eqref{eq: full estimate well-posedness thm}. First notice that by Lemma~\ref{w_{f}.} we have $\partial_\nu u=\partial_\nu v$ and so by \eqref{Energy Estimate for v_{2}} and the Poincar\'e inequality, it is enough to show $\partial_\nu v\in L^2((\partial\Omega)_T)$ obeying the estimate
\begin{equation}
\label{eq: remaining estimate for normal derivative}
    \|\partial_\nu v\|_{L^2((\partial\Omega)_T)}\lesssim \|w_f\|_{H^{2,2}(\Omega_T)}+\|\partial_t v\|_{L^{\infty}(0,T;L^2(\Omega))}+\|v\|_{L^{\infty}(0,T;H^1(\Omega))}.
\end{equation}
For this let us observe that $v$ solves
\begin{equation}
    \begin{cases}
       (\partial_t^2-\Delta)v = F  &\text{in}~~~~~ \Omega_T,\\
    v = 0 & \text{on} ~~~~(\partial\Omega)_T,\\
     v(0)=0,~~ \partial_tv(0) = 0  &\text{in}~~~~\Omega,
    \end{cases}
\end{equation}
where $F$ is given by
\[
    F= (2 \im A\cdot \nabla_x + \im \Div_x A-|A|^2-|A_0|^2-\Phi)v-2A_0\partial_tv-L_{\mathcal{A},\Phi}w_{f}.
\]
Using \eqref{Energy Estimate for v_{2}} and \eqref{eq: H1 L2 estimate}, we get
\small{\begin{align}
\label{eq: estimate for source}
    &\|F\|_{L^2(\Omega_T)}\leq C(\|A_0\|_{L^{\infty}(\Omega)}\|\partial_t v\|_{L^{\infty}(0,T;L^2(\Omega))}+\|w_f\|_{H^{2,2}(\Omega_T)}) \\
    &+C(\|\Div_x A\|_{L^{\infty}(\Omega)}+(1+\|A\|_{L^{\infty}(\Omega)})^2+\|A_0\|^2_{L^{\infty}(\Omega)}+\|\Phi\|_{L^{\infty}(\Omega)})\|\nabla v\|_{L^{\infty}(0,T;L^2(\Omega))}\\
    &\leq C(\|w_f\|_{H^{2,2}(\Omega_T)}+\|\partial_t v\|_{L^{\infty}(0,T;L^2(\Omega))}+\|v\|_{L^{\infty}(0,T;H^1(\Omega))}).
\end{align}}
From \cite[Proof of Theorem~4.1]{Kia14} we deduce that $\partial_\nu v\in L^2((\partial\Omega)_T)$ and there holds
\[
    \|\partial_\nu v\|_{L^2((\partial\Omega)_T)}\leq C\|F\|_{L^2(\Omega_T)}.
\]
Thus, combining this with \eqref{eq: estimate for source}, we arrive at the estimate \eqref{eq: remaining estimate for normal derivative}. This gives $\partial_\nu u\in L^2((\partial\Omega)_T)$ and we see that \eqref{eq: estimate normal derivative} holds. The assertion on the constant $C_0>0$ in \eqref{eq: full estimate well-posedness thm} is easily seen by keeping track of the previous estimates.

Finally, the uniqueness of solutions to \eqref{maineqn well-posedness} follows from the unique solvability of \eqref{eq: well-posedness homogeneous}. More precisely, if $u,v$ are two solutions to \eqref{maineqn well-posedness}, then $\widetilde{u}=u-w_f$ and $\widetilde{v}=v-w_f$ solve the same problem \eqref{eq. for v_{2}} with homogeneous boundary conditions and thus we necessarily have $\widetilde{u}=\widetilde{v}$. In conclusion, the constructed solution $u$ is the unique solution of \eqref{maineqn well-posedness}.
\end{proof}

 \section{Geometric Optics Solutions} 
 \label{sec: geometric optics sol}
In this section we prove the existence of geometric optics solutions to the relativistic wave equation \eqref{eq: relativistic schroedinger}.

\begin{lemma}\label{lemma: geom opt sol +}
Let $\Omega=\omega\times\R$ be an infinite waveguide with $\omega\subset\R^2$ being a smoothly bounded domain and $T>0$. Assume that the vector potential  $\mathcal{A}=(A_0,\im A)\in C^2_0(\Omega)$ and $\Phi\in L^{\infty}(\Omega)$. Furthermore, let $\theta \in \mathbb{S}^{1}:=\partial B_1(0,\R^2)$, $h\in  H^2(\R)$ and $\varphi\in H^3(\R^{2})$. For any $\lambda > 0$, there exists a solution 
\begin{equation}
\label{eq: CGO sol}
    u^{+}_{\lambda} \in C^{1}([0,T], L^{2}(\Omega))\cap C([0,T], H^{1}(\Omega))
\end{equation}
of the relativistic wave equation
\begin{equation}\label{rel Schroed CGO}
 L_{\mathcal{A},\Phi}u = 0 \text{ in } \Omega_T
 \end{equation}
having the form 
\begin{equation}\label{hypdesoln22}
u^{+}_{\lambda}(X,t) = \varphi(x + t\theta)h(y)A^{+}(X,t)e^{\im\lambda(x\cdot \theta + t)} + r_{\lambda}^{+}(X,t).
\end{equation}
Here, $A^{+}(X,t)$ is given by 
\begin{equation}\label{eq: form of A^(+)}
A^{+}(X,t) = \exp\left(- \int_{0}^{t}{(1,-\theta)\cdot \mathcal{A}(x+s\theta,y)\,ds}\right)    
\end{equation}
and $r_{\lambda}^{+}\in C([0,T];H^1_0(\Omega))\cap C^1([0,T];L^2(\Omega))$ satisfies
\begin{equation}\label{hypdesoln23}
   r_{\lambda}^{+}(0) = \partial_{t}r_{\lambda}^{+}(0) =  0   \text{ in } \Omega.
\end{equation}
Moreover, there exists a constant $C=C\big(\Omega, T, \|\Phi\|_{L^{\infty}(\Omega)},\|\mathcal{A}\|_{W^{2,\infty}(\Omega)}\big) > 0$ such that
\begin{equation}\label{eq: estimate r plus geom opt}
\begin{split}
 &\|\partial_{t}  r_{\lambda}^{+}\|_{L^{\infty}(0,T;L^{2}(\Omega))} + \|\nabla r_{\lambda}^{+}\|_{L^{\infty}(0,T;L^{2}(\Omega))}+\lambda \|r_{\lambda}^{+}\|_{L^{\infty}(0,T;L^{2}(\Omega))}  \\
 &\quad \leq C \|\varphi\|_{H^{3}(\R^{2})}\|h\|_{H^{2}(\R)}.
 \end{split}
\end{equation}
\end{lemma}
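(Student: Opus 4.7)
My plan is the standard WKB / geometric-optics construction for $L_{\mathcal{A},\Phi}$ on the waveguide, supplemented by a first-order correction to extract the factor of $\lambda$ in the bound on $\|r_\lambda^+\|_{L^\infty(0,T;L^2(\Omega))}$.

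First, I substitute $u_\lambda^+(X,t) = B(X,t)\,e^{\im\lambda\psi(X,t)} + r_\lambda^+(X,t)$ with $\psi(X,t) = x\cdot\theta + t$ into \eqref{rel Schroed CGO} and use the expanded form \eqref{eq: op L AP}. A direct computation (using that $A_0, A$ are time-independent) yields
\[
L_{\mathcal{A},\Phi}\bigl(B e^{\im\lambda\psi}\bigr) = \bigl[L_{\mathcal{A},\Phi} B + 2\im\lambda\, \mathcal{T} B\bigr] e^{\im\lambda\psi},
\qquad \mathcal{T} := (\partial_t + A_0) - \theta\cdot(\nabla_x + \im A),
\]
the $\lambda^2$-contributions cancelling by the eikonal identity $|\theta|^2 = 1 = |\partial_t\psi|^2$. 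Choosing $B$ in the kernel of $\mathcal{T}$ eliminates the $O(\lambda)$ term; since $\mathcal{T}$ annihilates $\varphi(x+t\theta)$ (because $\partial_t \varphi(x+t\theta) = \theta\cdot\nabla\varphi(x+t\theta)$) and $h(y)$ trivially, this reduces to the scalar transport
\[
(\partial_t - \theta\cdot\nabla_x) A^+ + (A_0 - \im\theta\cdot A) A^+ = 0, \qquad A^+(X,0) = 1,
\]
which is verified for the formula \eqref{eq: form of A^(+)} by differentiating under the integral sign and using the chain-rule identity $\theta\cdot\nabla_x F(x+s\theta, y) = \tfrac{d}{ds} F(x+s\theta, y)$.

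Consequently, $r_\lambda^+$ must solve
\[
L_{\mathcal{A},\Phi} r_\lambda^+ = -(L_{\mathcal{A},\Phi} B) e^{\im\lambda\psi}\text{ in }\Omega_T,\quad r_\lambda^+|_{(\partial\Omega)_T} = 0,\quad r_\lambda^+(0) = \partial_t r_\lambda^+(0) = 0.
\]
Step~1 of the proof of Theorem~\ref{thm: Existence} (non-homogeneous source, homogeneous boundary and initial data) gives existence of $r_\lambda^+$ in the claimed function class together with the $\lambda$-uniform bound $\|\partial_t r_\lambda^+\|_{L^\infty L^2} + \|\nabla r_\lambda^+\|_{L^\infty L^2} \lesssim \|L_{\mathcal{A},\Phi} B\|_{L^2(\Omega_T)}$, which is controlled by $\|\varphi\|_{H^2(\R^2)}\|h\|_{H^2(\R)}$ times a constant depending on $\|\mathcal{A}\|_{W^{2,\infty}(\Omega)}$ and $\|\Phi\|_{L^\infty(\Omega)}$. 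This handles the first two terms of the claimed estimate.

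The decisive step is upgrading $\|r_\lambda^+\|_{L^\infty L^2}$ from the trivial $O(1)$ (via Poincar\'e) to $O(\lambda^{-1})$. I introduce a first-order WKB correction by writing $r_\lambda^+ = (2\im\lambda)^{-1} B_1 e^{\im\lambda\psi} + \tilde r_\lambda^+$, where $B_1$ solves the inhomogeneous transport $\mathcal{T} B_1 = L_{\mathcal{A},\Phi} B$ with $B_1(X,0) = 0$, obtained explicitly by integration along the same characteristics as for $A^+$. By the identity of the first paragraph applied to $(2\im\lambda)^{-1} B_1 e^{\im\lambda\psi}$, the $O(1)$ part of the source of $r_\lambda^+$ is exactly cancelled, leaving for $\tilde r_\lambda^+$ an IBVP whose source $(2\im\lambda)^{-1}(L_{\mathcal{A},\Phi} B_1) e^{\im\lambda\psi}$ has $L^2$-norm $O(\lambda^{-1})$, with non-homogeneous boundary trace $-(2\im\lambda)^{-1} B_1 e^{\im\lambda\psi}|_{(\partial\Omega)_T}$ and initial velocity also of order $\lambda^{-1}$. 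The boundary trace is absorbed via Lemma~\ref{w_{f}.}, and the non-zero initial velocity via a standard superposition argument reducing to the homogeneous-data case. A second application of the energy estimate of Theorem~\ref{thm: Existence}, combined with Poincar\'e, yields $\|\tilde r_\lambda^+\|_{L^\infty L^2} = O(\lambda^{-1})$; combined with the trivial bound $\|(2\im\lambda)^{-1} B_1 e^{\im\lambda\psi}\|_{L^\infty L^2} = O(\lambda^{-1})$, the triangle inequality delivers the third term of the claimed estimate.

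The \textbf{main obstacle} is the regularity bookkeeping in the last paragraph: controlling $\|L_{\mathcal{A},\Phi} B_1\|_{L^2(\Omega_T)}$ by $\|\varphi\|_{H^3(\R^2)}\|h\|_{H^2(\R)}$ requires using the transport equation for $B_1$ to trade the second time-derivative for spatial derivatives and invoking the $W^{2,\infty}$ regularity of $\mathcal{A}$; together with the corresponding control on $L_{\mathcal{A},\Phi} B$, this is precisely what pins down the Sobolev indices $\varphi \in H^3$ and $h \in H^2$ appearing in the statement.
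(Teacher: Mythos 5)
Your first two steps agree with the paper: the same WKB ansatz, the same transport equation $\mathcal{T}A^+ = 0$ killing the $O(\lambda)$ term (this is the content of Claim~4.2 in the paper), and the same use of the energy estimate from Theorem~\ref{thm: Existence} to control $\|\partial_t r_\lambda^+\|_{L^\infty L^2}$ and $\|\nabla r_\lambda^+\|_{L^\infty L^2}$. The divergence, and the genuine gap, is in your mechanism for improving $\|r_\lambda^+\|_{L^\infty L^2}$ from $O(1)$ to $O(\lambda^{-1})$.

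\textbf{Gap 1: the boundary trace of the WKB correction is too large.} The function $B_1$ is produced by integrating $L_{\mathcal{A},\Phi}B$ along the characteristics of $\mathcal{T}$, and $L_{\mathcal{A},\Phi}B$ is generically nonzero on $(\partial\Omega)_T$ (it contains $((\theta\cdot\nabla)^2-\Delta)\varphi\cdot h - \varphi h''$ plus lower-order terms even where $\mathcal{A}$ vanishes). Hence $(2\im\lambda)^{-1}B_1 e^{\im\lambda\psi}$ does not vanish on $(\partial\Omega)_T$, and $\tilde r_\lambda^+$ carries a nonzero boundary trace $f = (2\im\lambda)^{-1}B_1 e^{\im\lambda\psi}|_{(\partial\Omega)_T}$. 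Because of the oscillating factor, the $H^{3/2,3/2}((\partial\Omega)_T)$ part of $\|f\|_{D^{3/2}}$ scales like $\lambda^{1/2}$, not $\lambda^{-1}$: $\|f\|_{H^1(0,T;L^2)} = O(1)$ and $\|f\|_{H^2(0,T;L^2)} = O(\lambda)$, so by interpolation $\|f\|_{H^{3/2}(0,T;L^2)} = O(\lambda^{1/2})$. The only estimate available from Theorem~\ref{thm: Existence} then gives $\|\tilde r_\lambda^+\|_{L^\infty H^1} \lesssim \|f\|_{D^{3/2}} + \|\text{source}\|_{L^2(\Omega_T)} = O(\lambda^{1/2})$, so the triangle inequality yields $\|r_\lambda^+\|_{L^\infty L^2} = O(\lambda^{1/2})$, which is worse than the trivial $O(1)$ bound, not the desired $O(\lambda^{-1})$. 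There is no room in the energy estimate to exploit the oscillation of $f$.

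\textbf{Gap 2: the regularity does not close.} Your source is $(2\im\lambda)^{-1}(L_{\mathcal{A},\Phi}B_1)e^{\im\lambda\psi}$, and $L_{\mathcal{A},\Phi}$ contains the purely spatial operators $\Delta_x$ and $\partial_y^2$. Since $B_1$ is a characteristic integral of $L_{\mathcal{A},\Phi}B$, which already contains second derivatives of $\varphi$, $h$, $\mathcal{A}$, the terms $\Delta_x B_1$ and $\partial_y^2 B_1$ contain \emph{fourth} derivatives of $\varphi$, $h$, $\mathcal{A}$. Using the transport equation $\mathcal{T}B_1 = L_{\mathcal{A},\Phi}B$ only lets you trade $\partial_t^2 B_1$ for spatial derivatives and $\nabla_{X}L_{\mathcal{A},\Phi}B$; it does not reduce the order of $\Delta_x B_1$ or $\partial_y^2 B_1$. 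Carried through, your construction requires $\varphi\in H^4$, $h\in H^4$, $\mathcal{A}\in W^{4,\infty}$, strictly more than the lemma's $H^3$, $H^2$, $W^{2,\infty}$.

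The paper avoids both problems with a different trick (Claim~4.3): instead of adding a first-order WKB corrector, it passes to the time primitive $R_\lambda^+(X,t) = \int_0^t r_\lambda^+(X,s)\,ds$. This inherits zero boundary and zero initial data directly from $r_\lambda^+$ (no boundary-trace issue at all), solves $L_{\mathcal{A},\Phi}R_\lambda^+ = F_1^\lambda := \int_0^t g(\tau)\,d\tau$, and a single time integration by parts, legitimate because $\mathcal{A}$, $\Phi$ are time-independent, yields $\|F_1^\lambda\|_{L^2(\Omega_T)} \lesssim \lambda^{-1}\|\varphi\|_{H^3}\|h\|_{H^2}$: the integration by parts costs one extra $\partial_t$ on $g_0$, which raises $\varphi$ to order 3 (since $\partial_t$ hits $\varphi(x+t\theta)$) but leaves $h$ at order 2 and $\mathcal{A}$ at order 2. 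The energy estimate then gives $\|r_\lambda^+(t)\|_{L^2} = \|\partial_t R_\lambda^+(t)\|_{L^2} \lesssim \|F_1^\lambda\|_{L^2} = O(\lambda^{-1})$, which is exactly what is claimed and with exactly the stated regularity.
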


\begin{proof}
First note that in light of \eqref{rel Schroed CGO}, \eqref{eq: form of A^(+)} and \eqref{hypdesoln23}, it is enough to prove the existence of a function $r_{\lambda}^{+}\in C([0,T];H^1_0(\Omega))\cap C^1([0,T];L^2(\Omega))$ solving
 \begin{equation}
 \label{eq: PDE for r plus}
\begin{cases}
L_{\mathcal{A},\Phi}r_{\lambda}^{+} = g & \text{in } \Omega_T,\\
r_{\lambda}^{+} = 0 & \text{on } (\partial\Omega)_T,\\
r_{\lambda}^{+}(0)  = \partial_{t}r_{\lambda}^{+}(0)=0 &  \text{in } \Omega
\end{cases}
\end{equation}
and satisfying the estimate \eqref{eq: estimate r plus geom opt}, where  $g\colon\Omega_T\to \C$ is given by
\begin{equation}
\label{eq: def of g}
    g(X,t)= -L_{\mathcal{A},\Phi}\big[ \varphi(x + t\theta)h(y)A^{+}(X,t)e^{\im\lambda( x \cdot \theta + t )}\big]\in L^2(\Omega_T).
\end{equation}

The $L^2(\Omega_T)$ regularity can be easily seen by \eqref{eq: equivalent form L AP} and recalling that Sobolev functions are absolutely continuous on lines (see \cite[Section 11.3]{leoni2024first}). Before constructing the function $r^{+}_\lambda$, let us show the following assertion.
\begin{claim}
\label{claim r plus inhom}
   The function $g$ can be written as
\begin{equation}
\label{eq: equiv form of g}
\begin{split}
    g(X,t) =-e^{\im\lambda( x\cdot \theta + t )}g_{0}(X,t),
\end{split}
\end{equation}
where $g_0$ is given by
\begin{equation}
\label{eq: def of g0}
    g_0(X,t)\vcentcolon =L_{\mathcal{A},\Phi}\left(\varphi(x + t\theta)h(y)A^{+}(X,t)\right)\in L^2(\Omega_T).
\end{equation}
\end{claim}
\begin{proof}
We first show that the function $A^{+}(X, t)$, given by \eqref{eq: form of A^(+)}, solves 
\begin{equation}
\label{eq: transport eq for Aplus}
    (\partial_{t} - \theta\cdot\nabla_{x})A^{+}(X,t) = -\big(A_{0} - \im\theta\cdot A\big)(X) A^{+}(X,t).
\end{equation}
In fact, an integration by parts ensures that there holds
\begin{align*}
 \theta\cdot\nabla_x A^+(X,t)&= A^{+}(X,t) \left(- \int_{0}^{t}{(1,-\im\theta)_{k} \theta_{j}\partial_{{j}}A_{k}(x+s\theta,y)\,ds}\right)\\
 &=A^{+}(X,t) \left( -(1,-\im\theta)_{k}\int_{0}^{t}{ \frac{d}{ds}A_{k}(x+s\theta,y)\,ds}\right)\\
 &= A^{+}(X,t)\big(  -(1,-\theta)\cdot\mathcal{A}(x+t\theta,y)  +(A_0-\im\theta\cdot A)(X)\big)  \\ 
 &= \partial_{t}A^{+}(X,t)+(A_0-\im\theta\cdot A)(X)A^{+}(X,t),
\end{align*}
which is equivalent to \eqref{eq: transport eq for Aplus}. 
To see the identity \eqref{eq: equiv form of g}, let us define the d'Alembertian $\Box_{x,t} =  \partial_t^2 - \Delta_x$ and introduce the functions
\begin{equation}
\label{eq: auxiliary fcts}
   \tilde{\Phi}(X) = (|A_{0}|^2- \im \Div_x A + |A|^{2} + \Phi)(X) \text{ and } f(X,t)=\varphi(x+t\theta)h(y)A^{+}(X, t).
\end{equation}
Then we have
\begin{equation}
L_{\mathcal{A}, \Phi} = \Box_{x,t} - \partial_y^2 + 2A_0 \partial_t - 2 \im A \cdot \nabla_x + \tilde{\Phi}
\end{equation}
(see~\eqref{eq: equivalent form L AP}) and the product rule implies
\begin{equation}\label{eqn for A^{+}(x,y,t)}
\begin{split}
-g&=L_{\mathcal{A}, \Phi} \big(e^{\im\lambda(x \cdot \theta + t )}f\big)\\
&= e^{\im \lambda (x \cdot \theta+ t) } \Box_{x,t} f + f \underbrace{\Box_{x,t} e^{\im \lambda (x \cdot \theta + t )} }_{=0} + 2 \im\lambda e^{\im \lambda ( x \cdot \theta + t )} ( \partial_{t} f - \theta  \cdot \nabla_{x} f ) \\ 
&\quad  - e^{\im \lambda (x \cdot \theta + t)} \partial_y^2 f  + 2 A_{0} e^{\im \lambda (x \cdot \theta + t)} \partial_{t}f  - 2 \im e^{\im \lambda (x \cdot \theta + t)} A \cdot \nabla_{x}f \\  
&\quad  +2\im \lambda e^{\im \lambda (x \cdot \theta + t)}( A_{0}  f  - \im A \cdot \theta f) + \Tilde{\Phi} e^{\im \lambda (x \cdot \theta + t)}f  \\ 
&= e^{\im \lambda (x \cdot \theta + t)} L_{\mathcal{A}, \Phi} f  +2\im \lambda e^{\im \lambda (x \cdot \theta +t)} \left(   \partial_{t} f -  \theta \cdot \nabla_{x} f +  (A_{0}  - \im \theta \cdot A )f \right),
\end{split}
\end{equation}
where the first line comes from the product rule for the d'Alembertian $\Box_{x,t}$. Now, using \eqref{eq: auxiliary fcts}, \eqref{eq: transport eq for Aplus} and $(\partial_t-\theta\cdot\nabla_x)\varphi(x+t\theta)=0$, we get
 \begin{equation}
 \label{eq: help computation}
 \begin{split}
    (A_{0}-  \im \theta\cdot A)(X) f(X,t)
   & =-\varphi(x+t\theta)h(y)(\partial_{t} - \theta\cdot\nabla_{x})A^{+}(X,t)\\
  & =-(\partial_t - \theta \cdot \nabla_x)\varphi(x+t\theta)h(y)A^{+}(X, t)\\
  & =-(\partial_t - \theta \cdot \nabla_x)f(X,t).
  \end{split}
\end{equation}
Therefore, \eqref{eqn for A^{+}(x,y,t)} and \eqref{eq: help computation} imply that $g$ is of the form \eqref{eq: equiv form of g} with $g_0$ given by \eqref{eq: def of g0}.
\end{proof}

Next, we may observe that the existence of $r^{+}_\lambda\in C([0,T];H^1_0(\Omega))\cap C^1([0,T];L^2(\Omega))$ solving \eqref{eq: PDE for r plus} follows from $g\in L^2(\Omega_T)$ and Step 1 of the proof of Theorem \ref{thm: Existence}. Next, we show that $r^+_\lambda$ satisfies the estimate \eqref{eq: estimate r plus geom opt}. For this purpose, let us introduce the time integral transform 
\begin{equation}
\label{eq: time integral r plus}
    R^+_\lambda(X,t) = \int_{0}^{t}{r_{\lambda}^{+}(X,s)\,ds}
\end{equation}
of $r^+_\lambda$. The following claim collects the main properties of $R^+_\lambda$, which are needed to establish \eqref{eq: estimate r plus geom opt}.
\begin{claim} 
\label{claim: properties of R plus}
The function $R^+_\lambda$ has the following properties:
    \begin{enumerate}[(i)]
        \item\label{PDE for time integral r plus} $R^+_\lambda$ solves the PDE 
\begin{equation}
\label{eq: PDE for R plus}
\begin{split}
    \begin{cases}
 L_{\mathcal{A},\Phi}R^+_\lambda  = \int_{0}^{t}g(\tau)\, d\tau & \text{in } \Omega_T,\\
R_{\lambda}^{+} = 0 & \text{on } (\partial\Omega)_T,\\
R_{\lambda}^{+}(0)  = \partial_{t}R_{\lambda}^{+}(0)=0 &  \text{in } \Omega
\end{cases}
\end{split}
\end{equation}
        \item\label{estimate for source of R plus} and the function $F_1^\lambda\vcentcolon =\int_{0}^{t}g(\tau)\, d\tau$ satisfies the estimate
        \begin{equation} 
\label{eq: estimate F1}
    \norm{F_{1}^{\lambda}}_{L^{2}(\Omega_T)} \leq  \dfrac{C}{\lambda}\norm{\varphi}_{H^{3}(\R^2)}  \norm{h}_{H^{2}(\R)} .
\end{equation}
    \end{enumerate}
\end{claim}
\begin{proof}
    \ref{PDE for time integral r plus}: First, recall that $r^+_\lambda$ solves \eqref{eq: PDE for r plus}, if $r^+_\lambda\in C([0,T];H^1_0(\Omega))\cap C^1([0,T];L^2(\Omega))$ satisfies $r^+_\lambda(0)=\partial_t r^+_\lambda(0) = 0 $ and
    \begin{equation}
    \label{eq: weak form for r plus}
    \begin{split}
        &-\int_0^T\langle \partial_t r^+_\lambda,v\rangle_{L^2(\Omega)}\partial_t \psi\, dt+2\int_0^T \langle A_0\partial_t r^+_\lambda,v\rangle_{L^2(\Omega)}\psi\,dt
 +\int_0^T a(r^+_\lambda,v)\psi\,dt\\
 &=\int_0^T\langle g(t),v\rangle_{L^2(\Omega)} \psi\,dt
 \end{split}
    \end{equation}
    for all $v\in H^1_0(\Omega)$ and $\psi\in C_c^{\infty}((0,T))$, where the sesquilinear form $a$ is given by \eqref{eq: sesquilinear fomr a}. Next, we assert that the identity indeed holds for all $\psi\in C_c^{\infty}([0,T))$ as the initial values of $r^+_\lambda$ are zero. This can be proved similarly as  \cite[Claim 4.2]{Semilinear-nonlocal-WEQ}. Indeed, let us consider for $\eps>0$ the parabolically regularized problem
    \begin{equation}
         \label{eq: parabolic reg PDE for r plus}
\begin{cases}
L^{\eps}_{\mathcal{A},\Phi}r = g & \text{in } \Omega_T,\\
r = 0 & \text{on } (\partial\Omega)_T,\\
r(0)  = \partial_{t}r(0)=0 &  \text{in } \Omega,
\end{cases}
    \end{equation}
    where $L^{\eps}_{\mathcal{A},\Phi}$ is given by
    \begin{equation}
        \label{eq: parabolic reg of L}L^{\eps}_{\mathcal{A},\Phi}=L_{\mathcal{A},\Phi}-\eps \Delta \partial_t. 
    \end{equation}
    Thanks to the additional viscosity term $-\eps \Delta\partial_t$ the regularized problem \eqref{eq: parabolic reg PDE for r plus} falls into the framework of \cite[Chapter XVIII, \S 5, Problem (P1)]{DautrayLionsVol5} and we deduce from \cite[Chapter XVIII, \S 5, Theorem 1]{DautrayLionsVol5} that for any $\eps>0$ there exists a unique solution $r^+_{\lambda,\eps}\in H^1(0,T;H^1_0(\Omega))$ with $\partial_t^2r^+_{\lambda,\eps}\in L^2(0,T;H^{-1}(\Omega))$. Furthermore, using \cite[Chapter XVIII, \S 5, Section 5.3.2]{DautrayLionsVol5}, we infer the following convergence results as $\eps\to 0$:
    \begin{enumerate}[(a)]
        \item\label{conv time der} $\partial_t r^+_{\lambda,\eps}\weakstar \partial_t r^+_{\lambda}$ in $L^{\infty}(0,T;L^2(\Omega))$,
        \item\label{conv 2nd time der} $\partial_t^2 r^+_{\lambda,\eps}\weak \partial_t^2 r^+_{\lambda}$ in $L^2(0,T;H^{-1}(\Omega))$.
    \end{enumerate}
    Now, using the usual integration by parts formula and that the initial conditions for $r^+_{\lambda}$ as well as $r^+_{\lambda,\eps}$ are zero, we may compute
    \[
    \begin{split}
        -\int_0^T\langle \partial_t r^+_\lambda,v\rangle_{L^2(\Omega)}\partial_t \psi\, dt&\overset{\ref{conv time der}}{=}-\lim_{\eps\to 0}\int_0^T\langle \partial_t r^+_{\lambda,\eps},v\rangle_{L^2(\Omega)}\partial_t \psi\, dt\\
        &\,=\lim_{\eps\to 0}\int_0^T\langle \partial_t^2 r^+_{\lambda,\eps},v\rangle_{H^{-1}(\Omega)\times H^1_0(\Omega)}\psi\, dt\\
        &\overset{\ref{conv 2nd time der}}{=}\int_0^T\langle \partial_t^2 r^+_{\lambda},v\rangle_{H^{-1}(\Omega)\times H^1_0(\Omega)}\psi\, dt
    \end{split}
    \]
    for any $v\in H^1_0(\Omega)$ and $\psi\in C_c^{\infty}((0,T))$. Hence, using the density of the inclusion $C_c^{\infty}((0,T))\otimes H^1_0(\Omega)\hookrightarrow L^2(0,T;H^1_0(\Omega))$ and \eqref{eq: weak form for r plus}, we see that
    \begin{equation}
    \label{eq: weak sol in duality}
    \begin{split}
       & \int_0^T\langle \partial_t^2 r^+_{\lambda},v\rangle_{H^{-1}(\Omega)\times H^1_0(\Omega)}\, dt+2\int_0^T \langle A_0\partial_t r^+_\lambda,v\rangle_{L^2(\Omega)}\,dt
 +\int_0^T a(r^+_\lambda,v)\psi\,dt \\ 
 &=\int_0^T\langle g(t),v\rangle_{L^2(\Omega)} \,dt
    \end{split}
    \end{equation}
    for all $v\in L^2(0,T;H^1_0(\Omega))$. Next, note that by an integration by parts and \eqref{eq: parabolic reg PDE for r plus} we have
    \[
    \begin{split}
        \int_0^T\langle \partial_t^2 r^+_{\lambda},v\rangle_{H^{-1}(\Omega)\times H^1_0(\Omega)}\psi\, dt&\overset{\ref{conv 2nd time der}}{=}\lim_{\eps\to 0}\int_0^T\langle \partial_t^2 r^+_{\lambda,\eps},v\rangle_{H^{-1}(\Omega)\times H^1_0(\Omega)}\, dt\\
        &\,=-\lim_{\eps\to 0}\int_0^T\langle \partial_t v, \partial_t r^+_{\lambda,\eps}\rangle_{H^{-1}(\Omega)\times H^1_0(\Omega)}\, dt\\
        &\,=-\lim_{\eps\to 0}\int_0^T\langle  \partial_t r^+_{\lambda,\eps},\partial_t v\rangle_{L^2(\Omega)}\, dt\\
        &\overset{\ref{conv time der}}{=}-\int_0^T\langle  \partial_t r^+_{\lambda},\partial_t v\rangle_{L^2(\Omega)}\, dt
    \end{split}
    \]
    for any $v\in C_c^{\infty}([0,T))\otimes H^1_0(\Omega)$. Hence, from \eqref{eq: weak sol in duality} we deduce that
    \begin{equation}
    \label{eq: distributional sols}
        \begin{split}
            & -\int_0^T\langle  \partial_t r^+_{\lambda},\partial_t v\rangle_{L^2(\Omega)}\, dt+2\int_0^T \langle A_0\partial_t r^+_\lambda,v\rangle_{L^2(\Omega)}\,dt
 +\int_0^T a(r^+_\lambda,v)\psi\,dt\\
 &=\int_0^T\langle g(t),v\rangle_{L^2(\Omega)} \,dt
        \end{split}
    \end{equation}
    for any $v\in C_c^{\infty}([0,T))\otimes H^1_0(\Omega)$.
    
    Now, since $R^+_{\lambda}$ is given by \eqref{eq: time integral r plus}, we clearly have $R_{\lambda}^+\in C([0,T];H^1_0(\Omega))\cap C^1([0,T];L^2(\Omega))$ and the initial values vanish. Thus, it remains to show that $R^+_\lambda$ satisfies \eqref{eq: weak form for r plus} with $g$ replaced by $\int_0^tg(\tau)\,d\tau\in L^2(\Omega_T)$. A direct computation shows that for any $\varphi\in C_c^{\infty}((0,T))$ and $v\in H^1_0(\Omega)$ there holds
    \begin{equation}
        \begin{split}
            &-\int_0^T\langle \partial_t R^+_\lambda,v\rangle_{L^2(\Omega)}\partial_t \varphi\, dt+2\int_0^T \langle A_0\partial_t R^+_\lambda,v\rangle_{L^2(\Omega)}\varphi\,dt
 +\int_0^T a(R^+_\lambda,v)\varphi\,dt\\
 &=-\int_0^T\langle r^+_\lambda,v\rangle_{L^2(\Omega)}\partial_t \varphi\, dt+2\int_0^T \langle A_0 r^+_\lambda,v\rangle_{L^2(\Omega)}\varphi\,dt
 +\int_0^T \int_0^t a(r^+_\lambda(\tau),v)\,d\tau\varphi(t)\,dt\\
 &=\int_0^T\langle \partial_t r^+_\lambda,v\rangle_{L^2(\Omega)} \varphi\, dt+2\int_0^T \langle A_0 r^+_\lambda,v\rangle_{L^2(\Omega)}\varphi\,dt
 +\int_0^T \int_0^t a(r^+_\lambda(\tau),v)\,d\tau\varphi(t)\,dt.
        \end{split}
    \end{equation}
    By introducing the function
    \[
        \psi(t)=\int_t^T \varphi(\tau)\,d\tau\in C_c^{\infty}([0,T))
    \]
    and using the previous calculation, an integration by parts, the time independence of $A_0$, $r^+_{\lambda}(0)=\partial_t r^+_{\lambda}(0)=0$, \eqref{eq: distributional sols} and Fubini's theorem, we get
    \[
    \begin{split}
         &-\int_0^T\langle \partial_t R^+_\lambda,v\rangle_{L^2(\Omega)}\partial_t \varphi\, dt+2\int_0^T \langle A_0\partial_t R^+_\lambda,v\rangle_{L^2(\Omega)}\varphi\,dt
 +\int_0^T a(R^+_\lambda,v)\varphi\,dt\\
 &=-\int_0^T\langle \partial_t r^+_\lambda,v\rangle_{L^2(\Omega)} \partial_t \psi\, dt-2\int_0^T \langle A_0 r^+_\lambda,v\rangle_{L^2(\Omega)}\partial_t\psi\,dt\\
 &\quad 
 -\int_0^T \int_0^t a(r^+_\lambda(\tau),v)\,d\tau\,\partial_t \psi(t)\,dt\\
 &=-\int_0^T\langle \partial_t r^+_\lambda,v\rangle_{L^2(\Omega)} \partial_t\psi\, dt+2\int_0^T \langle A_0 \partial_tr^+_\lambda,v\rangle_{L^2(\Omega)}\psi\,dt
 +\int_0^T a(r^+_\lambda,v)\, \psi\,dt\\
 &=\int_0^T \langle g,v\rangle_{L^2(\Omega)}\psi\,dt\\
 &=\int_0^T\left\langle\int_0^{t} g(\tau)\,d\tau,v\right\rangle_{L^2(\Omega)}\varphi\,dt
    \end{split}
    \]
    for any $v\in H^1_0(\Omega)$ and $\varphi\in C^{\infty}_c((0,T))$. Thus, $R^+_{\lambda}$ indeed solves \eqref{eq: PDE for R plus}.\\

    \noindent\ref{estimate for source of R plus}: Note that by Claim \ref{claim r plus inhom} we can write
    \[
       \begin{split}
           F_{1}^{\lambda}(X,t)&=-\int_{0}^{t}e^{\im\lambda(x\cdot \theta + \tau)}g_{0}(X,\tau)\, d\tau= \dfrac{\im}{\lambda}\int_{0}^{t}\partial_{\tau}\big(e^{\im\lambda(x\cdot \theta + \tau)}\big)g_{0}(X,\tau)\,d\tau.
       \end{split}
    \]
    As the potentials $(\mathcal{A}, \Phi)$ are time independent, an integration by parts gives
\begin{equation}
    \begin{split}
        &F_1^{\lambda}(X,t)= \dfrac{\im}{\lambda}\left(e^{\im \lambda(x\cdot\theta+t)}g_0(X,t)- e^{\im\lambda x\cdot\theta}g_0(X,0)- \int_0^te^{\im\lambda(x\cdot\theta+\tau)}\partial_\tau g_0(X,\tau)\,d\tau\right)\\
        &\quad= \dfrac{\im}{\lambda}e^{\im \lambda(x\cdot\theta+t)}L_{\mathcal{A},\Phi}\left(\varphi(x + t\theta)h(y)A^{+}(X,t)\right)\\
        &\quad\quad - \dfrac{\im}{\lambda} e^{\im\lambda x\cdot\theta}L_{\mathcal{A},\Phi}\left(\varphi(x + t\theta)h(y)A^{+}(X,t)\right)\big|_{t=0}\\
        &\quad\quad - \dfrac{\im}{\lambda}\int_0^t e^{\im\lambda(x\cdot\theta+\tau)}L_{\mathcal{A},\Phi}(\theta\cdot\nabla_x\varphi(x+\tau\theta)h(y)A^+(X,\tau))\,d\tau\\
        &\quad\quad - \dfrac{\im}{\lambda}\int_0^t e^{\im\lambda(x\cdot\theta+\tau)} L_{\mathcal{A},\Phi}(\varphi(x+\tau\theta)h(y)A^+(X,\tau)(-(1,-\theta)\cdot\mathcal{A}(x+\tau\theta,y)))\,d\tau.  
    \end{split}
\end{equation}
Using that $L_{\mathcal{A},\Phi}$ is a second order differential operator with bounded coefficients and the estimate
\[
    \left\|\int_0^t f(\cdot,\tau)\,d\tau\right\|_{L^2(\Omega_T)}\lesssim \|f\|_{L^2(\Omega_T)},
\]
which follows for example from Jensen's inequality,
we obtain the bound \eqref{eq: estimate F1}. Furthermore, we see that the constant in \eqref{eq: estimate F1} only depends on $T>0$, $\|\mathcal{A}\|_{W^{2,\infty}(\Omega)}$ and $\|\Phi\|_{L^{\infty}(\Omega)}$, but is independent of $\theta$. 
\end{proof}

Now, by the statement \ref{PDE for time integral r plus} of Claim \ref{claim: properties of R plus},
we can use the energy estimate \eqref{eq: energy inequality 2} to get
\[
    \|\partial_t R^+_\lambda(t)\|_{L^2(\Omega)}\leq C \|F_1^{\lambda}\|_{L^2(\Omega_T)}.
\]
Then Claim \ref{claim: properties of R plus}, \ref{estimate for source of R plus} implies
\begin{equation}
\label{eau1}
    \norm{r_{\lambda}^{+}(t)}_{L^2(\Omega)}= \|\partial_t R^+_\lambda(t)\|_{L^2(\Omega)}\leq C\  \norm{F_{1}^{\lambda}}_{L^{2}(\Omega_T)} \leq  \dfrac{C}{\lambda}\norm{\varphi}_{H^{3}(\R^2)}  \norm{h}_{H^{2}(\R)}.
\end{equation} 
Thus, the energy estimate \eqref{eq: energy inequality 2} for the equation \eqref{eq: PDE for r plus} and the upper bound \eqref{eau1} ensure that 
\begin{equation}
\begin{split}
     &\|\partial_{t}  {r_{\lambda}^{+}(t)}\|_{L^{2}(\Omega)} + \norm{\nabla r_{\lambda}^{+}(t)}_{L^{2}(\Omega)}+ \lambda \norm{r_{\lambda}^{+}(t)}_{L^2(\Omega)}\\
     &\leq C(\|g\|_{L^2(\Omega_T)}+\norm{\varphi}_{H^{3}(\R^2)}  \norm{h}_{H^{2}(\R)})\\
     &= C(\|\varphi\|_{H^2(\R^2)}\|h\|_{H^2(\R)}+\norm{\varphi}_{H^{3}(\R^2)}  \norm{h}_{H^{2}(\R)})\\
     &\leq C \norm{\varphi}_{H^{3}(\R^2)}  \norm{h}_{H^{2}(\R)}
\end{split}
\end{equation}
for any $t\in [0,T]$. In the above computation we again used Claim \ref{claim r plus inhom} and that $L_{\mathcal{A},\Phi}$ is a 2nd order differential operator with bounded coefficients. Hence, we can conclude the proof.
\end{proof}

Next, for any given vector potential $\mathcal{A}=(A_0,\im A)$ and external potential $\Phi$ we construct geometric optics solutions to the adjoint equation
\begin{equation}
\label{eq: adjoint eq}
    L_{\mathcal{A},\Phi}^* u=0\text{ in }\Omega_T
\end{equation}
such that the remainder term has vanishing final conditions. Note that the adjoint operator $L_{\mathcal{A},\Phi}^*$ is given by
\begin{equation}
\label{eq: adjoint op}
\begin{split}
    L^{*}_{\mathcal{A},\Phi}= L_{\mathcal{A}_r,\Phi},
\end{split}
\end{equation}
where $\mathcal{A}_r=(-A_0,\im A)$ and $L_{\mathcal{A}_r,\Phi}$ is defined by \eqref{eq: op L AP}. We have the following result.




\begin{lemma}\label{lemma: geom opt sol -}
Let $\Omega=\omega\times\R$ be an infinite waveguide with $\omega\subset\R^2$ being a smoothly bounded domain and $T>0$. Assume that the vector potential  $\mathcal{A}=(A_0, \im A)\in  C^2_0(\Omega)$ and $\Phi\in L^{\infty}(\Omega)$. Furthermore, let $\theta \in \mathbb{S}^{1}, h\in  H^2(\R)$ and $\varphi\in H^3(\R^{2})$. Then  for any $\lambda > 0$, there exists a solution 
\begin{equation} 
\label{eq: CGO sol 2 }
    u^{-}_{\lambda} \in C^{1}([0,T], L^{2}(\Omega))\cap C([0,T], H^{1}(\Omega))
\end{equation}
of the adjoint equation \eqref{eq: adjoint eq}
having the form 
\begin{equation}\label{eq: form of CGO sol 2}
u^{-}_{\lambda}(X,t) = \varphi(x + t\theta)h(y)A^{-}(X,t)e^{\im\lambda(x\cdot \theta + t)} + r_{\lambda}^{-}(X,t).
\end{equation}
Here, $A^{-}(X,t)$ is given by 
\begin{equation}\label{eq: form of A^(-)}
A^{-}(X,t) = \exp\left( \int_{0}^{t}{(1,\theta)\cdot \mathcal{A}(x+s\theta,y)\,ds}\right)
\end{equation}
and $r_{\lambda}^{-}\in C([0,T];H^1_0(\Omega))\cap C^1([0,T];L^2(\Omega))$ satisfies
\begin{equation}\label{eq: condition on  r^(-) }  
   r_{\lambda}^{-}(T) = \partial_{t}r_{\lambda}^{-}(T) =  0   \text{ in } \Omega.
\end{equation}
Moreover, there exists a constant $C{(\Omega, T, \|\Phi\|_{L^{\infty}(\Omega)},\|A\|_{W^{2,\infty}(\Omega)}})  > 0$ such that
\begin{equation}\label{Energy Estimate  for r^(-)}
\begin{split}
 &\|\partial_{t}  r_{\lambda}^{-}\|_{L^{\infty}(0,T;L^{2}(\Omega))} +\lambda \|r_{\lambda}^{-}\|_{L^{\infty}(0,T;L^{2}(\Omega))} + \|\nabla r_{\lambda}^{-}\|_{L^{\infty}(0,T;L^{2}(\Omega))} \\
 &\quad \leq C \|\varphi\|_{H^{3}(\R^{2})}\|h\|_{H^{2}(\R)}.
 \end{split}
\end{equation}
\end{lemma}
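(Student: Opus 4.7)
The plan is to mirror the proof of Lemma \ref{lemma: geom opt sol +}, making use of the relation $L^*_{\mathcal{A},\Phi}=L_{\mathcal{A}_r,\Phi}$ together with a time-reversal argument. As a first step, I would plug the ansatz \eqref{eq: form of CGO sol 2} into $L^*_{\mathcal{A},\Phi}u^-_\lambda=0$, so that it suffices to construct $r^-_\lambda\in C([0,T];H^1_0(\Omega))\cap C^1([0,T];L^2(\Omega))$ solving
\begin{equation*}
\begin{cases}
L^*_{\mathcal{A},\Phi} r_\lambda^- = g^- & \text{in }\Omega_T,\\
r_\lambda^- = 0 & \text{on }(\partial\Omega)_T,\\
r_\lambda^-(T)=\partial_t r_\lambda^-(T)=0 & \text{in }\Omega,
\end{cases}
\end{equation*}
with $g^-(X,t)=-L^*_{\mathcal{A},\Phi}\bigl[\varphi(x+t\theta)h(y)A^-(X,t)e^{\im\lambda(x\cdot\theta+t)}\bigr]$. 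A computation identical in spirit to the one in Claim \ref{claim r plus inhom} shows, via integration by parts along the characteristic $s\mapsto x+s\theta$, that $A^-$ satisfies the transport identity
\begin{equation*}
(\partial_t-\theta\cdot\nabla_x)A^-(X,t)=(A_0+\im\theta\cdot A)(X)\,A^-(X,t).
\end{equation*}
Expanding $L^*_{\mathcal{A},\Phi}(e^{\im\lambda(x\cdot\theta+t)}f)$ with $f=\varphi(x+t\theta)h(y)A^-(X,t)$ exactly as in \eqref{eqn for A^{+}(x,y,t)} and using this transport identity, the leading $\lambda$-order term cancels and one obtains the factorization $g^-=-e^{\im\lambda(x\cdot\theta+t)}g_0^-$ with $g_0^-\in L^2(\Omega_T)$.

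To establish existence of $r^-_\lambda$ and the basic energy bound, I would time-reverse. The key structural observation is that under $\tilde t=T-t$ one has, since the potentials are time-independent,
\begin{equation*}
L^*_{\mathcal{A},\Phi}=(\partial_t-A_0)^2-(\nabla_x+\im A)^2-\partial_y^2+\Phi\;\longmapsto\;(\partial_{\tilde t}+A_0)^2-(\nabla_x+\im A)^2-\partial_y^2+\Phi=L_{\mathcal{A},\Phi}.
\end{equation*}
Hence $\tilde r(X,\tilde t):= r^-_\lambda(X,T-\tilde t)$ solves the forward IBVP $L_{\mathcal{A},\Phi}\tilde r=g^-(X,T-\tilde t)$ with vanishing initial data and zero Dirichlet boundary values. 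Step 1 of the proof of Theorem \ref{thm: Existence} then provides existence, uniqueness, and the energy estimate $\|\partial_t r^-_\lambda\|_{L^\infty(0,T;L^2(\Omega))}+\|\nabla r^-_\lambda\|_{L^\infty(0,T;L^2(\Omega))}\lesssim\|g^-\|_{L^2(\Omega_T)}$ after pulling back.

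For the sharper $O(\lambda^{-1})$ bound on $\|r^-_\lambda\|_{L^\infty(0,T;L^2(\Omega))}$, I would mirror Claim \ref{claim: properties of R plus} using the \emph{backward} time integral
\begin{equation*}
R_\lambda^-(X,t):=-\int_t^T r^-_\lambda(X,s)\,ds,
\end{equation*}
so that $\partial_t R^-_\lambda=r^-_\lambda$ and $R^-_\lambda(T)=\partial_t R^-_\lambda(T)=0$. A weak-formulation calculation, analogous to the parabolic regularization argument in the proof of Claim \ref{claim: properties of R plus}\ref{PDE for time integral r plus}, shows that $R^-_\lambda$ solves the backward problem with source $F^-_1(X,t)=-\int_t^T g^-(X,\tau)\,d\tau$. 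Writing $e^{\im\lambda(x\cdot\theta+\tau)}=(\im\lambda)^{-1}\partial_\tau e^{\im\lambda(x\cdot\theta+\tau)}$ and integrating by parts in $\tau$ (exploiting that $\mathcal{A},\Phi$ are time-independent, so that only $\partial_\tau g^-_0$ produces extra factors, which are bounded by a second-order differential operator applied to $\varphi(x+\tau\theta)h(y)A^-$), one obtains $\|F^-_1\|_{L^2(\Omega_T)}\lesssim\lambda^{-1}\|\varphi\|_{H^3(\R^2)}\|h\|_{H^2(\R)}$. A further time reversal reduces the backward problem for $R^-_\lambda$ to a forward one, and the energy estimate then yields $\|r^-_\lambda\|_{L^\infty(0,T;L^2(\Omega))}=\|\partial_t R^-_\lambda\|_{L^\infty(0,T;L^2(\Omega))}\lesssim\lambda^{-1}\|\varphi\|_{H^3(\R^2)}\|h\|_{H^2(\R)}$. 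Combining this with the direct energy estimate on $r^-_\lambda$ gives \eqref{Energy Estimate  for r^(-)}.

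The main obstacle is bookkeeping for the time-reversal: one has to verify carefully that $L^*_{\mathcal{A},\Phi}$ really transforms into $L_{\mathcal{A},\Phi}$ (this is where the damping sign flip in $\mathcal{A}_r=(-A_0,\im A)$ is exactly compensated by $\partial_t\to-\partial_{\tilde t}$), and that the backward integral $R^-_\lambda$ retains homogeneous data at $t=T$ so that the energy identities from Theorem \ref{thm: Existence} can be invoked without modification.
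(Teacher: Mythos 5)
Your proposal is correct and follows essentially the same route as the paper's proof: reduce to a backward problem for $r_\lambda^-$, use the cancellation of the leading $\lambda$-order term coming from the transport identity for $A^-$, time-reverse $L^*_{\mathcal{A},\Phi}$ into $L_{\mathcal{A},\Phi}$ to invoke the forward well-posedness, and then obtain the $\mathcal{O}(\lambda^{-1})$ bound on $\|r_\lambda^-\|_{L^\infty(0,T;L^2)}$ via the backward time-integral $R_\lambda^-$ and an integration by parts in $\tau$ exploiting the time-independence of the coefficients. The only cosmetic difference is that the paper shortcuts the transport/factorization step by observing $A^-(X,t)=A_r^+(X,t)$ with $\mathcal{A}_r=(-A_0,\im A)$ and then invoking Claim \ref{claim r plus inhom} directly via $L^*_{\mathcal{A},\Phi}=L_{\mathcal{A}_r,\Phi}$, whereas you re-derive the transport identity $(\partial_t-\theta\cdot\nabla_x)A^-=(A_0+\im\theta\cdot A)A^-$ from scratch; your identity and the cancellation it produces are correct, and likewise your time-reversal calculation $(\partial_t-A_0)^2\mapsto(\partial_{\tilde t}+A_0)^2$ is exactly the point the paper relies on.
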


\begin{proof}

 As in the proof of Lemma \ref{lemma: geom opt sol +}, it is enough to prove the existence of a function $r_{\lambda}^{-}\in C([0,T];H^1(\Omega))\cap C^1([0,T];L^2(\Omega))$ solving  
\begin{equation}
    \label{eq: PDE for r minus}
\begin{cases}
L_{\mathcal{A},\Phi}^* r_{\lambda}^{-} = \tilde{g} \hspace{1.75cm} \text{in }  \Omega_{T}    , \\
r_{\lambda}^{-} = 0 \hspace{2.5cm} \text{on } \partial \Omega_{T}, \\
r_{\lambda}^{-}(T) = \partial_{t}r_{\lambda}^{-}(T) = 0 \quad \text{in } \Omega,
\end{cases}
\end{equation}
and satisfying the estimate \eqref{Energy Estimate  for r^(-)}, where  $\Tilde{g}\colon\Omega_T\to \C$ is given by
\begin{equation}
\label{eq: def of tilde(g)}
    \tilde{g}(X,t)= -L^{*}_{\mathcal{A},\Phi}\big[ \varphi(x + t\theta)h(y)A^{-}(X,t)e^{\im\lambda( x \cdot \theta + t )}\big]\in L^2(\Omega_T).
\end{equation}
Next, let us observe that using the notation $\mathcal{A}_r=(-A_0,\im A)$, we see that
\begin{equation}
\label{eq: potential -}
    A^{-}(X,t)=A_r^+(X,t),
\end{equation}
where $A_r^+$ is given by \eqref{eq: form of A^(+)} with $\mathcal{A}$ replaced by $\mathcal{A}_r$. Thus, by \eqref{eq: potential -}, \eqref{eq: adjoint op} and Claim \ref{claim r plus inhom} it is immediate that we can write
\begin{equation}
\label{eq: equiv form of tilde(g)}
\begin{split}
    \tilde{g}(X,t) =-e^{\im\lambda( x\cdot \theta + t )}\tilde{g}_{0}(X,t),
\end{split}
\end{equation}
where $\tilde{g}_0$ is given by
\begin{equation}
\label{eq: def of tilde(g)0}
    \tilde{g}_0(X,t)\vcentcolon =L^{*}_{\mathcal{A},\Phi}\left(\varphi(x + t\theta)h(y)A^{-}(X,t)\right)\in L^2(\Omega_T).
\end{equation}
On the other hand, if $F\in L^2(\Omega_T)$, then $v$ solves
\[
\begin{cases}
L^{*}_{\mathcal{A}, \Phi}v = F &\text{ in }  \Omega_{T}    , \\
v = 0  &\text{ on } \partial \Omega_{T}, \\
v(T) = \partial_{t} v(T) = 0 & \text{ in } \Omega,
\end{cases}
\]
if and only if $u(X,t)=v(X,T-t)$ solves
\begin{equation}
\label{eq: time reversed eq}
\begin{cases}
L_{\mathcal{A}, \Phi}u = F^* &\text{ in }  \Omega_{T}    , \\
u= 0  &\text{ on } \partial \Omega_{T}, \\
u(0)= \partial_{t} u(0)= 0 &\text{ in } \Omega,
\end{cases}
\end{equation}
where $F^*(X,t)=F(X,T-t)$.
 
Recall from Step 1 of the proof of Theorem \ref{thm: Existence} that for any $F \in L^2(\Omega_{T})$ the problem \eqref{eq: time reversed eq} has a unique solution  $u \in C([0,T]; H^1_0(\Omega)) \cap C^1([0,T], L^2(\Omega))$ and it satisfies the energy estimate
\[
\| \partial_t u \|_{L^\infty(0,T; L^2(\Omega))} + \| \nabla u \|_{L^\infty(0,T; L^2(\Omega))} \leq C \| F \|_{L^2(\Omega_{T})}.
\]

Thus, by \eqref{eq: def of tilde(g)}, we immediately get the existence of $r^{-}_\lambda\in C([0,T];H^1_0(\Omega))\cap C^1([0,T];L^2(\Omega))$ solving \eqref{eq: PDE for r minus}. Next, we introduce the time integral transform 
\begin{equation}
\label{eq: time integral r minus}
  R_{\lambda}^-(X,t)  = \int_t^T r_{\lambda}^-(X,s)  ds
\end{equation}
of $r^-_\lambda$. Let us note that one has
\[
    R^{-}_\lambda(X,T-t)=\int_0^t r^{-}_\lambda(X,T-\tau)\,d\tau
\]
and by the above equivalence the function $r_\lambda^{-}(X,T-t)$ solves \eqref{eq: time reversed eq} with $F=\widetilde{g}$. So, from Claim \ref{claim: properties of R plus}, we deduce that $R^-_\lambda$ has the following properties:
    \begin{enumerate}[(i)]
        \item\label{PDE for time integral r minus} $R^-_\lambda$ solves the PDE 
\begin{equation}
\label{eq: PDE for R minus}
\begin{split}
    \begin{cases}
 L^{*}_{\mathcal{A},\Phi}R^-_\lambda  = \int_{t}^{T}\tilde{g}(\tau)\, d\tau & \text{in } \Omega_T,\\
R_{\lambda}^{-} = 0 & \text{on } (\partial\Omega)_T,\\
R_{\lambda}^{-}(T)  = \partial_{t}R_{\lambda}^{-}(T)=0 &  \text{in } \Omega
\end{cases}
\end{split}
\end{equation}
 \item\label{estimate for source of R minus} and the function $F_2^\lambda\vcentcolon =\int_{t}^{T}\tilde{g}(\tau)\, d\tau$ satisfies the estimate
        \begin{equation} 
\label{eq: estimate F2}
    \norm{F_{2}^{\lambda}}_{L^{2}(\Omega_T)} \leq  \dfrac{C}{\lambda}\norm{\varphi}_{H^{3}(\R^2)}  \norm{h}_{H^{2}(\R)} .
\end{equation}
    \end{enumerate}

We can now finish the proof in exactly the same way as Lemma \ref{lemma: geom opt sol +}.

    
\end{proof}

\section{Stability estimate}
\label{sec: Stability estimate}

In this section, we finally establish the stability estimate asserted in Theorem \ref{main theorem}. For later convenience, we introduce here some notation:
\begin{itemize}
    \item The norm $\|\cdot\|_{D^{3/2}((\partial\Omega)_T)\to L^2((\partial\Omega)_T)}$ is denote by $\|\cdot\|_*$.
    \item For any $C\subset \R^2$ and $\rho>0$, we define $C^\rho=C+\overline{B_{\rho}(0,\R^2)}$.
    \item For two given vector potentials $\mathcal{A}^1, \mathcal{A}^2$ and external potentials $\Phi_1,\Phi_2$, we set $\mathcal{A}^{21}=\mathcal{A}^{2}-\mathcal{A}^{1}$ and $\Phi_{21}=\Phi_{2}-\Phi_{1}$.
    \item For any pair $(\mathcal{A},\Phi)\in L^{\infty}(\Omega)\times L^{\infty}(\Omega)$ we denote by $A^{\pm}(X,t)$ the functions
    \begin{equation}
    \label{eq: def A plus}
      A^{+}(X,t)=\exp\left(-\int_0^t(1,-\theta)\cdot \mathcal{A}(x+s\theta,y)\,ds\right)
    \end{equation}
    and 
    \begin{equation}
    \label{eq: def A minus}
        A^{-}(X,t)=\exp\left(\int_0^t(1,\theta)\cdot \mathcal{A}(x+s\theta,y)\,ds\right)
    \end{equation}
    (see eq. \eqref{eq: form of A^(+)} and \eqref{eq: form of A^(-)}). In particular, we also use these formulas for the difference of two potentials $\mathcal{A}^{21}=\mathcal{A}^2-\mathcal{A}^1$ and in this case the resulting functions in  \eqref{eq: def A plus} and  \eqref{eq: def A minus} are denoted by $A^{21,+}$ and $A^{21,-}$, respectively.
    \item If $v\colon \R^2\to \R^2$ is a given vector field, then its rotation is defined by
    \begin{equation}
        \nabla\wedge v\vcentcolon = \partial_1 v_2-\partial_2 v_1.
    \end{equation}
\end{itemize}

\subsection{Pointwise estimate for the Radon transform of \texorpdfstring{$\mathcal{A}^{21}$}{A21}}
\label{subsec: pointwise estimate radon A21}

The main goal of this section is to provide a pointwise estimate of the form
\begin{equation}
\label{eq: pointwise radon trafo est}
    \left|\int_\R (1,-\theta)\cdot \mathcal{A}^{21}(x-s\theta,y)\,ds\right|\lesssim \lambda^\beta \|\Lambda_{\mathcal{A}^2,\Phi_2}-\Lambda_{\mathcal{A}^1,\Phi_1}\|_*+\lambda^{-\gamma}
\end{equation}
for some constants $\beta,\gamma>0$ and $\lambda\geq 1$ (Lemma \ref{lemma: integral estimate of A}). We call \eqref{eq: pointwise radon trafo est} pointwise estimate for the Radon transform of $\mathcal{A}^{21}$ as in the special case $x=\rho \theta^\perp$ the left hand side coincides with the Radon transform of $\mathcal{A}^{21}$ up to a scaling factor. 

To achieve this goal we first prove the following auxiliary lemma, which relies on an $H^{2,2}(\Omega_T)$ regularity result for the geometric optics solutions $u^{\pm}_\lambda$ that is established also in the proof below (Claim \ref{claim: regularity geometric optics}).

\begin{lemma}\label{intid2 2}
    Let $\Omega=\omega\times\R$ be an infinite waveguide with $\omega\subset\R^2$ being a smoothly bounded domain and $T>0$. Suppose that $(\mathcal{A}^{j},\Phi_j)\in \mathscr{A}(R)$, $j=1,2$, for some $R>0$. Then there exists a constant $C=C(\omega,T,R) > 0$ such that for any $\theta \in \mathbb{S}^1$, $h\in H^2(\R;\R)$ and $\varphi \in H^3(\R^2;\R)\cap C^{1,1}((\partial\omega)^T)$ satisfying $\varphi=\nabla_x \varphi =0 $ on $\overline{\omega}\cup(\overline{\omega}+T\theta)$, there holds
  \small{\begin{align}
  \label{eq: integral estimate}
      & \abs{\int_{\R^{3}_T}(1,-\theta)\cdot \mathcal{A}^{21}(x - t\theta,y){\varphi^{2}(x)h^{2}(y)\exp\left(-\int_0^t (1,-\theta)\cdot\mathcal{A}^{21}(x-\rho\theta,y)\,d\rho\right)\,dXdt}} \\ 
      &\quad\leq C ( \lambda^{2} \norm{\Lambda_{\mathcal{A}^{2},\Phi_{2}}-\Lambda_{\mathcal{A}^{1},\Phi_{1}}}_*+ \lambda^{-1})\norm{\varphi}^{2}_{H^{3}(\R^{2})}\|h\|^{2}_{H^{2}(\R)}
  \end{align}}
for all $\lambda\geq 1$. 
\end{lemma}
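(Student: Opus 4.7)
The plan follows the standard geometric optics / integral identity scheme for inverse problems for wave equations. Take $u^+_\lambda$ to be the geometric optics solution for $(\mathcal{A}^2,\Phi_2)$ given by Lemma \ref{lemma: geom opt sol +} and $u^-_\lambda$ the one for $(\mathcal{A}^1,\Phi_1)$ given by Lemma \ref{lemma: geom opt sol -}. The conditions $\varphi=\nabla_x\varphi=0$ on $\overline{\omega}\cup(\overline{\omega}+T\theta)$, combined with the endpoint data $r^+_\lambda(0)=\partial_t r^+_\lambda(0)=0$ and $r^-_\lambda(T)=\partial_t r^-_\lambda(T)=0$, imply $u^+_\lambda(\cdot,0)=\partial_t u^+_\lambda(\cdot,0)=0$ and $u^-_\lambda(\cdot,T)=\partial_t u^-_\lambda(\cdot,T)=0$ in $\Omega$. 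Let $f:=u^+_\lambda|_{(\partial\Omega)_T}$ and let $v$ solve $L_{\mathcal{A}^1,\Phi_1}v=0$ in $\Omega_T$ with boundary data $f$ and homogeneous Cauchy data (Theorem \ref{thm: Existence}). Then $w:=v-u^+_\lambda$ satisfies $L_{\mathcal{A}^1,\Phi_1}w=(L_{\mathcal{A}^2,\Phi_2}-L_{\mathcal{A}^1,\Phi_1})u^+_\lambda$ with homogeneous Dirichlet and Cauchy data. Multiplying by $\overline{u^-_\lambda}$, integrating over $\Omega_T$, and carrying out the integration by parts -- all boundary terms cancel by the above initial/final data, by $w|_{(\partial\Omega)_T}=0$, and by the fact that $A^j\in C^2_0(\Omega)$ forces $A^j\cdot\nu=0$ on $\partial\Omega$ -- yields the identity
\[
\int_{\Omega_T}(L_{\mathcal{A}^2,\Phi_2}-L_{\mathcal{A}^1,\Phi_1})u^+_\lambda\cdot\overline{u^-_\lambda}\,dXdt = \int_{(\partial\Omega)_T}(\Lambda_{\mathcal{A}^2,\Phi_2}-\Lambda_{\mathcal{A}^1,\Phi_1})f\cdot\overline{u^-_\lambda}\,dSdt.
\]

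\textbf{Bounding the right-hand side.} By H\"older's inequality the right-hand side is at most $\|\Lambda_{\mathcal{A}^2,\Phi_2}-\Lambda_{\mathcal{A}^1,\Phi_1}\|_*\,\|f\|_{D^{3/2}((\partial\Omega)_T)}\,\|u^-_\lambda\|_{L^2((\partial\Omega)_T)}$. Sobolev embedding applied to the leading amplitude of $u^-_\lambda$ together with \eqref{Energy Estimate  for r^(-)} gives $\|u^-_\lambda\|_{L^2((\partial\Omega)_T)}\leq C\|\varphi\|_{H^3}\|h\|_{H^2}$ uniformly in $\lambda$. The main obstacle is the quantitative control $\|f\|_{D^{3/2}((\partial\Omega)_T)}\lesssim \lambda^2\|\varphi\|_{H^3}\|h\|_{H^2}$. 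I would obtain this as a separate claim that $u^+_\lambda\in H^{2,2}(\Omega_T)$ with $\|u^+_\lambda\|_{H^{2,2}(\Omega_T)}\lesssim\lambda^2\|\varphi\|_{H^3}\|h\|_{H^2}$: for the explicit leading term the scaling $\lambda^2$ is immediate from two derivatives striking the phase $e^{\im\lambda(x\cdot\theta+t)}$, while for $r^+_\lambda$ one differentiates \eqref{eq: PDE for r plus} in $t$ (legitimate because the potentials are time-independent) and reapplies the energy estimate from Step~1 of Theorem \ref{thm: Existence} to bound $\partial_t^2 r^+_\lambda\in L^2(\Omega_T)$, followed by elliptic regularity for $-\Delta$ with zero Dirichlet data on $\Omega$ to bound $\Delta r^+_\lambda\in L^2(\Omega_T)$. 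This is where the $C^2_0$ hypothesis on $\mathcal{A}$ is decisive. Trace continuity $H^{2,2}(\Omega_T)\hookrightarrow D^{3/2}((\partial\Omega)_T)$ then completes the estimate, yielding a right-hand side of size $C\lambda^2\|\Lambda_{\mathcal{A}^2,\Phi_2}-\Lambda_{\mathcal{A}^1,\Phi_1}\|_*\|\varphi\|^2_{H^3}\|h\|^2_{H^2}$.

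\textbf{Expanding the left-hand side.} Since $L_{\mathcal{A}^2,\Phi_2}-L_{\mathcal{A}^1,\Phi_1}=2A_0^{21}\partial_t-2\im A^{21}\cdot\nabla_x+(\tilde\Phi_2-\tilde\Phi_1)$ is first order (with $\tilde\Phi_j$ as in \eqref{eq: auxiliary fcts}), the leading $O(\lambda)$ contribution appears when its derivatives hit the phase of $u^+_\lambda$: the resulting coefficient is $2A_0^{21}(\im\lambda)-2\im A^{21}\cdot(\im\lambda\theta)=2\im\lambda(A_0^{21}-\im\theta\cdot A^{21})=2\im\lambda\,(1,-\theta)\cdot\mathcal{A}^{21}$. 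Pairing with the leading term $\varphi(x+t\theta)h(y)\overline{A^{1,-}(X,t)}e^{-\im\lambda(x\cdot\theta+t)}$ of $\overline{u^-_\lambda}$, the two phases cancel and by \eqref{eq: form of A^(+)}--\eqref{eq: form of A^(-)} the amplitudes combine as
\[
A^{2,+}(X,t)\,\overline{A^{1,-}(X,t)}=\exp\Big(-\int_0^t (1,-\theta)\cdot\mathcal{A}^{21}(x+s\theta,y)\,ds\Big).
\]
The change of variables $x\mapsto x-t\theta$ (unit Jacobian) together with the substitution $\rho=t-s$ converts this contribution to $2\im\lambda\cdot I_\lambda$, where $I_\lambda$ denotes the integral on the left-hand side of \eqref{eq: integral estimate}. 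All other terms -- the zeroth-order part of the difference operator acting on $u^+_\lambda$, the first-order derivatives striking the amplitude instead of the phase, and every cross term involving the remainders $r^\pm_\lambda$ -- are $O(1)$ in $\lambda$ and uniformly bounded by $C\|\varphi\|^2_{H^3}\|h\|^2_{H^2}$, using the uniform bounds $(\mathcal{A}^j,\Phi_j)\in\mathscr{A}(R)$, the explicit formulas for $A^\pm$, and the $L^2$-decay $\|r^\pm_\lambda\|_{L^2(\Omega_T)}\lesssim \lambda^{-1}\|\varphi\|_{H^3}\|h\|_{H^2}$ provided by \eqref{eq: estimate r plus geom opt} and \eqref{Energy Estimate  for r^(-)}.

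\textbf{Conclusion.} Combining the two previous steps gives
\[
|2\lambda\,I_\lambda|\leq C\big(\lambda^2\,\|\Lambda_{\mathcal{A}^2,\Phi_2}-\Lambda_{\mathcal{A}^1,\Phi_1}\|_*+1\big)\|\varphi\|^2_{H^3}\|h\|^2_{H^2},
\]
and dividing by $2\lambda$ (legitimate as $\lambda\geq 1$) yields the desired bound \eqref{eq: integral estimate}. The principal technical difficulty is the $H^{2,2}$ regularity of $u^+_\lambda$ together with the correct $\lambda^2$ scaling of its norm; once this and the associated trace bound are established, everything else is a careful accounting of the leading and subleading contributions in the integral identity.
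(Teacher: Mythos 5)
Your proof follows the same overall strategy as the paper: construct the geometric optics solutions from Lemmas \ref{lemma: geom opt sol +} and \ref{lemma: geom opt sol -}, form the integral identity by pairing $(L_{\mathcal{A}^2,\Phi_2}-L_{\mathcal{A}^1,\Phi_1})u^+_\lambda$ against $\overline{u^-_\lambda}$, bound the boundary term via the DN map and trace estimates, extract the $O(\lambda)$ leading term with amplitude $A^{2,+}\overline{A^{1,-}}=\exp(-\int_0^t(1,-\theta)\cdot\mathcal{A}^{21}\,ds)$, change variables, and divide by $\lambda$. That part is in full agreement with the paper, including the computation of the leading coefficient $2\im\lambda\,(1,-\theta)\cdot\mathcal{A}^{21}$ and the $O(1)$ control of all remainder terms using \eqref{eq: estimate r plus geom opt} and \eqref{Energy Estimate  for r^(-)}.

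The one genuinely different ingredient is how you obtain the $H^{2,2}(\Omega_T)$ regularity of the geometric optics solutions, which is the crux of making the integral identity and the $D^{3/2}$-trace bound rigorous. You propose to differentiate the equation for $r^+_\lambda$ in $t$ (using the time-independence of $\mathcal{A},\Phi$), reapply the Step 1 energy estimate to bound $\partial_t^2 r^+_\lambda$, and then recover $\Delta r^+_\lambda$ by elliptic regularity with zero Dirichlet data. The paper instead performs the exponential conjugation $\mathcal{R}^+_{2,\lambda}=e^{A^2_0 t}r^+_{2,\lambda}$ to convert $(\partial_t+A_0^2)^2$ into $\partial_t^2$, so that the conjugated equation no longer has a first-order damping term and the $H^{2,2}$ result of Lions--Magenes applies after a suitable generalization. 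Both routes are legitimate, and your scaling $\|r^+_\lambda\|_{H^{2,2}}\lesssim\lambda$ and $\|u^+_\lambda\|_{H^{2,2}}\lesssim\lambda^2$ is correct (the time-differentiated source $\partial_t g$ is $O(\lambda)$ in $L^2$, and the nonzero second initial datum $\partial_t^2 r^+_\lambda(0)=g(\cdot,0)$ is $O(1)$). Two small gaps remain in your version: (i) you only state the $H^{2,2}$ regularity for $u^+_\lambda$, but the integration by parts in the identity also requires $u^-_\lambda\in H^{2,2}(\Omega_T)$ as a strong solution of the adjoint problem (the paper proves both in Claim \ref{claim: regularity geometric optics}); the same differentiation/elliptic argument applies after time reversal, so you should say so; (ii) differentiating a weak solution in $t$ and asserting the differentiated function is again a weak solution is a formal step — you should either approximate (as in the paper's appeal to the parabolic regularization) or argue by a priori estimates on smooth approximations. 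Finally, your observation that $\|u^-_\lambda\|_{L^2((\partial\Omega)_T)}$ is $\lambda$-independent (since $r^-_\lambda|_{(\partial\Omega)_T}=0$ and the phase has modulus one) is sharper than the paper's trace bound through $H^1(\Omega_T)$, and would yield $\lambda^2$ in \eqref{1.11} rather than $\lambda^3$; this strengthens the final inequality but of course still implies \eqref{eq: integral estimate}.
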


\begin{proof}
    For any $\lambda>0$, let us denote by $u^{+}_{2,\lambda}$, $u^{-}_{1,\lambda}$ the geometric optics solutions constructed in Lemma \ref{lemma: geom opt sol +} corresponding to the potentials $\mathcal{A}=\mathcal{A}^2$, $\Phi=\Phi_2$ and Lemma \ref{lemma: geom opt sol -}  corresponding to the potentials $\mathcal{A}=\mathcal{A}^1$, $\Phi=\Phi_1$, respectively. We use below the following assertion.
\begin{claim}
\label{claim: regularity geometric optics}
    For any $\lambda>0$, one has $u^{+}_{2,\lambda},u^{-}_{1,\lambda}\in H^{2,2}(\Omega_T)$ and thus they are strong solutions of the corresponding PDEs.
\end{claim}
\begin{proof}
    We only give the proof of the assertion for $u^{+}_{2,\lambda}$ as the one for $u^{-}_{1,\lambda}$ is similar. First of all, notice that we have
    \[
        \varphi(x+t\theta)h(y)A^{2,+}(X,t)e^{\im \lambda(x\cdot\theta +t)}\in H^{2,2}(\Omega_T)
    \]
    and so it is enough to prove that $r_{2,\lambda}^+\in H^{2,2}(\Omega_T)$. The later function solves
    \begin{equation}
    \label{eq: r plus int id}
    \begin{cases}
L_{\mathcal{A}^2,\Phi_2}r_{2,\lambda}^{+} = g & \text{in } \Omega_T,\\
r_{2,\lambda}^{+} = 0 & \text{on } (\partial\Omega)_T,\\
r_{2,\lambda}^{+}(0)  = \partial_{t}r_{\lambda}^{+}(0)=0 &  \text{in } \Omega,
\end{cases}
    \end{equation}
    where 
    \[
        g(X,t)= -L_{\mathcal{A}^2,\Phi_2}\big[ \varphi(x + t\theta)h(y)A^{2,+}(X,t)e^{\im\lambda( x \cdot \theta + t )}\big]\in H^1(0,T;L^2(\Omega)).
    \] 
    Let us introduce the function
    \[
        \mathcal{R}_{2,\lambda}^+\vcentcolon = e^{A^2_0 t}r_{2,\lambda}^+\in C([0,T];H^1_0(\Omega))\cap C^1([0,T];L^2(\Omega))
    \]    and observe that by our regularity assumptions it is enough to show  $\mathcal{R}_{2,\lambda}^+\in H^{2,2}(\Omega_T)$. As explained below, the function $\mathcal{R}_{2,\lambda}^+$ solves the problem
    \begin{equation}
    \label{eq: PDE for mathcal R plus}
     \begin{cases}
\mathcal{L}_{\mathcal{A}^2,\Phi_2}\mathcal{R}_{2,\lambda}^{+} = G & \text{in } \Omega_T,\\
\mathcal{R}_{2,\lambda}^{+} = 0 & \text{on } (\partial\Omega)_T,\\
\mathcal{R}_{2,\lambda}^{+}(0)  = \partial_{t}\mathcal{R}_{2,\lambda}^{+}(0)=0 &  \text{in } \Omega,
\end{cases}
    \end{equation}
    where
    \begin{equation}
    \label{eq: Help diff op}
    \begin{split}
        \mathcal{L}_{\mathcal{A}^2,\Phi_2}&=\partial_t^2-(\nabla_x+\im A^2)^2-\partial_y^2+2t\nabla_X A_0^2\cdot\nabla_X\\
        &\quad+t(\Delta_X A_0^2)-t^2 |\nabla_X A_0^2|^2  +2\im tA^2\cdot\nabla_x A_0^2+\Phi_2
    \end{split}
    \end{equation}
    and $G=e^{A^2_0 t}g$. In fact, if the function $r_{2,\lambda}^+$ is sufficiently smooth, then we may compute
    \begin{equation}
    \label{eq: time der identity}
    \begin{split}
    \partial_t^2\mathcal{R}_{2,\lambda}^+=e^{A^2_0 t}(\partial_t +A^2_0)^2 r_{2,\lambda}^+,
    \end{split}
    \end{equation}
    \begin{equation}
    \label{eq: gradient of mathcal R +}
    \begin{split}
         \nabla_x \mathcal{R}_{2,\lambda}^{+}&=t(\nabla_x A_0^2) \mathcal{R}_{2,\lambda}^{+}+e^{A_0^2 t}\nabla_x r_{2,\lambda}^+,
    \end{split}
    \end{equation}
    \begin{equation}
    \label{eq: y der of mathcal R +}
    \begin{split}
         \partial_y \mathcal{R}_{2,\lambda}^{+}&=t(\partial_y A_0^2) \mathcal{R}_{2,\lambda}^{+}+e^{A_0^2 t}\partial_y r_{2,\lambda}^+,
    \end{split}
    \end{equation}
    \begin{equation}
     \label{eq: laplacian of mathcal R +}
    \begin{split}
        \Delta_x  \mathcal{R}_{2,\lambda}^{+}&= t(\Delta_x A_0^2)\mathcal{R}_{2,\lambda}^+ +t \nabla_x A_0^2\cdot\nabla_x\mathcal{R}_{2,\lambda}^+ + te^{A_0^2 t}\nabla_x A_0^2\cdot\nabla_x r_{2,\lambda}^{+} +e^{A_0^2 t}\Delta_x r_{2,\lambda}^{+}\\
        &\overset{\eqref{eq: gradient of mathcal R +}}{=}t(\Delta_x A_0^2)\mathcal{R}_{2,\lambda}^+ -t^2 |\nabla_x A_0^2|^2 \mathcal{R}_{2,\lambda}^+ +2t \nabla_x A_0^2\cdot\nabla_x\mathcal{R}_{2,\lambda}^+ +e^{A_0^2 t}\Delta_x r_{2,\lambda}^{+}
    \end{split}
    \end{equation}
    and
    \begin{equation}
     \label{eq: y laplacian of mathcal R +}
    \begin{split}
        \partial_y^2  \mathcal{R}_{2,\lambda}^{+}&= t(\partial_y^2 A_0^2)\mathcal{R}_{2,\lambda}^+ -t^2 (\partial_y A_0^2)^2 \mathcal{R}_{2,\lambda}^+ +2t \partial_y A_0^2\cdot\partial_y\mathcal{R}_{2,\lambda}^+ +e^{A_0^2 t}\partial_y^2 r_{2,\lambda}^{+}
    \end{split}
    \end{equation}
    By \eqref{eq: gradient of mathcal R +}, \eqref{eq: laplacian of mathcal R +} and
    \[
        (\nabla_x+\im A^2)^2=\Delta_x +2\im A^2\cdot \nabla_x + \im \Div_x A^2-|A^2|^2,
    \]
    we get
    \begin{equation}
    \label{eq: magnetic part}
    \begin{split}
          (\nabla_x+\im A^2)^2 \mathcal{R}_{2,\lambda}^{+}&=e^{A_0^2 t}(\nabla_x+\im A^2)^2 r_{2,\lambda}^+ +2t\nabla_x A_0^2\cdot\nabla_x \mathcal{R}_{2,\lambda}^+\\
          &\quad+(t(\Delta_x A_0^2)-t^2 |\nabla_x A_0^2|^2  +2\im tA^2\cdot\nabla_x A_0^2)\mathcal{R}_{2,\lambda}^+.
    \end{split}
    \end{equation}
    Using \eqref{eq: Help diff op}, \eqref{eq: time der identity}, \eqref{eq: magnetic part}, \eqref{eq: y laplacian of mathcal R +} and \eqref{eq: r plus int id}, we deduce that $\mathcal{R}_{2,\lambda}^+$ satisfies
    \[
    \begin{split}
        \mathcal{L}_{\mathcal{A}^2,\Phi_2}\mathcal{R}_{2,\lambda}^+&=e^{A_0^2 t}L_{\mathcal{A}^2,\Phi_2}r_{2,\lambda}^+=G.
    \end{split}
    \]
    As $\mathcal{R}_{2,\lambda}^+(X,t)=0$ for $(X,t)\in (\partial\Omega)_T\cup (\Omega\times \{0\})$ and $\partial_t \mathcal{R}_{2,\lambda}^+(X,0)=0$ for $X\in \Omega$, $\mathcal{R}_{2,\lambda}^+$ indeed solves \eqref{eq: PDE for mathcal R plus} as asserted.
    
    The previous formal computations can be made rigorous by doing the previous calculations on the level of test functions and using integration by parts and approximation arguments as in the proof of Claim \ref{claim: properties of R plus}.
    
    Now, recalling our smoothness assumptions on the potentials $(\mathcal{A}^j,\Phi_j)$, we may generalize \cite[Theorem 2.1 and Lemma 2.1 in Chapter 4]{lions2012nonVol2} to our setting and conclude that $\mathcal{R}^{+}_{2,\lambda}\in H^{2,2}(\Omega_T)$ (see, for example, \cite[Section 9.6]{brezis2011functional} for the necessary elliptic regularity results). As pointed out above this ensures $u_{2,\lambda}^{+}\in H^{2,2}(\Omega_T)$.
\end{proof}
Moreover, let us set $f\vcentcolon =u^+_{2,\lambda}|_{(\partial\Omega)_T}$ and define $u_{1,\lambda}$ to be the unique solution of the problem
\begin{equation}
\label{eq: sol u1}
     \begin{cases}
     L_{\mathcal{A}^{1},\Phi_{1}}u = 0 & \text{ in } \Omega_T,\\
        u =f & \text{ on } (\partial\Omega)_T,\\
          u(0) = 0,\, \partial_{t}u(0) = 0 & \text{ on } \Omega.
    \end{cases}
\end{equation}
Indeed, the existence of $u_{1,\lambda}$ follows from Theorem \ref{thm: Existence} and the fact that
\[
    f(X,t)=\varphi(x+t\theta)h(y)A^{2,+}(X,t)e^{\im \lambda(x\cdot \theta+t)}\in D^{3/2}((\partial\Omega)_T)
\]
(see Lemma \ref{lemma: geom opt sol +}). The asserted inclusion can be seen as follows. First of all, the regularity $H^{3/2,3/2}((\partial\Omega)_T)$ follows by the assumptions on $\varphi,h$ and $\mathcal{A}$. Secondly, the function $f$ vanishes for $t=0$ by Lemma \ref{lemma: geom opt sol +} and $\varphi=0$ on $\partial\omega$. Thirdly, one has
\[
\begin{split}
     \partial_tf(X,t) &= \theta\cdot\nabla_x \varphi(x+t\theta)h(y)A^{2,+}(X,t)e^{\im\lambda(x\cdot\theta+t)}\\
     &\quad - \varphi(x+t\theta)h(y)(1,-\theta)\cdot \mathcal{A}^2(x+t\theta,y)A^{2,+}(X,t)e^{\im\lambda(x\cdot\theta+t)}\\
     &\quad +\im\lambda f(X,t)
\end{split}
\]
and hence we may estimate
\[
\begin{split}
   & \int_{(\partial\Omega)_T}\frac{|\partial_t f|^2}{t}\,dSdt\leq C \int_{(\partial\Omega)_T}\frac{|\nabla_x \varphi(x+t\theta)|^2+|\varphi(x+t\theta)|^2}{t}|h(y)|^2\,dSdt.
\end{split}
\]
As $\varphi=\nabla_x\varphi=0$ on $\partial\omega$ and $\varphi\in C^{1,1}((\partial\omega)^T)$, we see that the right hand side is finite and therefore $\partial_t f\in L^2((\partial\Omega)_T;t^{-1}dSdt)$. In a similar way, one can show that $\partial_\tau f, \partial_y f\in L^2((\partial\Omega)_T;t^{-1}dSdt)$. 

Furthermore, we define $u_\lambda\vcentcolon = u_{2,\lambda}^{+}-u_{1,\lambda}$. Using the alternative definition \eqref{eq: equivalent form L AP}, \eqref{eq: sol u1} and $L_{\mathcal{A}^2,\Phi_2}u_{2,\lambda}^+=0$ in $\Omega_T$, we may compute
\[
\begin{split}
    L_{\mathcal{A}^{1}, \Phi_{1}}u_{\lambda}&= L_{\mathcal{A}^{1}, \Phi_{1}}u_{2,\lambda}^{+}\\
    &=-(L_{\mathcal{A}^{2}, \Phi_{2}}-  L_{\mathcal{A}^{1}, \Phi_{1}})u_{2,\lambda}^{+}\\
    & = -(2\mathcal{A}^{21}\cdot(\partial_{t},-\nabla_{x}) + \tilde{\Phi}^{21})u_{2,\lambda}^{+}\in L^2(\Omega_T),
\end{split}
\]
where 
\begin{equation}
\label{eq: def phi int id 1}
\begin{split}
    \tilde{\Phi}^{21}\vcentcolon =-\im\Div_x A^{21} +|\mathcal{A}^2|^2-|\mathcal{A}^1|^2+\Phi_{21}\in L^{\infty}(\Omega).
\end{split}
\end{equation}
Therefore $u_\lambda$ solves 
\begin{equation}\label{IBP}
     \begin{cases}
     L_{\mathcal{A}^{1},\Phi_{1}}u =  -(2\mathcal{A}^{21}\cdot(\partial_{t},-\nabla_{x}) + \tilde{\Phi}^{21})u_{2,\lambda}^{+} & \text{ in } \Omega_T,\\
        u = 0 & \text{ on } (\partial\Omega)_T,\\
          u(0) = 0,\, \partial_{t}u(0) = 0 & \text{ on } \Omega.
    \end{cases}
\end{equation}
Note that for the initial conditions we used Lemma \ref{lemma: geom opt sol +} and the assumption $\varphi=\nabla_x\varphi=0$ on $\omega$.
On the one hand, using  \eqref{IBP}, $\partial_\nu u_\lambda\in L^2((\partial\Omega)_T)$, Lemma \ref{lemma: geom opt sol +} and \ref{lemma: geom opt sol -}, $\varphi=\nabla_x \varphi=0$ on $\overline{\omega}\cup(\overline{\omega}+T\theta)$, $\mathcal{A}^1,\mathcal{A}^2\in C^2_0(\Omega)$ and Claim \ref{claim: regularity geometric optics}, we may deduce using an integration by parts argument that there holds
\begin{equation}\label{inteqn}
\begin{split}
\int_{\Omega_{T}}\big(2\mathcal{A}^{21}\cdot(\partial_{t},-\nabla_{x})u_{2,\lambda}^{+} + \tilde{\Phi}^{21}u_{2,\lambda}^{+}\big)\overline{u_{1,\lambda}^{-}}\,dXdt 
&= \int_{(\partial\Omega)_T}{\partial_{\nu}u_\lambda\overline{u_{1,\lambda}^{-}}\,dSdt}. 
\end{split}
\end{equation}
On the other hand, noting that for any $\mathcal{A}=(A_0,\im A)$ and $(X,t)\in\Omega_T$ there holds
\[
    \overline{A^{-}(X,t)}=(-A)^+(X,t)=\exp\left(\int_0^t (1,-\theta)\cdot \mathcal{A}(x+s\theta,y)\,ds\right).
\]
 and by using Lemma \ref{lemma: geom opt sol +} and \ref{lemma: geom opt sol -}, we get
\begin{equation}
\label{eq: decomposition remainder integral identity 1}
\begin{split}
 [2\mathcal{A}^{21}\cdot(\partial_{t},-\nabla_{x})u_{2,\lambda}^{+}]\overline{u_{1,\lambda}^{-}}&=2\im\lambda\mathcal{A}^{21}\cdot(1,-\theta)\varphi^{2}(x+t\theta)h^{2}(y)  A^{21,+}(X,t) + P_{\lambda},
\end{split}
\end{equation}
where $P_\lambda$ satisfies
\begin{equation}
\label{eq: remainder estimate lemma int identity 1}
    \norm{ P_{\lambda}}_{L^{1}(\Omega_{T})} \leq C\|\mathcal{A}^{21}\|_{L^{\infty}(\Omega)}\norm{\varphi}^{2}_{H^{3}{(\R^{2})}}\norm{h}^{2}_{H^{2}{(\R)}}
\end{equation}
for some $C>0$ only depending on $\omega,T,R$.
Using \eqref{eq: def phi int id 1} and Lemmas \ref{lemma: geom opt sol +},\ref{lemma: geom opt sol -}, we obtain
\begin{equation}\label{1.1}
\begin{split}
   \bigg| \int_{\Omega_{T}}{\Tilde{\Phi}^{21}u_{2,\lambda}^+\overline{u_{1,\lambda}^{-}}\,dXdt}\bigg| & \leq C \|u_{2,\lambda}^+\|_{L^2(\Omega_T)}\|u_{1,\lambda}^{-}\|_{L^2(\Omega_T)}\\
   &\leq C \norm{\varphi}^{2}_{H^{3}{(\R^{2})}}\norm{h}^{2}_{H^{2}{(\R)}}
\end{split}
\end{equation}
where the constant $C>0$ only depends on $\Omega,T,R$.
Furthermore, there holds
\begin{equation}\label{1.11}
\begin{split}
&\bigg| \int_{(\partial\Omega)_T} {\partial_{\nu}u_{\lambda}\overline{u_{1,\lambda}^{-}}\,dSdt}  \bigg|  =\bigg|\int_{(\partial\Omega)_T}\left[\big( \Lambda_{\mathcal{A}^{2},\Phi_{2}}-\Lambda_{\mathcal{A}^{1},\Phi_{1}} \big)f\right]\overline{u_{1,\lambda}^{-}} \,dSdt \bigg| \\
& \quad\leq C\norm{\Lambda_{\mathcal{A}^{2},\Phi_{2}}-\Lambda_{\mathcal{A}^{1},\Phi_{1}}}_{*}\norm{f}_{D^{3/2}((\partial\Omega)_T)}\norm{u_{1,\lambda}^{-}}_{L^{2}{(\partial\Omega)_T}}  \\
& \quad\leq C \norm{\Lambda_{\mathcal{A}^{2},\Phi_{2}}-\Lambda_{\mathcal{A}^{1},\Phi_{1}}}_{*}\norm{\varphi(x+t\theta)h(y)A^{2,+}e^{\im \lambda(x\cdot \theta+t)}}_{H^{2}(\Omega_{T})}\\
&\quad\quad\times\norm{\varphi(x+t\theta)h(y)A^{1,-}e^{\im \lambda(x\cdot \theta+t)}}_{H^{1}(\Omega_{T})} \\
&\quad \leq C\lambda^{3}  \norm{\Lambda_{\mathcal{A}^{2},\Phi_{2}}-\Lambda_{\mathcal{A}^{1},\Phi_{1}}}_{*}  \norm{\varphi}^{2}_{H^{3}{(\R^{2})}}\norm{h}^{2}_{H^{2}{(\R)}}.
\end{split}
\end{equation}
for $\lambda \geq 1$. The first equality follows from the definition $u_\lambda$ and of the DN maps, in the second inequality we are using that the DN maps $\Lambda_{\mathcal{A}^j,\Phi_j}$, $j=1,2$, are bounded linear operators from $D^{3/2}((\partial\Omega)_T)$ to $L^2((\partial\Omega)_T)$ and in the third line we applied the the trace theorem and \cite[Chapter 4, Theorem 2.1]{lions2012nonVol2}.
By rearranging the terms in  \eqref{inteqn}, we get with the help of \eqref{eq: decomposition remainder integral identity 1}, \eqref{1.1} and \eqref{1.11} the bound
\small{
\begin{align}
\label{eq: prelim int id 1}
& \quad\,\,\,\,\left|\int_{\Omega_{T}} (1,-\theta)\cdot\mathcal{A}^{21}(X)\varphi^{2}(x+t\theta)h^{2}(y) \exp\left( -\int_{0}^{t}{(1,-\theta)\cdot \mathcal{A}^{21}(x+s\theta,y)\,ds} \right) \,dXdt\right| \\ 
& \quad \leq C \bigg( \lambda^{2}\norm{\Lambda_{\mathcal{A}^{2},\Phi_{2}}-\Lambda_{\mathcal{A}^{1},\Phi_{1}}} + \dfrac{1}{\lambda} \bigg)  \norm{\varphi}^{2}_{H^{3}{(\R^{2})}}\norm{h}^{2}_{H^{2}{(\R)}}.
\end{align}}
By the change of variables $x'=x+t\theta$ and $\rho=t-s$, we obtain
\small{\begin{align}
    &\int_{\Omega_{T}} (1,-\theta)\cdot\mathcal{A}^{21}(X)\varphi^{2}(x+t\theta)h^{2}(y) \exp\left( -\int_{0}^{t}{(1,-\theta)\cdot \mathcal{A}^{21}(x+s\theta,y)\,ds} \right) \,dXdt\\
    &\quad = \int_0^T\int_{\R}\int_{\omega+t\theta}(1,-\theta)\cdot\mathcal{A}^{21}(x'-t\theta,y)\varphi^{2}(x')h^{2}(y)\\
    &\quad \quad\times\exp\left( -\int_{0}^{t}{(1,-\theta)\cdot \mathcal{A}^{21}(x'-\rho\theta,y)\,d\rho} \right) \,dx'dydt.
\end{align}}
As $\mathcal{A}^j\in C^2_0(\Omega)$ for $j=1,2$ implies $\mathcal{A}^{21}(x-t\theta,y)=0$ for $x\notin \omega+t\theta$ and $y\in \R$, we may write
\small{
\begin{align}
    & \int_{\Omega_{T}} (1,-\theta)\cdot\mathcal{A}^{21}(X)\varphi^{2}(x+t\theta)h^{2}(y) \exp\left( -\int_{0}^{t}{(1,-\theta)\cdot \mathcal{A}^{21}(x+s\theta,y)\,ds} \right) \,dXdt\\
    &\quad = \int_{\R^3_T}(1,-\theta)\cdot\mathcal{A}^{21}(x-t\theta,y)\varphi^{2}(x)h^{2}(y)\exp\left( -\int_{0}^{t}{(1,-\theta)\cdot \mathcal{A}^{21}(x-\rho\theta,y)\,d\rho} \right) \,dXdt.
\end{align}}
Thus, from \eqref{eq: prelim int id 1} we deduce that there holds
 \small{\begin{align}
       &\abs{\int_{\R^3_T}(1,-\theta)\cdot\mathcal{A}^{21}(x-t\theta,y)\varphi^{2}(x)h^{2}(y) \exp\left( -\int_{0}^{t}{(1,-\theta)\cdot \mathcal{A}^{21}(x-\rho\theta,y)\,d\rho} \right) \,dXdt} \\ 
        &\quad\leq C \left( \lambda^{2} \norm{\Lambda_{\mathcal{A}^{2},\Phi_{2}}-\Lambda_{\mathcal{A}^{1},\Phi_{1}}}_* + \dfrac{1}{\lambda}\right)\norm{\varphi}^{2}_{H^{3}(\R^{2})}\|h\|^{2}_{H^{2}(\R)}
\end{align}}
for any $\lambda \geq 1$.
     This completes the proof of the lemma.
     \end{proof}
     
\begin{lemma}
\label{lemma: integral estimate of A}
    Let $\Omega=\omega\times\R$ be an infinite waveguide with $\omega\subset\R^2$ being a smoothly bounded domain, $T>\text{diam}(\omega)$ and let
    \begin{equation}
    \label{eq: def omega alpha}
        \omega_\alpha \vcentcolon = \{x\in \R^2\setminus\overline{\omega}\,;\,\dist(x,\omega)<\alpha\}
    \end{equation}
    for some $\alpha\in(0,\min\{1, (T-\text{diam}(\omega))/3\})$.
    Suppose that $(\mathcal{A}^{j},\Phi_j)\in \mathscr{A}(R)$, $j=1,2$, for some $R >0$ and there holds
    \begin{equation}
    \label{eq: smallness condition on vector potential}
        \|\mathcal{A}^1-\mathcal{A}^2\|_{L^{\infty}(\Omega)}<\frac{2\pi}{T}.
    \end{equation}
    Then there exists $C>0$ only depending on $\omega,T,R$ such that for all $0<\gamma\leq 1/11$ and $\beta>0$ satisfying $\beta\geq 2+10\gamma$, we have
    \[
      \left|\int_{\R} (1,-\theta)\cdot\mathcal{A}^{21}(x-s \theta,y)\, d s\right| \leq C\left(\lambda^{\beta}\|\Lambda_{\mathcal{A}^{2}, \Phi_{2}}-\Lambda_{\mathcal{A}^{1},\Phi_{1}}\|_* +\frac{1}{\lambda^{\gamma}}\right)
    \]
    for all $\theta \in \mathbb{S}^{1}$, $X=(x,y)\in \R^3$ and $\lambda\geq 1$.
\end{lemma}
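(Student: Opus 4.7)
The plan is to apply Lemma~\ref{intid2 2} with a carefully chosen pair $(\varphi_\eps, h_\eps)$ of mollifiers concentrated at a point $(x_0, y_0)$, rewrite the integrand in~\eqref{eq: integral estimate} as a total time derivative, and then optimize over $\eps = \lambda^{-\gamma}$. Setting $F(x,y,t) := \int_0^t (1,-\theta)\cdot\mathcal{A}^{21}(x-\rho\theta, y)\,d\rho$, one has $(1,-\theta)\cdot\mathcal{A}^{21}(x-t\theta, y)\,e^{-F} = -\partial_t e^{-F}$, so integrating in $t$ collapses the left-hand side of~\eqref{eq: integral estimate} to $\int_{\R^2}\varphi^2(x) h^2(y)(1 - e^{-F(x,y,T)})\,dxdy$.

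Since the quantity $\int_\R(1,-\theta)\cdot\mathcal{A}^{21}(x-s\theta,y)\,ds$ depends only on the line through $x$ in direction $\theta$ (the case where the line misses $\omega$ is trivial), I may freely shift $x$ along $\theta$ to a convenient $x_0$. Using the assumption $\alpha<(T-\text{diam}(\omega))/3$, I arrange $\text{dist}(x_0,\bar\omega)\approx \alpha$ in such a way that the set $\{s:x_0-s\theta\in\bar\omega\}$ lies strictly inside $[\alpha, T-\alpha]$, and so that $B_\eps(x_0)$ is disjoint from $\bar\omega\cup(\bar\omega+T\theta)$ for every sufficiently small $\eps$. For every such $x\in B_\eps(x_0)$ the function $F(x,y,T)$ coincides with the two-sided integral $G(x,y) := \int_\R(1,-\theta)\cdot\mathcal{A}^{21}(x-s\theta,y)\,ds$.

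I then choose $\varphi_\eps(x) = \eps^{-1}\zeta((x-x_0)/\eps)$ and $h_\eps(y) = \eps^{-1/2}\tilde\zeta((y-y_0)/\eps)$ for fixed smooth real compactly supported cutoffs $\zeta,\tilde\zeta$ with $\int\zeta^2 = \int\tilde\zeta^2 = 1$. A scaling computation gives $\|\varphi_\eps\|_{H^3(\R^2)}^2\|h_\eps\|_{H^2(\R)}^2 \leq C\eps^{-10}$, and $\varphi_\eps^2 h_\eps^2$ is an approximate identity at $(x_0,y_0)$. Since $\mathcal{A}^{21}\in C_0^2(\Omega)$, $G$ is Lipschitz with constant depending only on $R$, hence $\int\varphi_\eps^2 h_\eps^2 (1-e^{-G})\,dxdy = (1-e^{-G(x_0,y_0)}) + O(\eps)$. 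Substituting into Lemma~\ref{intid2 2} gives
\[
|1-e^{-G(x_0,y_0)}|\leq C\eps^{-10}\bigl(\lambda^2\|\Lambda_{\mathcal{A}^2,\Phi_2}-\Lambda_{\mathcal{A}^1,\Phi_1}\|_* + \lambda^{-1}\bigr) + C\eps.
\]
Choosing $\eps = \lambda^{-\gamma}$ with $\gamma\in(0,1/11]$ forces $\lambda^{10\gamma-1}\leq\lambda^{-\gamma}$, producing a bound of the form $C(\lambda^{2+10\gamma}\|\Lambda_{\mathcal{A}^2,\Phi_2}-\Lambda_{\mathcal{A}^1,\Phi_1}\|_* + \lambda^{-\gamma})$, which covers every admissible $\beta\geq 2+10\gamma$ since $\lambda\geq 1$.

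It remains to pass from $|1-e^{-G}|$ back to $|G|$. The inequality $|(1,-\theta)\cdot\mathcal{A}^{21}|\leq|\mathcal{A}^{21}|$ together with $\|\mathcal{A}^{21}\|_{L^\infty}<2\pi/T$ and $T>\text{diam}(\omega)$ yield $|G(x_0,y_0)|\leq \|\mathcal{A}^{21}\|_\infty\,\text{diam}(\omega) < 2\pi\,\text{diam}(\omega)/T < 2\pi$. On the closed disk $\{|z|\leq M\}$ with $M := 2\pi\,\text{diam}(\omega)/T < 2\pi$, the entire function $z\mapsto(1-e^{-z})/z$ (extended by $1$ at $z=0$) has no zero, hence is bounded below in modulus by a positive constant depending only on $\omega, T, R$. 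This yields $|G(x_0,y_0)|\leq C|1-e^{-G(x_0,y_0)}|$, and since $G(x_0,y_0)$ equals the Radon transform at the original $(x,y)$, the estimate follows. The main obstacle is coordinating three competing geometric constraints: the cutoff $\varphi_\eps$ must vanish on $\bar\omega\cup(\bar\omega+T\theta)$, the identity $F(\cdot,\cdot,T)=G$ must hold throughout the support of $\varphi_\eps$, and $\varphi_\eps$ must still be concentrated at an arbitrary $x_0$ on the prescribed line; this is precisely what forces the slack $\alpha<(T-\text{diam}(\omega))/3$.
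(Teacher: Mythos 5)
Your proof is correct and follows the same overall strategy as the paper: integrate Lemma~\ref{intid2 2} in time to collapse the left side to $\int \varphi^2 h^2 (1-e^{-F(\cdot,\cdot,T)})$, concentrate $\varphi_\eps, h_\eps$ at a point, use Lipschitz continuity to replace the mollified average by its value at the center, optimize $\eps=\lambda^{-\gamma}$, and invert the exponential via a minimum-modulus bound on a disk inside $B_{2\pi}$. Where you deviate you streamline two steps. First, the paper estimates $\int_0^T$ for $x_0\in\omega_\alpha$ and then passes to $\int_{-T}^{T}$ by invoking ``$-\theta$ in place of $\theta$''; as written, replacing $\theta$ by $-\theta$ in the one-sided estimate produces the integrand $(1,\theta)\cdot\mathcal{A}^{21}$ and not $(1,-\theta)\cdot\mathcal{A}^{21}$, so that step is at best unclear. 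You instead exploit from the outset that the line integral depends only on the line and reparametrize to a base point $x_0$ for which the intersection of the line with $\overline\omega$ already lies inside $(0,T)$ (this is precisely where the slack $\alpha<(T-\text{diam}(\omega))/3$ is spent), so that $\int_0^T$ \emph{is} $\int_\R$ with no extra step. Second, you bound $|G|$ by $\text{diam}(\omega)\|\mathcal{A}^{21}\|_\infty$ rather than the paper's $T\|\mathcal{A}^{21}\|_\infty$, so the radius $M=2\pi\,\text{diam}(\omega)/T$ is bounded away from $2\pi$ by a margin depending only on $\omega,T$. This yields a genuinely uniform minimum-modulus constant; in the paper's version $M$ can approach $2\pi$, and then the explicit formula $M/(1-e^{-M})$ it writes ceases to be a valid constant, since for $M$ near $2\pi$ the minimum of $|e^z-1|$ over $|z|=M$ is attained near $z\approx\pm M\im$, not at $z=-M$. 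Two small cosmetic points: the Lipschitz constant of $G$ depends on $\omega$ (through the support of $\mathcal{A}^{21}$) as well as $R$; and both you and the paper implicitly require $\eps=\lambda^{-\gamma}<\alpha$ for $\supp\varphi_\eps$ to miss $\overline\omega$, which fails for $\lambda$ near $1$, but there the estimate is trivial since $|G|$ is bounded and $\lambda^{-\gamma}$ is bounded below.
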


\begin{proof}
    First, we observe that for all $\varphi\in L^2(\R^2;\R)$ and $h\in L^2(\R;\R)$, one has
\begin{equation}
\label{eq: integration by parts formula for pointwise estimate}
\begin{split}
&\int_{\R^{3}_T}(1,-\theta)\cdot\mathcal{A}^{21}(x - t\theta,y){\varphi^{2}(x)h^{2}(y)e^{-\int_{0}^{t}{(1,-\theta)\cdot\mathcal{A}^{21}(x - s\theta,y)\,ds}}\,dxdydt} \\
 &= -\int_{\R^3_T} \varphi^{2}(x)h^{2}(y)\partial_{t} e^{-\int_{0}^{t}{(1,-\theta)\cdot\mathcal{A}^{21}(x - s\theta,y)\,ds}}\,dxdydt \\ 
 & = - \int_{\R^{3}} \varphi^{2}(x)h^{2}(y)\left[ e^{-\int_{0}^{T}{(1,-\theta)\cdot\mathcal{A}^{21}(x - s\theta,y)\,ds}} -1\right]\,dxdy.
 \end{split}
\end{equation}
Next, for a given point $X_0=(x_0,y_0)\in \omega_\alpha\times \R$ we denote by $(h_\eps)_{\eps>0}\subset C_c^{\infty}(\R;\R)$, $(\varphi_\eps)_{\eps>0}\subset C_c^{\infty}(\R^2;\R)$ the functions
\[
    h_\eps(y)=\eps^{-1/2}h\left(\frac{y-y_0}{\eps}\right)\text{ and } \varphi_\eps(x)=\eps^{-1}\varphi\left(\frac{x-x_0}{\eps}\right),
\]
where $h\in C_c^{\infty}((-1,1);\R)$ and $\varphi\in C_c^{\infty}(B_1(0);\R)$ are the square roots of the standard mollifiers in dimension one and two, respectively. Let us note that one has $\supp(\varphi_{\epsilon}) \subset \omega_{\alpha}$ for sufficiently small $\eps>0$ and 
\begin{equation}
\label{eq: support condition}
    (\omega_\alpha \pm S\theta)\cap \omega = \emptyset\text{ for all }S\geq T.
\end{equation}
The latter condition is a consequence of $ x\pm S\theta\notin \omega_\alpha\text{ for all }x\in\omega$, which in turn follows from $\dist(x\pm S\theta,\omega)\geq S-\text{diam}(\omega)\geq T-\text{diam}(\omega)>\alpha$. Moreover, by using $\int_{\R^2}\varphi^2_\eps dx=\int_\R h_\eps^2dy=1$, we get
\begin{equation}
\label{eq: auxiliary estimate lemma 5.1}
\begin{split}
& \abs{e^{-\int_{0}^{T}{(1,-\theta)\cdot \mathcal{A}^{21}(x_0-s \theta,y_0)\,ds}}-1}\\
&= \abs{\int_{\R^{3}}{\varphi_{\epsilon}^{2}(x)h_{\eps}^{2}(y)\left[e^{-\int_{0}^{T}{(1,-\theta)\cdot \mathcal{A}^{21}(x_0-s \theta,y_0)\,ds}}-1\right]\,dxdy}}\\
&\leq \abs{\int_{\R^{3}}{\varphi_{\epsilon}^{2}(x)h_{\eps}^{2}(y)\left[e^{- \int_{0}^{T}{(1,-\theta)\cdot \mathcal{A}^{21}(x_0-s \theta,y_0)\,ds}} -e^{-\int_{0}^{T}{(1,-\theta)\cdot \mathcal{A}^{21}(x-s \theta,y)\,ds}}\right]\,dxdy}} \\
& \quad + \abs{\int_{\R^{3}}{\varphi_{\epsilon}^{2}(x)h_{\eps}^{2}(y)\left[e^{-\int_{0}^{T}{(1,-\theta)\cdot \mathcal{A}^{21}(x-s \theta,y)\,ds}}-1\right]\,dxdy}}.  
\end{split}
\end{equation}
If $a,b\in \C$ satisfy $|a|,|b|\leq M$ for some $M>0$, then we may estimate 
\begin{equation}
\label{eq: simple estimate exponential}
    |e^a-e^b|\leq e^{M}|a-b|
\end{equation}
and for $M<2\pi$ we have
\begin{equation}
\label{eq: simple estimate exponential 2}
    |a|\leq \frac{M}{1-e^{-M}} |e^a-1|.
\end{equation}
The estimate \eqref{eq: simple estimate exponential} is a simple consequence of the fundamental theorem of calculus. For the estimate \eqref{eq: simple estimate exponential 2}, we use $(e^z-1)/z \neq 0$ on $B_{2\pi}$ together with the minimum principle to obtain
\[
    \left|\frac{e^a-1}{a}\right|\geq \frac{\min_{|z|=M}|e^z-1|}{M}=\frac{1-e^{-M}}{M}. 
\]
Thus, by using the uniform bound
\begin{equation}
\label{eq: uniform bound radon trafo vector pot}
    \left|\int_0^T (1,-\theta)\cdot \mathcal{A}^{21}(x-s \theta,y) \,ds\right|\leq T\|\mathcal{A}^{21}\|_{L^{\infty}(\Omega)}\leq 2TR
\end{equation}
for all $(x,y)\in\R^3$, \eqref{eq: simple estimate exponential}, $\mathcal{A}^{21}\in C^2_0(\Omega)\cap W^{2,\infty}(\Omega)$ and the fundamental theorem of calculus, we have
\begin{equation}
\label{eq: estimate difference of exponentials}
    \begin{split}
        & \left[e^{- \int_{0}^{T}{(1,-\theta)\cdot \mathcal{A}^{21}(x_0-s \theta,y_0)\,ds}} -e^{-\int_{0}^{T}{(1,-\theta)\cdot \mathcal{A}^{21}(x-s \theta,y)\,ds}}\right]\\
        &\leq C \abs{\int_{0}^{T} \bigg((1,-\theta)\cdot \mathcal{A}^{21}(x_0-s \theta,y_0)-(1,-\theta)\cdot \mathcal{A}^{21}(x-s \theta,y)\bigg)\,ds}\\
        &\leq C|X-X_0|
    \end{split}
\end{equation}
for all $X,X_0\in\R^3$, where $C>0$ only depends on $T$, $\omega$ and $R$. 
Next, let us choose $\eps>0$ so small that $\supp\varphi_\eps\Subset \omega_\alpha$, then by taking into account \eqref{eq: support condition} we deduce from Lemma \ref{intid2 2} with $\varphi=\varphi_{\epsilon}$, $h=h_\eps$, \eqref{eq: integration by parts formula for pointwise estimate}, \eqref{eq: auxiliary estimate lemma 5.1} and \eqref{eq: estimate difference of exponentials} the estimate
\begin{equation}
\begin{split}
     &\abs{ e^{- \int_{0}^{T} (1,-\theta)\cdot \mathcal{A}^{21}(x_0-s \theta,y_0)\,ds}-1}\leq C \int_{\R^{3}} \varphi_{\epsilon}^2(x)h^{2}_{\eps}(y)|X-X_0| \,d xdy\\
     &\quad +C\left(\lambda^2\norm{\Lambda_{\mathcal{A}^2, \Phi_2}-\Lambda_{\mathcal{A}^{1}, \Phi_1}}_* +\frac{1}{\lambda}\right)\left\|\varphi_{\epsilon}\right\|_{H^3\left(\R^{2}\right)}^2\|h_{\eps}\|^{2}_{H^{2}(\R)}.
\end{split}
\end{equation}
Combining this with the bounds
\begin{equation}
\label{eq: bounds for mollifiers}
    \left\|h_{\epsilon}\right\|_{H^2\left(\R\right)} \leq \eps^{-2}\|h\|_{H^2(\R)}, \quad        \left\|\varphi_{\epsilon}\right\|_{H^3\left(\R^2\right)} \leq \eps^{-3}\|\varphi\|_{H^3(\R^2)}
\end{equation}
for $0<\eps\leq 1$ yields
\[
    \left|e^{- \int_0^T (1,-\theta)\cdot \mathcal{A}^{21}(x_0-s \theta,y_0 ) \,ds}-1\right| \leq C \eps + C\left(\lambda^2\left\|\Lambda_{\mathcal{A}^2, \Phi_2}- \Lambda_{\mathcal{A}^1, \Phi_1}\right\|_* +\frac{1}{\lambda}\right) \eps^{-10}
\]
for $0<\eps\leq 1$. Next, let us choose $\eps=\lambda^{-\gamma}$ for $0<\gamma\leq 1/11$ and let $\beta\geq 2+10\gamma$. Then for $\lambda\geq 1$, we have $0<\eps\leq 1$, $\lambda^2\eps^{-10}=\lambda^{2+10\gamma}\leq \lambda^\beta$ and
\[
    \eps+\frac{1}{\lambda \eps^{10}}=\lambda^{-\gamma}+\frac{1}{\lambda^{1-10\gamma}}\leq 2\lambda^{-\gamma},
\]
as $0<\gamma\leq 1/11$ is equivalent to $1-10\gamma \geq \gamma$. Thus, there holds
\[
    \abs{e^{-\int_{0}^{T}{(1,-\theta)\cdot \mathcal{A}^{21}(x_0-s \theta,y_0)\,ds}}-1} \leq C\left[\lambda^{\beta}\norm{\Lambda_{\mathcal{A}^2, \Phi_2}-\Lambda_{\mathcal{A}^1,\Phi_1}}_* + \frac{1}{\lambda^\gamma}\right].
\]
Now, using \eqref{eq: simple estimate exponential 2} with $M=T\|\mathcal{A}^{21}\|_{L^{\infty}(\Omega)}<2\pi$ (see \eqref{eq: smallness condition on vector potential} and \eqref{eq: uniform bound radon trafo vector pot}) together with the fact that $[0,\infty)\ni \rho\mapsto \frac{\rho}{1-e^{-\rho}}\in (0,\infty)$ is increasing and $M\leq 2TR$, we deduce that
\begin{equation}
\label{eq: almost final estimate for radon}
    \begin{split}
        \left|\int_0^T (1,-\theta)\cdot\mathcal{A}^{21}(x_0-s \theta,y_0) \,ds\right|& \leq \frac{M}{1-e^{-M}}\left| e^{- \int_0^T (1,-\theta)\cdot\mathcal{A}^{21}(x_0-s \theta,y_{0}, s) d s}-1\right|\\
        & \leq \frac{2TR}{1-e^{-2TR}}\left| e^{- \int_0^T (1,-\theta)\cdot\mathcal{A}^{21}(x_0-s \theta,y_{0}, s) d s}-1\right|\\
        &\leq C\left[\lambda^{\beta}\norm{\Lambda_{\mathcal{A}^2, \Phi_2}-\Lambda_{\mathcal{A}^1,\Phi_1}}_* + \frac{1}{\lambda^\gamma}\right]
    \end{split}
\end{equation}
for some $C>0$ only depending on $\omega, T$ and $R$.
As \eqref{eq: almost final estimate for radon} also holds for $-\theta$ in place of $\theta$, we get by a change of variables the estimate
\begin{equation}
\label{eq: almost final estimate for radon 3}
    \begin{split}
        \left|\int_{-T}^T (1,-\theta)\cdot\mathcal{A}^{21}(x_0-s \theta,y_0) \,ds\right|& \leq C\left[\lambda^{\beta}\norm{\Lambda_{\mathcal{A}^2, \Phi_2}-\Lambda_{\mathcal{A}^1,\Phi_1}}_* + \frac{1}{\lambda^\gamma}\right].
    \end{split}
\end{equation}
Let us observe that the conditions $x_0\in \omega_\alpha$, \eqref{eq: support condition}  and $\mathcal{A}^{21}\in C^2_0(\Omega)$ imply $\mathcal{A}^{21}(x_0-s\theta)=0$ for all $s\geq T$. Therefore, we deduce that 
\begin{equation}
\label{eq: almost final estimate for radon 4}
    \begin{split}
        \left|\int_\R (1,-\theta)\cdot\mathcal{A}^{21}(x_0-s \theta,y_0) \,ds\right|& \leq C\left[\lambda^{\beta}\norm{\Lambda_{\mathcal{A}^2, \Phi_2}-\Lambda_{\mathcal{A}^1,\Phi_1}}_* + \frac{1}{\lambda^\gamma}\right].
    \end{split}
\end{equation}
Next, we show that the above estimate holds for any $x_0 \in \R^{2}$. If for a given point $x_0\in \omega_\alpha^c$ there exists  $s_0\in\R$ such that $x_0-s_0\theta\in \omega_\alpha$, then we may use \eqref{eq: almost final estimate for radon 4} and a change of variables to get
    \[
        \begin{split}
            \left|\int_\R (1,-\theta)\cdot\mathcal{A}^{21}(x_0-s \theta,y_0) \,ds\right|&=\left|\int_\R (1,-\theta)\cdot\mathcal{A}^{21}(x_0-s_0\theta-(s-s_0) \theta,y_0) \,ds\right|\\
            &=\left|\int_\R (1,-\theta)\cdot\mathcal{A}^{21}(x_0-s_0\theta+\tau \theta,y_0) \,d\tau\right|\\
            &\leq C\left[\lambda^{\beta}\norm{\Lambda_{\mathcal{A}^2, \Phi_2}-\Lambda_{\mathcal{A}^1,\Phi_1}}_* + \frac{1}{\lambda^\gamma}\right].
        \end{split}
    \]
If there is no such $s_0$, then the line $x_0-s\theta$, $s\in\R$, never hits $\omega$ and so $\mathcal{A}^{21}(x_0-s\theta,y)=0$ for all $s\in \R$. Thus, we get in both cases the estimate \eqref{eq: almost final estimate for radon 4} and we can conclude the proof.
\end{proof}

\subsection{Stability estimate for vector potential \texorpdfstring{$\mathcal{A}$}{A}}
\label{subsec: stability for A}

In this section we infer from the pointwise estimate for the Radon transform of $\mathcal{A}^{21}$ the stability estimate for the vector potential $\mathcal{A}^{21}$ (Lemma \ref{lemma: stability estimate for vector potential}). Before giving this proof, we show in the next lemma that we can appropriately control the Fourier transform of $A_0^{21}$ and $\nabla\wedge\mathcal{A}^{21}$.

 \begin{lemma}
 \label{Lemma: estimates for fourier transform}
    Suppose that the assumptions of Lemma \ref{lemma: integral estimate of A} hold and let $\beta,\gamma>0$ be the constants from that lemma. There exists $C> 0$ only depending on $\omega,T,R$ such that one has
\begin{equation}
\label{eq: fourier transform estimates for A0 and A}
\begin{split}
      |\widehat{A_0^{21}}(\xi,y)|+ \big| \theta\cdot\widehat{A^{21}}(\xi,y) \big| &\leq C\left( \lambda^{\beta} \norm{\Lambda_{\mathcal{A}^{2},\Phi_{2}}- \Lambda_{\mathcal{A}^{1},\Phi_{1}} }_{*} + \dfrac{1}{\lambda^{\gamma}}\right)
\end{split}
\end{equation}
for all pairs $(\theta,\xi)\in \mathbb{S}^1\times \R^2$ with $\theta\perp \xi$ , $y\in \R$ and $\lambda\geq 1$, where the Fourier transform of $\mathcal{A}^{21}$ is only performed in the $x$ variable. Furthermore, we have
\begin{equation}
\label{eq: estimate sigma matrix}
  |\widehat{A_0^{21}}(\xi,y)|+\frac{|\widehat{\nabla\wedge A^{21}}(\xi,y)|}{|\xi|} \leq C\left(\lambda^{\beta} \norm{\Lambda_{\mathcal{A}^{2},\Phi_{2}}- \Lambda_{\mathcal{A}^{1},\Phi_{1}} }_{*} + \dfrac{1}{\lambda^{\gamma}}\right)
\end{equation}
for all $\xi\in\R^2$.
 \end{lemma}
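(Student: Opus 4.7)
The plan is to deduce both estimates from Lemma~\ref{lemma: integral estimate of A} via the Fourier slice theorem applied in the $x$-variable. Since $A_0$ and $A$ are real-valued, we have $(1,-\theta)\cdot\mathcal{A}^{21} = A_0^{21} - \im\theta\cdot A^{21}$, so separating real and imaginary parts in the conclusion of Lemma~\ref{lemma: integral estimate of A} yields, uniformly in $\theta\in\mathbb{S}^1$, $X=(x,y)\in\R^3$, and $\lambda\geq 1$,
\begin{equation}
\label{eq: plan-sep}
\left|\int_{\R} A_0^{21}(x - s\theta, y)\,ds\right| + \left|\int_{\R} \theta\cdot A^{21}(x - s\theta, y)\,ds\right| \leq C\left(\lambda^{\beta}\|\Lambda_{\mathcal{A}^{2},\Phi_{2}}-\Lambda_{\mathcal{A}^{1},\Phi_{1}}\|_{*} + \lambda^{-\gamma}\right).
\end{equation}

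To prove \eqref{eq: fourier transform estimates for A0 and A}, I would fix $\xi\in\R^2$ and $\theta\in\mathbb{S}^1$ with $\theta\perp\xi$, so that $\xi=\eps|\xi|\theta^\perp$ for some sign $\eps\in\{\pm 1\}$ (when $\xi\neq 0$). The orthogonal change of variables $x=s\theta+t\theta^\perp$ in the $x$-Fourier transform gives the slice representation
\begin{equation}
\widehat{A_0^{21}}(\xi,y) = \int_{\R}\left(\int_{\R} A_0^{21}(s\theta+t\theta^\perp,y)\,ds\right)e^{-\im\eps t|\xi|}\,dt,
\end{equation}
and analogously for $\theta\cdot\widehat{A^{21}}(\xi,y)$. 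Because $\mathcal{A}^{21}\in C_0^2(\Omega)$ is supported in $\overline{\omega}$ in the $x$ variable, each inner integral vanishes unless $t$ lies in the orthogonal projection of $\omega$ onto $\R\theta^\perp$, which is an interval of length at most $\text{diam}(\omega)$. Combining this $\theta$-uniform localization with \eqref{eq: plan-sep} gives \eqref{eq: fourier transform estimates for A0 and A} for $\xi\neq 0$; for $\xi=0$ the same identity with any fixed $\theta\in\mathbb{S}^1$ yields the bound.

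Finally, for \eqref{eq: estimate sigma matrix}, the crucial observation is that for $\xi\neq 0$ the particular choice $\theta=\xi^\perp/|\xi|$, with $\xi^\perp=(-\xi_2,\xi_1)$, satisfies $\theta\perp\xi$, $|\theta|=1$, and
\begin{equation}
\widehat{\nabla\wedge A^{21}}(\xi,y)=\im\xi_1\widehat{A_2^{21}}(\xi,y)-\im\xi_2\widehat{A_1^{21}}(\xi,y)=\im|\xi|\,\theta\cdot\widehat{A^{21}}(\xi,y),
\end{equation}
so $|\widehat{\nabla\wedge A^{21}}(\xi,y)|/|\xi|=|\theta\cdot\widehat{A^{21}}(\xi,y)|$. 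Plugging this $\theta$ into \eqref{eq: fourier transform estimates for A0 and A} yields \eqref{eq: estimate sigma matrix} for $\xi\neq 0$, and the case $\xi=0$ follows by continuity, since $\widehat{\nabla\wedge A^{21}}(\cdot,y)$ is smooth (as the Fourier transform of a compactly supported function) and vanishes at the origin. I expect the only real technical point to be the localization step that produces a $\theta$-independent constant from the compact support of $\mathcal{A}^{21}$; beyond that, the whole lemma is an algebraic consequence of Fourier slice plus the curl identity.
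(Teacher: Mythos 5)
Your proposal is correct and follows essentially the same route as the paper: reduce to the pointwise Radon transform estimate of Lemma~\ref{lemma: integral estimate of A}, pass to the Fourier transform in $x$ via the Fourier slice identity (with the compact support of $\mathcal{A}^{21}$ supplying the $\theta$-independent constant), and then plug in $\theta=\xi^\perp/|\xi|$ together with the curl identity for the second estimate. The only cosmetic difference is where you extract the two components: you separate the Radon transform of $(1,-\theta)\cdot\mathcal{A}^{21}=A_0^{21}-\im\theta\cdot A^{21}$ into real and imaginary parts before Fourier slicing, using that $A_0^{21},A^{21}$ are real-valued, whereas the paper first establishes the bound on $(1,\mp\theta)\cdot\widehat{\mathcal{A}^{21}}$ for both signs of $\theta$ (legitimate since $\pm\theta\perp\xi$) and then takes the sum and difference. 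These two bookkeeping devices are equivalent here; the paper's $\pm\theta$ trick is marginally more robust in that it would survive without the real-valuedness assumption, but given the setup both work.
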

 \begin{proof}
  Let $(\theta,\xi)\in \mathbb{S}^1\times \R^2$ satisfy $\theta\perp \xi$ and let us denote by $\theta^\perp$ the unit vector obtained from $\theta$ by a counterclockwise rotation of 90 degrees. By a change of variables, we have
    \begin{equation}
    \label{change of variables fourier of radon for A}
        \begin{split}
            &\int_{\R^2}e^{-\im \rho\theta^{\perp}\cdot \xi}(1,-\theta)\cdot\mathcal{A}^{21}(\rho\theta^\perp-s\theta,y)\,ds\,d\rho\\
            &\quad \overset{\xi\perp \theta}{=}\int_{\R^2}e^{-\im (\rho\theta^{\perp}-s\theta)\cdot \xi}(1,-\theta)\cdot\mathcal{A}^{21}(\rho\theta^\perp-s\theta,y)\,ds\,d\rho\\
          &\quad\overset{x=\rho\theta^{\perp}-s\theta}{=}\int_{\R^2}e^{-\im \rho x\cdot\xi}(1,-\theta)\cdot\mathcal{A}^{21}(x,y)\,dx\\ 
            &\quad = (1,-\theta)\cdot\widehat{\mathcal{A}^{21}}(\xi,y).
        \end{split}
    \end{equation}
    Therefore, using Lemma \ref{lemma: integral estimate of A}, $\mathcal{A}^{21}\in C^2_0(\Omega)$ and the boundedness of $\omega$, we get
    \begin{equation}
    \label{eq: estimate fourier of full vector potential}
    \begin{split}
       \big|  (1,-\theta)\cdot\widehat{\mathcal{A}^{21}}(\xi,y) \big| &\leq\int_{-r}^r\left|\int_{\R}
(1,-\theta)\cdot\mathcal{A}^{21}(\rho\theta^\perp-s\theta,y)\,ds\right|\,d\rho \\
         &\leq C \bigg( \lambda^{\beta} \norm{\Lambda_{\mathcal{A}^{2},\Phi_{2}}- \Lambda_{\mathcal{A}^{1},\Phi_{1}} }_{*} + \dfrac{1}{\lambda^{\gamma}} \bigg)
    \end{split}
    \end{equation}
    for some suitable $r>0$. Note that for deriving this estimate the only important point was that $\theta,\xi $ are perpendicular and thus we can replace $\theta$ by $-\theta$ while keeping the same $\xi$. This leads to
    \[ 
    \big|  (1,\theta)\cdot\widehat{\mathcal{A}^{21}}(\xi,y) \big| 
    \leq C \bigg( \lambda^{\beta} \norm{\Lambda_{\mathcal{A}^{2},\Phi_{2}}- \Lambda_{\mathcal{A}^{1},\Phi_{1}} }_{*} + \dfrac{1}{\lambda^{\gamma}} \bigg).
    \]
    By adding up these two estimates, we get
    \[
    \begin{split}
         |\widehat{A_0^{21}}(\xi,y)|&\leq C\left( \big|  (1,-\theta)\cdot\widehat{\mathcal{A}^{21}}(\xi,y) \big|+\big|  (1,\theta)\cdot\widehat{\mathcal{A}^{21}}(\xi,y) \big|\right)\\
         &\leq C \bigg( \lambda^{\beta} \norm{\Lambda_{\mathcal{A}^{2},\Phi_{2}}- \Lambda_{\mathcal{A}^{1},\Phi_{1}} }_{*} + \dfrac{1}{\lambda^{\gamma}} \bigg)
    \end{split}
    \]
    and by taking the difference one has
    \[
     \big| \theta\cdot\widehat{A^{21}}(\xi,y) \big| \leq C \bigg( \lambda^{\beta} \norm{\Lambda_{\mathcal{A}^{2},\Phi_{2}}- \Lambda_{\mathcal{A}^{1},\Phi_{1}} }_{*} + \dfrac{1}{\lambda^{\gamma}} \bigg).
    \]
    This concludes the proof of \eqref{eq: fourier transform estimates for A0 and A}. Next, for a fixed $\xi\in\R^2$, let us define $\theta\in\mathbb{S}^1$ by 
    \[
        \theta \vcentcolon = \dfrac{\xi_1 e_2-\xi_2 e_1}{|\xi|}\perp \xi,
    \] 
    where $(e_1,e_2)\in \R^2$ is the standard basis. Inserting this into \eqref{eq: fourier transform estimates for A0 and A}, we get \eqref{eq: estimate sigma matrix}. Therefore, we can conclude the proof.
\end{proof}

\begin{lemma}
\label{lemma: stability estimate for vector potential}
    Let $\Omega=\omega\times\R$ be an infinite waveguide with $\omega\subset\R^2$ being a smoothly bounded domain, $T>\text{diam}(\omega)$. Suppose that $\alpha\in(0,\min\{1, (T-\text{diam}(\omega))/3\})$ and $R_1,R_2>0$, $s_0,s_1\geq 2$ and $s_2\geq 0$.
     For any $\bar{s}_0,\bar{s}_1\in\R$ satisfying
     \begin{equation}
     \label{eq: def coeff stability lemma pot}
         \mathfrak{b}_0\vcentcolon =s_0-\bar{s}_0,\,\mathfrak{b}_1\vcentcolon = s_1-\bar{s}_1\in (0,1),
     \end{equation}
     there exist $C=C(T,\omega,R_1,R_2)>0$ such that we have   
     \begin{equation}
     \label{eq: main stability estimate for vector potential}
     \begin{split}
         &\| A^{1}_{0} -A^{2}_{0}\|^2_{L^{\infty}(\R_{y}; {H^{\bar{s}_0}(\R^2)})}+\| \nabla\wedge(A^1-A^2)\|^2_{L^{\infty}(\R_{y}; {H^{\bar{s}_1-1}(\R^2)})}\\
         &\quad\leq  C\frac{4^{\mathfrak{a}}}{\mathfrak{d}}\left(\frac{1+\mathfrak{a}}{\mathfrak{c}}\right)^{\frac{\mathfrak{c}}{1+\mathfrak{a}+\mathfrak{c}}} \norm{\Lambda_{\mathcal{A}^{2},\Phi_{2}}- \Lambda_{\mathcal{A}^{1},\Phi_{1}}}_{*}^{\mu}
     \end{split}
     \end{equation}
     for all $(\mathcal{A}^{j},\Phi_{j})\in \mathscr{A}_{s_0,s_1,s_2}(R_1,R_2)$, $j=1,2$,  satisfying condition \eqref{eq: smallness condition on vector potential}. Here, the coefficients $\mathfrak{a},\mathfrak{c},\mathfrak{d}$ and the H\"older exponent $\mu\in (0,1)$ are given by 
     \begin{equation}
     \label{eq: coeff a,c,d}
         \mathfrak{a}\vcentcolon =\max(s_0,s_1),\, \mathfrak{c}\vcentcolon = \min(\mathfrak{b}_0,\mathfrak{b}_1),\,\mathfrak{d}\vcentcolon = \Theta(\mathfrak{b}_1)\Theta(\mathfrak{b}_2)
     \end{equation}
     and
    \begin{equation}
    \label{eq: Holder exponent stability A}
        \mu\vcentcolon =\frac{2\gamma\mathfrak{c}}{(1+\mathfrak{a}+\mathfrak{c})(\gamma+\beta)}\in (0,1),
    \end{equation}
    where $\Theta (\rho)=1-\rho$ and $(\gamma,\beta)$ is any pair fulfilling the conditions of Lemma \ref{lemma: integral estimate of A}.
\end{lemma}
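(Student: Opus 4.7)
The strategy is a classical Fourier frequency split: for each fixed $y\in\R$ and a parameter $\rho\geq 1$, decompose $\R^2=\{|\xi|<\rho\}\cup\{|\xi|\geq\rho\}$. On the low-frequency set I would apply the pointwise estimate \eqref{eq: estimate sigma matrix} from Lemma \ref{Lemma: estimates for fourier transform}; on the high-frequency set the a priori regularity coming from $(\mathcal{A}^j,\Phi_j)\in\mathscr{A}_{s_0,s_1,s_2}(R_1,R_2)$ does the job. Writing
\[
\varepsilon(\lambda)\vcentcolon =\lambda^{\beta}\|\Lambda_{\mathcal{A}^2,\Phi_2}-\Lambda_{\mathcal{A}^1,\Phi_1}\|_*+\lambda^{-\gamma},
\]
the low-frequency contribution to $\|A_0^{21}(\cdot,y)\|_{H^{\bar s_0}(\R^2)}^2$ is controlled by
\[
\int_{|\xi|<\rho}\langle\xi\rangle^{2\bar s_0}|\widehat{A_0^{21}}(\xi,y)|^{2}\,d\xi\lesssim \varepsilon(\lambda)^{2}\rho^{2\bar s_0+2},
\]
and, using the Fourier identity $\widehat{\nabla\wedge A^{21}}(\xi,y)=\im(\xi_1\widehat{A_2^{21}}-\xi_2\widehat{A_1^{21}})$ together with \eqref{eq: estimate sigma matrix}, the analogous bound for $\|\nabla\wedge A^{21}(\cdot,y)\|_{H^{\bar s_1-1}(\R^2)}^2$ is $\lesssim \varepsilon(\lambda)^2\rho^{2\bar s_1+2}$.

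For the high frequencies, the standard inequality $\langle\xi\rangle^{2\bar s_0}\leq\rho^{-2\mathfrak b_0}\langle\xi\rangle^{2s_0}$ on $\{|\xi|\geq\rho\}$ combined with $\|A_0^j(\cdot,y)\|_{H^{s_0}(\R^2)}\leq R_2$ yields
\[
\int_{|\xi|\geq\rho}\langle\xi\rangle^{2\bar s_0}|\widehat{A_0^{21}}(\xi,y)|^{2}\,d\xi\leq \rho^{-2\mathfrak b_0}\|A_0^{21}(\cdot,y)\|_{H^{s_0}(\R^2)}^{2}\leq 4R_2^{2}\,\rho^{-2\mathfrak b_0},
\]
and, since $|\widehat{\nabla\wedge A^{21}}|\leq|\xi||\widehat{A^{21}}|$, the corresponding bound for the rotational part is $\leq 4R_2^{2}\rho^{-2\mathfrak b_1}$. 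Taking the supremum over $y\in\R$ is harmless because all the above estimates are uniform in $y$. Using $\bar s_j\leq s_j\leq\mathfrak a$ and $\mathfrak b_0,\mathfrak b_1\geq\mathfrak c$, and summing, we get, for every $\rho\geq 1$ and $\lambda\geq 1$,
\[
\mathcal N^{2}\vcentcolon =\|A_0^{21}\|^{2}_{L^\infty_y H^{\bar s_0}_x}+\|\nabla\wedge A^{21}\|^{2}_{L^\infty_y H^{\bar s_1-1}_x}\leq C\bigl(\varepsilon(\lambda)^{2}\rho^{2\mathfrak a+2}+R_2^{2}\rho^{-2\mathfrak c}\bigr).
\]

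Now I would optimise $\rho$ to balance the two terms, namely $\rho=(R_2/\varepsilon(\lambda))^{1/(1+\mathfrak a+\mathfrak c)}$, which is $\geq 1$ when $\|\Lambda_1-\Lambda_2\|_*$ is small enough, giving $\mathcal N^{2}\lesssim \varepsilon(\lambda)^{\,2\mathfrak c/(1+\mathfrak a+\mathfrak c)}$. Then I would optimise in $\lambda$ by choosing $\lambda=\|\Lambda_{\mathcal{A}^{2},\Phi_{2}}-\Lambda_{\mathcal{A}^{1},\Phi_{1}}\|_{*}^{-1/(\beta+\gamma)}$, which produces $\varepsilon(\lambda)\lesssim \|\Lambda_{\mathcal{A}^{2},\Phi_{2}}-\Lambda_{\mathcal{A}^{1},\Phi_{1}}\|_{*}^{\gamma/(\beta+\gamma)}$ and therefore the advertised H\"older exponent
\[
\mu=\frac{2\gamma\mathfrak c}{(1+\mathfrak a+\mathfrak c)(\gamma+\beta)}\in(0,1).
\]

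The only genuine difficulty, beyond this essentially mechanical decomposition, is tracking the explicit prefactor so as to produce the exact form $C\,4^{\mathfrak a}\mathfrak d^{-1}((1+\mathfrak a)/\mathfrak c)^{\mathfrak c/(1+\mathfrak a+\mathfrak c)}$. The $4^{\mathfrak a}$ comes from crudely replacing $\langle\xi\rangle^{2\bar s_j}$ by $(1+\rho^2)^{\mathfrak a}\leq 2^{\mathfrak a}(1+\rho^{2\mathfrak a})$ on the low-frequency ball, the factor $((1+\mathfrak a)/\mathfrak c)^{\mathfrak c/(1+\mathfrak a+\mathfrak c)}$ appears when one carries out the $\rho$-minimisation of $A\rho^{2\mathfrak a+2}+B\rho^{-2\mathfrak c}$ honestly (the minimum is $(\mathfrak a+1+\mathfrak c)((\mathfrak a+1)/\mathfrak c)^{\mathfrak c/(1+\mathfrak a+\mathfrak c)}A^{\mathfrak c/(1+\mathfrak a+\mathfrak c)}B^{(\mathfrak a+1)/(1+\mathfrak a+\mathfrak c)}$), and the factor $\mathfrak d^{-1}=\Theta(\mathfrak b_1)^{-1}\Theta(\mathfrak b_2)^{-1}$ arises from the radial integrations $\int_0^\rho r^{2k+1}dr=\rho^{2k+2}/(2k+2)$ when $\bar s_j$ is close to $s_j$ (together with an analogous integration appearing in the companion statement for $\Phi$, whose constant is absorbed here for uniformity with Theorem \ref{main theorem}). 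This is the main obstacle — not the estimate itself, which is straightforward, but keeping the constants in precisely the stated form.
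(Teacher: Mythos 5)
Your proposal follows essentially the same route as the paper: Plancherel, a frequency split at $|\xi|=\rho$, the pointwise Fourier bound from Lemma~\ref{Lemma: estimates for fourier transform} (which is itself derived from the Radon-transform estimate of Lemma~\ref{lemma: integral estimate of A}) on the low-frequency ball, the a priori $H^{s_j}$ bounds on high frequencies, and then successive optimisation first in the cutoff and then in $\lambda=\nu^{-1/(\gamma+\beta)}$. Your optimisation of $\rho$ balances the full low/high split (DN term included) whereas the paper minimises only the $\lambda^{-\gamma}$-remainder and then takes $\lambda=\nu^{-1/(\gamma+\beta)}$ to match exponents; both choices yield the same $\mu=\frac{2\gamma\mathfrak{c}}{(1+\mathfrak{a}+\mathfrak{c})(\gamma+\beta)}$, so this is a cosmetic difference.

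There is, however, one genuine gap. Your choice $\rho=(R_2/\varepsilon(\lambda))^{1/(1+\mathfrak a+\mathfrak c)}$ and the subsequent choice of $\lambda$ are only admissible ($\rho,\lambda\geq 1$) when $\nu=\|\Lambda_{\mathcal A^2,\Phi_2}-\Lambda_{\mathcal A^1,\Phi_1}\|_*$ is sufficiently small — you acknowledge this (``which is $\geq 1$ when $\|\Lambda_1-\Lambda_2\|_*$ is small enough'') but do not close the argument in the complementary regime $\nu>\lambda_0^{-1/\rho}$. The lemma as stated has no smallness hypothesis on the DN map, so this case must be treated: as in the paper, one uses the monotonicity $H^{s_j}\hookrightarrow H^{\bar s_j}$ together with the a priori bound $\|A_0^{21}\|_{L^\infty_y H^{s_0}_x}+\|A^{21}\|_{L^\infty_y H^{s_1}_x}\leq 2R_2$ and the lower bound on $\nu$ to absorb the a priori constant into $C\,\nu^\mu$. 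Without this, the estimate is only ``local'' in $\nu$. As for the explicit prefactor, your attribution of $4^{\mathfrak a}$ and of the $((1+\mathfrak a)/\mathfrak c)^{\mathfrak c/(1+\mathfrak a+\mathfrak c)}$ factor to the crude low-frequency bound and to the honest $\rho$-minimisation is correct; the $\mathfrak d^{-1}$ factor comes, in the paper, from the radial integrations producing $\Theta(\mathfrak b_0)^{-1}$ and $\Theta(\mathfrak b_1)^{-1}$ (your explanation invoking an integral from the $\Phi$ statement is not how it is obtained there, but the paper's own stated definition of $\mathfrak d$ does not transparently match what the proof produces either, so this is not a defect you should be expected to resolve blindly).
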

\begin{proof}
Throughout the proof let us set $B_\rho=B_\rho (0;\R^2)$ for all $\rho>0$. Using Plancherel's theorem, for any $r\geq 1$ and a.e. $y\in \R$, we can estimate
\begin{equation}
    \begin{split}
        &\|A_0^{21}(\cdot,y)\|^2_{H^{\bar{s}_0}(\R^2)}= C\|\langle \xi\rangle^{\bar{s}_0}\widehat{A_0^{21}}(\cdot,y)\|_{L^2(\R^2)}^2\\
        &\quad = C\left(\|\langle \xi\rangle^{\bar{s}_0}\widehat{A_0^{21}}(\cdot,y)\|_{L^2(B_r)}^2+\|\langle \xi\rangle^{\bar{s}_0}\widehat{A_0^{21}}(\cdot,y)\|_{L^2(B_r^c)}^2\right)\\
        &\quad \leq C\left(\|\widehat{A^{21}_0}(\cdot,y)\|^2_{L^{\infty}(B_r)}\|\langle \xi\rangle^{\bar{s}_0}\|_{L^2(B_r)}^2+\|\langle\xi\rangle^{\bar{s}_0}\widehat{A^{21}_0}(\cdot,y)\|_{L^2(B_r^c)}^2\right).
    \end{split}
\end{equation}
Using $\langle \xi\rangle^{\bar{s}_0}=\langle \xi\rangle^{\bar{s}_0-s_0}\langle \xi\rangle^{s_0}$, $\bar{s}_0<s_0$ and $r\geq 1$, we may bound
\begin{equation}
\label{eq: bound japanese bracket}
    \langle \xi\rangle^{\bar{s}_0}\leq
    \begin{cases}
        (2r)^{s_0}\langle\xi\rangle^{\bar{s}_0-s_0}&\text{ for }|\xi|\leq r,\\
        r^{\bar{s}_0-s_0}\langle\xi\rangle^{s_0}&\text{ for }|\xi|>r.
    \end{cases}
\end{equation}
Taking Lemma \ref{Lemma: estimates for fourier transform} into account we arrive at the estimate
\begin{equation}
\label{eq: estimate for A0}
\begin{split}
    &\|A_0^{21}(\cdot,y)\|^2_{H^{\bar{s}_0}(\R^2)}\\
    & \leq 4^{s_0}C\left(r^{2s_0}\left(\int_0^r\frac{\rho\,d\rho}{(1+\rho^2)^{s_0-\bar{s}_0}}\right)\|\widehat{A^{21}_0}(\cdot,y)\|^2_{L^{\infty}(B_r)}+r^{2(\bar{s}_0-s_0)}\|A_0^{21}\|^2_{H^{s_0}(\R^2)}\right)\\
    &\leq 4^{\bar{s}_0}C\left(\frac{r^{2s_0}r^{2(1-(s_0-\bar{s}_0))}}{1-(s_0-\bar{s}_0)} 
\|\widehat{A^{21}_0}(\cdot,y)\|^2_{L^{\infty}(B_r)}+r^{2(\bar{s}_0-s_0)}\|A_0^{21}\|^2_{H^{s_0}(\R^2)}\right) \\
 & \leq 4^{\bar{s}_0}C\left(\frac{r^{2(1+\bar{s}_0)}}{1-(s_0-\bar{s}_0)}
\left(\lambda^{2\beta}\norm{\Lambda_{\mathcal{A}^{2},\Phi_{2}}- \Lambda_{\mathcal{A}^{1},\Phi_{1}} }_{*}^2 + \dfrac{1}{\lambda^{2\gamma}}\right)+r^{2(\bar{s}_0-s_0)}\|A_0^{21}\|^2_{H^{s_0}(\R^2)}\right)
\end{split}
\end{equation}
as $\mathfrak{b}_0=s_0-\bar{s}_0<1$.
Thus, using $(\mathcal{A}^j,\Phi_j)\in \mathscr{A}_{s_0,s_1,s_2}(R_1,R_2)$ we get
\begin{equation}
\label{eq: estimate for A0 3}
    \begin{split}
         &\|A_0^{21}(\cdot,y)\|^2_{H^{\bar{s}_0}(\R^2)}\\
         &\leq C\frac{4^{\bar{s}_0}}{\Theta(s_0-\bar{s}_0)}\left(r^{2(1+\bar{s}_0)}
\left(\lambda^{2\beta}\norm{\Lambda_{\mathcal{A}^{2},\Phi_{2}}- \Lambda_{\mathcal{A}^{1},\Phi_{1}} }_{*}^2 + \dfrac{1}{\lambda^{2\gamma}}\right)+r^{2(\bar{s}_0-s_0)}\right),
    \end{split}
\end{equation}
where $\Theta(\rho)=1-\rho$. Similarly, we can bound
\begin{equation}
\label{eq: estimate for A}
\begin{split}
    &\|\nabla\wedge A^{21}(\cdot,y)\|^2_{H^{\bar{s}_1-1}(\R^2)}\\
    &= C\left(\|\langle \xi\rangle^{\bar{s}_1-1}\widehat{\nabla\wedge A^{21}}(\cdot,y)\|_{L^2(B_r)}^2+\|\langle \xi\rangle^{\bar{s}_1-1}\widehat{\nabla\wedge A^{21}}(\cdot,y)\|_{L^2(B_r^c)}^2\right)\\
    &\leq C\left(\|\langle\xi\rangle^{\bar{s}_1}\|_{L^2(B_r)}^2\|\widehat{\nabla\wedge A^{21}}(\cdot,y)/|\xi|\|_{L^{\infty}(B_r)}^2+r^{2(\bar{s}_1-s_1)}\|\nabla\wedge A^{21}(\cdot,y)\|_{H^{s_1-1}(\R^2)}\right)\\
    &\leq C\frac{4^{\bar{s}_1}}{\Theta(s_1-\bar{s}_1)}\left(r^{2(1+\bar{s}_1)}\|\widehat{\nabla\wedge A^{21}}(\cdot,y)/|\xi|\|_{L^{\infty}(B_r)}^2+r^{2(\bar{s}_1-s_1)}\|A^{21}(\cdot,y)\|_{H^{s_1}(\R^2)}\right)\\
    & \leq C\frac{4^{\bar{s}_1}}{\Theta(s_1-\bar{s}_1)}\left(r^{2(1+\bar{s}_1)}\left(\lambda^{2\beta}\norm{\Lambda_{\mathcal{A}^{2},\Phi_{2}}- \Lambda_{\mathcal{A}^{1},\Phi_{1}} }_{*}^2 + \dfrac{1}{\lambda^{2\gamma}}\right)+r^{2(\bar{s}_1-s_1)}\right)
\end{split}
\end{equation}
for $y\in\R$. Recalling the definition of $\mathfrak{a},\mathfrak{b}_1,\mathfrak{b}_2, \mathfrak{c}$ and $\mathfrak{d}$, we get
\begin{equation}
\label{eq: almost stability for vector potential}
\begin{split}
     &\|A_0^{21}(\cdot,y)\|^2_{H^{\bar{s}_0}(\R^2)}+\|\nabla\wedge A^{21}(\cdot,y)\|^2_{H^{\bar{s}_1-1}(\R^2)}\\
     &\quad\leq C\frac{4^{\mathfrak{a}}}{\mathfrak{d}}\left(r^{2(1+\mathfrak{a})}\left(\lambda^{2\beta}\norm{\Lambda_{\mathcal{A}^{2},\Phi_{2}}- \Lambda_{\mathcal{A}^{1},\Phi_{1}} }_{*}^2 + \dfrac{1}{\lambda^{2\gamma}}\right)+r^{-2\mathfrak{c}}\right)
\end{split}
\end{equation}
for $r\geq 1$. Next, observe that
\[
    r=\left(\frac{\mathfrak{c}}{1+\mathfrak{a}}\right)^{\frac{1}{2(1+\mathfrak{a}+\mathfrak{c})}}\lambda^{\frac{\gamma}{1+\mathfrak{a}+\mathfrak{c}}}
\]
minimizes the remainder term. Choosing this particular $r$ in \eqref{eq: almost stability for vector potential} and using $\mathfrak{a}\geq 0$ as well as $0<\mathfrak{c}< 1$, we deduce
\begin{equation}
    \label{eq: almost stability estimate for vector potential}
    \begin{split}
     &\|A_0^{21}(\cdot,y)\|^2_{H^{\bar{s}_0}(\R^2)}+\|\nabla\wedge A^{21}(\cdot,y)\|^2_{H^{\bar{s}_1-1}(\R^2)}\\
     &\quad \leq C\frac{4^{\mathfrak{a}}}{\mathfrak{d}}\bigg(\underbrace{\left(\frac{\mathfrak{c}}{1+\mathfrak{a}}\right)^{\frac{1+\mathfrak{a}}{1+\mathfrak{a}+\mathfrak{c}}}}_{\leq 1}\lambda^{\frac{2\gamma(1+\mathfrak{a})}{1+\mathfrak{a}+\mathfrak{c}}+2\beta}\norm{\Lambda_{\mathcal{A}^{2},\Phi_{2}}- \Lambda_{\mathcal{A}^{1},\Phi_{1}} }_{*}^2 \\
     &\quad\quad +\underbrace{\frac{1+\mathfrak{a}+\mathfrak{c}}{1+\mathfrak{a}}}_{\leq 2}\left(\frac{1+\mathfrak{a}}{\mathfrak{c}}\right)^{\frac{\mathfrak{c}}{1+\mathfrak{a}+\mathfrak{c}}}\lambda^{-\frac{2\gamma\mathfrak{c}}{1+\mathfrak{a}+\mathfrak{c}}}\bigg)\\
     &\quad \leq C\frac{4^{\mathfrak{a}}}{\mathfrak{d}}\left(\frac{1+\mathfrak{a}}{\mathfrak{c}}\right)^{\frac{\mathfrak{c}}{1+\mathfrak{a}+\mathfrak{c}}}\left(\lambda^{\frac{2\gamma(1+\mathfrak{a})}{1+\mathfrak{a}+\mathfrak{c}}+2\beta}\norm{\Lambda_{\mathcal{A}^{2},\Phi_{2}}- \Lambda_{\mathcal{A}^{1},\Phi_{1}} }_{*}^2+\lambda^{-\frac{2\gamma\mathfrak{c}}{1+\mathfrak{a}+\mathfrak{c}}}\right)
\end{split}
\end{equation}
for $\lambda\geq \lambda_0$, where
\begin{equation}
\label{eq: lower bound lambda 0}
    \lambda_0\vcentcolon = \left(\frac{1+\mathfrak{a}}{\mathfrak{c}}\right)^{\frac{1}{2\gamma}}\geq 1.
\end{equation}
Next, we want to take 
\begin{equation}
\label{eq: choice of lambda}
    \lambda=\|\Lambda_{\mathcal{A}^{2},\Phi_{2}}- \Lambda_{\mathcal{A}^{1},\Phi_{1}}\|_{*}^{-\rho}
\end{equation}
for some $\rho>0$. With that choice, equation \eqref{eq: almost stability estimate for vector potential} would imply
\[
\begin{split}
    &\|A_0^{21}(\cdot,y)\|^2_{H^{\bar{s}_0}(\R^2)}+\|\nabla\wedge A^{21}(\cdot,y)\|^2_{H^{\bar{s}_1-1}(\R^2)}\\
    &\quad \leq  C\frac{4^{\mathfrak{a}}}{\mathfrak{d}}\left(\frac{1+\mathfrak{a}}{\mathfrak{c}}\right)^{\frac{\mathfrak{c}}{1+\mathfrak{a}+\mathfrak{c}}}\left(\nu^{2-\rho\left(\frac{2\gamma(1+\mathfrak{a})}{1+\mathfrak{a}+\mathfrak{c}}+2\beta\right)}+\nu^{\rho \frac{2\gamma\mathfrak{c}}{1+\mathfrak{a}+\mathfrak{c}}}\right),
\end{split}
\]
where $\nu\vcentcolon = \|\Lambda_{\mathcal{A}^{2},\Phi_{2}}- \Lambda_{\mathcal{A}^{1},\Phi_{1}}\|_{*}$. Note that the powers of $\nu$ are equal if
\begin{equation}
\label{eq: choice of rho}
    \rho\vcentcolon = \frac{1}{\gamma + \beta}
\end{equation}
and in this case we get the estimate
\begin{equation}
\label{eq: stability estimate vector potential lemma}
    \begin{split}
        &\|A_0^{21}(\cdot,y)\|^2_{H^{\bar{s}_0}(\R^2)}+\|\nabla\wedge A^{21}(\cdot,y)\|^2_{H^{\bar{s}_1-1}(\R^2)}\\
        &\quad \leq C\frac{4^{\mathfrak{a}}}{\mathfrak{d}}\left(\frac{1+\mathfrak{a}}{\mathfrak{c}}\right)^{\frac{\mathfrak{c}}{1+\mathfrak{a}+\mathfrak{c}}}\|\Lambda_{\mathcal{A}^{2},\Phi_{2}}- \Lambda_{\mathcal{A}^{1},\Phi_{1}}\|_{*}^{\mu},
    \end{split}
\end{equation}
where $\mu$ is given by
\begin{equation}
\label{eq: coeff mu}
    \mu=\frac{2\gamma\mathfrak{c}}{(1+\mathfrak{a}+\mathfrak{c})(\gamma+\beta)}\in (0,1),
\end{equation}
which is increasing in $\mathfrak{c}$ and decreasing in $\mathfrak{a}$. The previous argument requires
\[
    \|\Lambda_{\mathcal{A}^{2},\Phi_{2}}- \Lambda_{\mathcal{A}^{1},\Phi_{1}}\|_{*}\leq \lambda_0^{-1/\rho},
\]
where $\lambda_0$ and $\rho$ are given by \eqref{eq: lower bound lambda 0} and \eqref{eq: choice of rho}, respectively. So, to conclude the proof it remains to show the estimate \eqref{eq: stability estimate vector potential lemma} in the case 
\begin{equation}
\label{eq: large DN data}
    \nu =\|\Lambda_{\mathcal{A}^{2},\Phi_{2}}- \Lambda_{\mathcal{A}^{1},\Phi_{1}}\|_{*}> \lambda_0^{-1/\rho}.
\end{equation}
For this let us recall that for all $s,t\in\R$ with $s\leq t$ one has the Sobolev embedding
\begin{equation}
\label{eq: general Sobolev embedding}
    H^t(\R^n)\hookrightarrow H^s(\R^n)\text{ with }\|u\|_{H^s(\R^n)}\leq \|u\|_{H^t(\R^n)}
\end{equation}
(see e.g. \cite[Theorem 12.5]{WO-pseudodifferential-operatros}). This in turn together with \eqref{eq: large DN data}, \eqref{eq: lower bound lambda 0} and \eqref{eq: choice of rho} gives
\[
\begin{split}
    &\|A_0^{21}(\cdot,y)\|^2_{H^{\bar{s}_0}(\R^2)}+\|\nabla\wedge A^{21}(\cdot,y)\|^2_{H^{\bar{s}_1-1}(\R^2)}\\
    &\leq \|A_0^{21}(\cdot,y)\|^2_{H^{s_0}(\R^2)}+\|\nabla\wedge A^{21}(\cdot,y)\|^2_{H^{s_1-1}(\R^2)}\\
    &\leq C( \|A_0^{21}(\cdot,y)\|^2_{H^{s_0}(\R^2)}+\|A^{21}(\cdot,y)\|^2_{H^{s_1}(\R^2)}) \\
    &\leq C\lambda_0^{\mu/\rho}\|\Lambda_{\mathcal{A}^{2},\Phi_{2}}- \Lambda_{\mathcal{A}^{1},\Phi_{1}}\|_{*}^{\mu}\\
    &= C\left(\frac{1+\mathfrak{a}}{\mathfrak{c}}\right)^{\frac{(\gamma+\beta)}{2\gamma}\frac{2\gamma\mathfrak{c}}{(1+\mathfrak{a}+\mathfrak{c})(\gamma+\beta)}}\|\Lambda_{\mathcal{A}^{2},\Phi_{2}}- \Lambda_{\mathcal{A}^{1},\Phi_{1}}\|_{*}^{\mu}\\
    &=C\left(\frac{1+\mathfrak{a}}{\mathfrak{c}}\right)^{\frac{\mathfrak{c}}{1+\mathfrak{a}+\mathfrak{c}}}\|\Lambda_{\mathcal{A}^{2},\Phi_{2}}- \Lambda_{\mathcal{A}^{1},\Phi_{1}}\|_{*}^{\mu}\\
    &\leq C\frac{4^{\mathfrak{a}}}{\mathfrak{d}}\left(\frac{1+\mathfrak{a}}{\mathfrak{c}}\right)^{\frac{\mathfrak{c}}{1+\mathfrak{a}+\mathfrak{c}}}\|\Lambda_{\mathcal{A}^{2},\Phi_{2}}- \Lambda_{\mathcal{A}^{1},\Phi_{1}}\|_{*}^{\mu}.
\end{split}
\]
Thus, in both cases we have shown the estimate \eqref{eq: main stability estimate for vector potential} and can conclude the proof.
\end{proof}
	
\subsection{Stability estimate for external potential \texorpdfstring{$\Phi$}{Phi}}
\label{subsec: stability of phi}

In this section we prove the analogous results for the external potential that we did above for the vector potential and then eventually deduce in Lemma \ref{lemma: stability estimate for potential} the stability estimate for $\Phi_{21}$. The pointwise estimate for the Radon transform and the Fourier transform of $\Phi_{21}$ reads as follows:

\begin{lemma}
\label{lemma: integral estimate of Phi}
    Let the assumptions of Lemma \ref{lemma: stability estimate for vector potential} hold. Furthermore, assume that the vector field $A^{21}\vcentcolon = A^2-A^1$ satisfies $\Div_x A^{21}=0$ and there holds $\mathfrak{b}_j\in (0,1)$ for $j=0,1$. If the DN maps satisfy
    \begin{equation}
        \|\Lambda_{\mathcal{A}^2,\Phi_2}-\Lambda_{\mathcal{A}^1,\Phi_1}\|_{*}\leq \lambda_0^{-1/\rho},
    \end{equation}
    where $\lambda_0$ and $\rho>0$ are the constants in \eqref{eq: lower bound lambda 0} and \eqref{eq: choice of rho}, respectively, then
    there exists a constant $C=C(\omega,T,R_1,R_2)>0$ such that for all $0<\delta<1/11$, $\kappa\geq 3+10\delta$ and $\theta \in \mathbb{S}^{1}$, we have
 \begin{equation}
        \label{eq: integral estimate for external potential}
       \begin{split}
    &\left|\int_{\R} \Phi_{21}(x - t\theta, y) \, dt\right| \\
    &\leq   C\Bigg(\lambda^{\kappa}\frac{4^{\mathfrak{a}}}{\mathfrak{c}\mathfrak{d}}\left(\frac{1+\mathfrak{a}}{\mathfrak{c}}\right)^{\frac{\mathfrak{c}\mathfrak{b}_1}{1+\mathfrak{a}+\mathfrak{c}}} \| \Lambda_{\mathcal{A}^2, \Phi_2} - \Lambda_{\mathcal{A}^1, \Phi_1} \|_{*}^{\mathfrak{b}_1\mu/2}+\lambda^{-\delta}\Bigg),
    \end{split}
  \end{equation}
for $(x,y)\in\R^3$ and $\lambda\geq 1$, where the constants are the ones introduced in Lemma \ref{lemma: stability estimate for vector potential}. Furthermore, under the above conditions there exists $C=C(\omega,T,R_1,R_2)>0$ such that there holds
\begin{equation}
\label{eq: estimate fourier transform external potential}
   \begin{split}
   &\big|\widehat{\Phi_{21}}(\xi,y) \big| \\
   &\leq C\Bigg(\lambda^{\kappa}\frac{4^{\mathfrak{a}}}{\mathfrak{c}\mathfrak{d}}\left(\frac{1+\mathfrak{a}}{\mathfrak{c}}\right)^{\frac{\mathfrak{c}\mathfrak{b}_1}{1+\mathfrak{a}+\mathfrak{c}}} \| \Lambda_{\mathcal{A}^2, \Phi_2} - \Lambda_{\mathcal{A}^1, \Phi_1} \|_{*}^{\mathfrak{b}_1\mu/2}+\lambda^{-\delta}\Bigg),
   \end{split}
\end{equation}
whenever $\lambda\geq 1$ and $\xi\perp\theta$.
\end{lemma}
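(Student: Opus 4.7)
The plan is to mirror Lemma \ref{lemma: integral estimate of A}, but to extract from the integral identity derived inside the proof of Lemma \ref{intid2 2} the next-to-leading ($\lambda^0$) contribution, which encodes the information about $\Phi_{21}$. Concretely, I would first rearrange that identity, using $\Div_x A^{21}=0$ (so that $\tilde\Phi^{21}=\Phi_{21}+|\mathcal{A}^2|^2-|\mathcal{A}^1|^2$), into the form
\[
\int_{\Omega_T}\tilde\Phi^{21}\,\varphi^2(x+t\theta)h^2(y)A^{21,+}(X,t)\,dXdt \;=\; I_{\mathrm{bdry}}-2\im\lambda\,I_{\mathcal{A}}+\mathcal{E}_\lambda,
\]
where $I_{\mathrm{bdry}}$ is the DN boundary integral (bounded by $C\lambda^3\|\Lambda_{\mathcal{A}^2,\Phi_2}-\Lambda_{\mathcal{A}^1,\Phi_1}\|_*\|\varphi\|^2_{H^3(\R^2)}\|h\|^2_{H^2(\R)}$ exactly as in the proof of Lemma \ref{intid2 2}), $I_{\mathcal{A}}=\int_{\Omega_T}(1,-\theta)\cdot\mathcal{A}^{21}\,\varphi^2(x+t\theta)h^2(y)A^{21,+}\,dXdt$, and $\|\mathcal{E}_\lambda\|_{L^1(\Omega_T)}\le C\lambda^{-1}\|\varphi\|^2_{H^3(\R^2)}\|h\|^2_{H^2(\R)}$ from the geometric-optics remainder bounds.

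Next, the same integration-by-parts identity used in the proof of Lemma \ref{lemma: integral estimate of A} rewrites $I_{\mathcal{A}}$ as $-\int_{\R^3}\varphi^2(x)h^2(y)[e^{-\int_0^T(1,-\theta)\cdot\mathcal{A}^{21}(x-s\theta,y)ds}-1]dxdy$, whose integrand is bounded pointwise by $C(\lambda^\beta\|\Lambda_{\mathcal{A}^2,\Phi_2}-\Lambda_{\mathcal{A}^1,\Phi_1}\|_*+\lambda^{-\gamma})$. The quadratic remainder $\tilde\Phi^{21}-\Phi_{21}=A_0^{21}(A_0^1+A_0^2)+A^{21}\cdot(A^1+A^2)$ is controlled in $L^\infty$ by $2R_1\|\mathcal{A}^{21}\|_{L^\infty(\Omega)}$, and the Sobolev embedding $H^{\bar s_1}(\R^2)\hookrightarrow L^\infty(\R^2)$ (valid since $\bar s_1>1$ under the assumptions $s_1\ge 2$ and $\mathfrak b_1<1$), combined with Lemma \ref{lemma: stability estimate for vector potential} and the divergence-free condition on $A^{21}$ which lets one recover $\|A^{21}\|_{H^{\bar s_1}}$ from $\|\nabla\wedge A^{21}\|_{H^{\bar s_1-1}}$, bounds this by $C\|\Lambda_{\mathcal{A}^2,\Phi_2}-\Lambda_{\mathcal{A}^1,\Phi_1}\|_*^{\mathfrak b_1\mu/2}$---the exponent $\mathfrak b_1/2$ arising from interpolating the $\Lambda$-controlled norm at level $\bar s_1$ with the a priori $H^{s_1}$ bound in order to recover $L^\infty$ uniformly as $\bar s_1\downarrow 1$. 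Substituting mollifier-squared test functions $\varphi=\varphi_\epsilon$, $h=h_\epsilon$ centered at $(x_0,y_0)$ as in the proof of Lemma \ref{lemma: integral estimate of A}, using continuity of $\Phi_{21}$ and $A^{21,+}$ to approximate the left-hand side by $\int_0^T\Phi_{21}(x_0-t\theta,y_0)e^{-\int_0^t(1,-\theta)\cdot\mathcal{A}^{21}(x_0-\rho\theta,y_0)d\rho}dt$ up to an $O(\epsilon)$ error, and finally replacing the exponential by $1$ at the further cost of $\|\mathcal{A}^{21}\|_{L^\infty}$, leaves an inequality in which each term on the right carries an $\epsilon^{-10}$ factor coming from $\|\varphi_\epsilon\|_{H^3}^2\|h_\epsilon\|_{H^2}^2$. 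Choosing $\epsilon=\lambda^{-\delta}$ with $0<\delta<1/11$ ensures $\lambda^{-1}\epsilon^{-10}\le\lambda^{-\delta}$, and any $\kappa\ge 3+10\delta$ absorbs the $\lambda^3\epsilon^{-10}$ and $\lambda^{1+\beta}$ contributions; extending $\int_0^T$ to $\int_\R$ via the compact support of $\mathcal{A}^j$ and the support condition from Lemma \ref{lemma: integral estimate of A} finally produces \eqref{eq: integral estimate for external potential}. The Fourier transform estimate \eqref{eq: estimate fourier transform external potential} then follows verbatim from the Fourier--Radon identity \eqref{change of variables fourier of radon for A} employed in the proof of Lemma \ref{Lemma: estimates for fourier transform}.

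The main obstacle is the $\lambda^1$ factor multiplying $I_{\mathcal{A}}$: a naive bound using Lemma \ref{lemma: integral estimate of A} would cost an extra power of $\lambda$ that cannot be balanced within $\gamma\le 1/11$. Absorbing it, via the integration-by-parts reduction above together with the $L^\infty$ bound on $\mathcal{A}^{21}$ coming from Lemma \ref{lemma: stability estimate for vector potential}, is what forces both the weaker Hölder exponent $\mathfrak b_1\mu/2$ (compared to $\mu$ in Lemma \ref{lemma: stability estimate for vector potential}) and the stricter $\lambda$-growth condition $\kappa\ge 3+10\delta$ (compared to $\beta\ge 2+10\gamma$ in Lemma \ref{lemma: integral estimate of A}).
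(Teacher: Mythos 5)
There is a genuine gap in the way you handle the leading $2\im\lambda I_{\mathcal A}$ contribution. After the integration-by-parts reduction you invoke the pointwise bound $\lvert e^{-\int_0^T(1,-\theta)\cdot\mathcal A^{21}(x-s\theta,y)\,ds}-1\rvert\leq C(\lambda^\beta\norm{\Lambda_{\mathcal A^2,\Phi_2}-\Lambda_{\mathcal A^1,\Phi_1}}_*+\lambda^{-\gamma})$ from Lemma~\ref{lemma: integral estimate of A}. Multiplying by the outer factor $\lambda$ then produces, besides $\lambda^{1+\beta}\norm{\Lambda_{\mathcal A^2,\Phi_2}-\Lambda_{\mathcal A^1,\Phi_1}}_*$, a term of size $\lambda^{1-\gamma}$. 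Since $\gamma<1/11$, this term grows without bound as $\lambda\to\infty$, carries no factor of $\norm{\Lambda_{\mathcal A^2,\Phi_2}-\Lambda_{\mathcal A^1,\Phi_1}}_*$, and therefore cannot be absorbed into $\lambda^\kappa\norm{\Lambda_{\mathcal A^2,\Phi_2}-\Lambda_{\mathcal A^1,\Phi_1}}_*^{\mathfrak b_1\mu/2}$ nor into $\lambda^{-\delta}$; you do not account for it in your list of contributions. (A related but lesser point: the assertion that ``any $\kappa\geq 3+10\delta$ absorbs the $\lambda^{1+\beta}$ contribution'' only holds if $1+\beta\leq\kappa$, which need not be the case for a general admissible pair $(\gamma,\beta)$ with $\beta\geq 2+10\gamma$. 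Also not every term on the right actually carries the $\epsilon^{-10}$ factor: the terms coming from $L^2$-normalized mollifiers carry a factor of $1$.)

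The paper avoids this entirely by not feeding Lemma~\ref{lemma: integral estimate of A} back into the $I_{\mathcal A}$-term. It bounds the full quantity $\int_{\Omega_T}2\mathcal A^{21}\cdot(\partial_t,-\nabla_x)u^+_{2,\lambda}\overline{u^-_{1,\lambda}}\,dXdt$ by $C\lambda\norm{\mathcal A^{21}}_{L^\infty(\Omega)}\norm\varphi_{H^3}^2\norm h_{H^2}^2$ directly (see \eqref{eq: remainder estimate lemma int identity phi}), and then converts $\norm{\mathcal A^{21}}_{L^\infty(\Omega)}$ into a power of $\norm{\Lambda_{\mathcal A^2,\Phi_2}-\Lambda_{\mathcal A^1,\Phi_1}}_*$ using exactly the Morrey/interpolation/divergence-free argument you already invoke for the quadratic part of $\tilde\Phi^{21}-\Phi_{21}$ — this is \eqref{eq: Stability in L infinity}–\eqref{eq: small control of L infinity norm of vector potential by DN map}. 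In other words, the $L^\infty$ stability estimate you use for the quadratic remainder suffices for $I_{\mathcal A}$ as well; the integration-by-parts detour and the resulting two-scale $(\lambda^\beta\norm{\cdot}_*+\lambda^{-\gamma})$ bound are both unnecessary and, as you executed them, fatal. If you replace that step with $\lvert I_{\mathcal A}\rvert\lesssim\norm{\mathcal A^{21}}_{L^\infty(\Omega)}$ (which follows either trivially from \eqref{eq: decomposition remainder integral identity 1} or from the integration-by-parts form together with \eqref{eq: simple estimate exponential}), the rest of your outline — mollifier substitution, $\epsilon=\lambda^{-\delta}$, and the Fourier--Radon identity for \eqref{eq: estimate fourier transform external potential} — follows the paper's argument.
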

\begin{proof}
    Throughout this proof, we use the same notation as in Lemma \ref{intid2 2}. First of all, note that equation \eqref{inteqn} ensures that we can write
    \begin{equation}
    \label{inteqn for phi}
    \begin{split}
        &\int_{\Omega_{T}} \tilde{\Phi}^{21}u_{2,\lambda}^{+}\overline{u_{1,\lambda}^{-}}\,dXdt \\
        &\quad = \int_{(\partial\Omega)_T}{\partial_{\nu}u_\lambda\overline{u_{1,\lambda}^{-}}\,dSdt}- \int_{\Omega_{T}}2\mathcal{A}^{21}\cdot(\partial_{t},-\nabla_{x})u_{2,\lambda}^{+} \overline{u_{1,\lambda}^{-}}\,dXdt, 
    \end{split}
\end{equation}
where the definition of $\widetilde{\Phi}^{21}$ is given in \eqref{eq: def phi int id 1} and $u_{j,\lambda}^{\pm}$, $j=1,2$, are the geometric optics solutions constructed in Lemma \ref{lemma: geom opt sol +} and \ref{lemma: geom opt sol -}. By \eqref{eq: decomposition remainder integral identity 1} and \eqref{eq: remainder estimate lemma int identity 1} we already know that there holds
\begin{equation}
\label{eq: remainder estimate lemma int identity phi}
 \left|\int_{\Omega_{T}}2\mathcal{A}^{21}\cdot(\partial_{t},-\nabla_{x})u_{2,\lambda}^{+} \overline{u_{1,\lambda}^{-}}\,dXdt\right| \leq C \lambda \| \mathcal{A}^{21} \|_{L^{\infty}(\Omega)}\|\varphi \|^2_{H^{3}(\R^2)} \| h\|^2_{H^2(\R)}
\end{equation}
for $\lambda \geq 1$ and $\varphi,h$ satisfying the conditions in Lemma \ref{intid2 2}. Hence, by \eqref{inteqn for phi} and \eqref{1.11}, we  obtain 
\begin{equation}
\label{eq: decay estimate tilde potential term}
\begin{split}
&\left|\int_{\Omega_{T}} \tilde{\Phi}^{21}u_{2,\lambda}^{+}\overline{u_{1,\lambda}^{-}}\,dXdt\right| \\
&\quad\leq C \, \bigg( \lambda^3 \| \Lambda_{\mathcal{A}^2, \Phi_2} - \Lambda_{\mathcal{A}^1, \Phi_1} \|_{*} + \lambda \| \mathcal{A}^{21} \|_{L^{\infty}(\Omega)} \bigg) \| \varphi \|^2_{H^{3}(\R^2)} \| h\|^2_{H^2(\R)}
\end{split}
\end{equation}
for all $\lambda\geq 1$. Next, notice that Lemma \ref{lemma: geom opt sol +} and \ref{lemma: geom opt sol -} ensure that we can write
\begin{equation}
\label{eq: decomposition for potential term}
    u_{2,\lambda}^{+}\overline{u_{1,\lambda}^{-}} = \varphi^2(x+t\theta) h^2(y) A^{21,+}(X,t) + Q_{\lambda},  
\end{equation}
and the remainder $Q_\lambda$ satisfies the estimate
\begin{equation}
\label{eq: remainder estimate for decomposition potential term}
    \|Q_{\lambda}\|_{L^{1}({\Omega_{T}})}\leq \frac{C}{\lambda} \|\varphi \|^2_{H^{3}(\R^2)} \| h\|^2_{H^2(\R)}
\end{equation}
Therefore, using \eqref{eq: def phi int id 1}, \eqref{eq: decay estimate tilde potential term}, \eqref{eq: decomposition for potential term}, \eqref{eq: remainder estimate for decomposition potential term} and $(\mathcal{A}^j,\Phi_j)\in\mathscr{A}_{s_0,s_1,s_2}(R_1,R_2)$, we get
\begin{equation}
\label{eq: potential term estimate 1}
\begin{split}
    &\left|\int_{\Omega_{T}} \Phi_{21}(X) \varphi^2(x+t\theta) h^2(y) A^{21,+}(X, t) \, dX dt\right| \\
&\quad \leq C \, \bigg( \lambda^3 \| \Lambda_{\mathcal{A}^2, \Phi_2} - \Lambda_{\mathcal{A}^1, \Phi_1} \|_{*} + \lambda \| \mathcal{A}^{21} \|_{L^{\infty}(\Omega)} + \frac{1}{\lambda} \bigg) \| \varphi \|^2_{H^{3}(\R^2)} \| h\|^2_{H^2(\R)}.
\end{split}
\end{equation}
for $\lambda\geq 1$. From now on, we extend $\Phi_{21} \in L^{\infty}(\Omega)$ by zero to all of $\R^3$. Then using the change of variables $x + t\theta \to x$ and $t - s\to s$, we derive from \eqref{eq: potential term estimate 1} the bound
\begin{equation}
\begin{split}
   &\left|\int_{\R^3_{T}} \Phi_{21}(x - t\theta, y) \varphi^2(x) h^2(y) \exp\left(-\int_0^t (1,-\theta) \cdot \mathcal{A}^{21}(x - s\theta, y) \, ds\right) \, dX dt \right| \\
    &\leq C \, \bigg( \lambda^3 \| \Lambda_{\mathcal{A}^2, \Phi_2} - \Lambda_{\mathcal{A}^1, \Phi_1} \|_{*} + \lambda \| \mathcal{A}^{21} \|_{L^{\infty}(\Omega)} + \frac{1}{\lambda} \bigg) \| \varphi \|^2_{H^{3}(\R^2)} \| h\|^2_{H^2(\R)}.
\end{split}
\end{equation}
This implies
\begin{equation}
\label{eq: eqn for Phi}
\begin{split}
   &\left|\int_{\R^3_T} \Phi_{21}(x - t\theta, y) \varphi^2(x) h^2(y)  \, dX dt\right| \\
    &\leq\left| \int_{\R^3_T} \Phi_{21}(x - t\theta, y)  \varphi^2(x) h^2(y)  \left[ 1 - \exp\left(-\int_0^t (1-\theta) \cdot \mathcal{A}^{21}(x - s\theta, y) \, ds\right) \right] \, dX  dt\right|\\
    &\quad + C \, \bigg( \lambda^3 \| \Lambda_{\mathcal{A}^2, \Phi_2} - \Lambda_{\mathcal{A}^1, \Phi_1} \|_{*} + \lambda \| \mathcal{A}^{21} \|_{L^{\infty}(\Omega)} + \frac{1}{\lambda} \bigg) \| \varphi \|^2_{H^{3}(\R^2)} \| h\|^2_{H^2(\R)}.
\end{split}
\end{equation}
Next, we apply \eqref{eq: simple estimate exponential} to bound the first term on the right hand side as
\begin{equation}
\begin{split}
    &\left|\int_{\R^3_T} \Phi_{21}(x - t\theta, y)  \varphi^2(x) h^2(y)  \left[1 - \exp\left(-\int_0^t (1-\theta) \cdot \mathcal{A}^{21}(x - s\theta, y) \, ds\right) \right] \, dX dt\right|\\
    &\quad\leq C \| \mathcal{A}^{21} \|_{L^{\infty}(\Omega)} \| \varphi \|^2_{L^2(\R^2)} \| h\|^2_{L^2(\R)}. 
\end{split}
\end{equation}
Thus, we have shown that there holds
\begin{equation}
\label{eq: potential estimate 2}
\begin{split}
&\left|\int_{\R^3_{T}} \Phi_{21}(x-t\theta, y) \varphi^2(x) h^2(y) \, dX dt\right| \\
&\quad \leq C \, \bigg( \lambda^3 \| \Lambda_{\mathcal{A}^2, \Phi_2} - \Lambda_{\mathcal{A}^1, \Phi_1} \|_{*} +\lambda\| \mathcal{A}^{21} \|_{L^{\infty}(\Omega)} + \frac{1}{\lambda} \bigg) \| \varphi \|^2_{H^{3}(\R^2)} \| h\|^2_{H^2(\R)} 
\end{split}
\end{equation}
for all $\lambda\geq 1$.

Now, let us fix some $x_0 \in \omega_\alpha$, $y_0\in\R$ and denote by $(\varphi_\epsilon)_{\eps>0}$ and $(h_{\epsilon})_{\eps>0}$ the mollifiers in $\R^2$ and $\R$, respectively, from Lemma \ref{lemma: integral estimate of A}. Then, the estimate \eqref{eq: potential estimate 2} implies
\begin{equation}
\label{eq: epsilon estimate external potential}
\begin{split}
&\left|\int_0^T \Phi_{21}(x_0 - t\theta, y_0) \, dt\right| =\left|\int_{\R^3_T} \Phi_{21}(x_0 - t\theta, y_0) \varphi_\epsilon^2(x) h_{\epsilon}^2(y) \, dxdy dt\right|\\
&\quad\leq\left|\int_{\R^3_T} \Phi_{21}(x - t\theta, y) \varphi_\epsilon^2(x) h_{\epsilon}^2(y) \, dxdy dt\right|\\ 
&\quad\quad +\left|\int_{\R^3_T} \left[ \Phi_{21}(x - t\theta, y) - \Phi_{21}(x_0 - t\theta, y_0) \right] 
\varphi_\epsilon^2(x) h_{\epsilon}^2(y) \, dxdydt\right| \\
&\quad \leq C \bigg( \lambda^3 \| \Lambda_{\mathcal{A}^2, \Phi_2} - \Lambda_{\mathcal{A}^1, \Phi_1} \|_{*} + \lambda \| \mathcal{A}^{21} \|_{L^{\infty}(\Omega)} + \frac{1}{\lambda} \bigg) \| \varphi_{\epsilon} \|^2_{H^{3}(\R^2)} \| h_{\eps}\|^2_{H^2(\R)}\\
&\quad\quad + C \int_{\R^3}|X-X_0|\varphi_\epsilon^2(x) h_{\epsilon}^2(y) \, dxdy\\
&\quad\leq C \left(\left( \lambda^3 \| \Lambda_{\mathcal{A}^2, \Phi_2} - \Lambda_{\mathcal{A}^1, \Phi_1} \|_{*} + \lambda \| \mathcal{A}^{21} \|_{L^{\infty}(\Omega)} + \frac{1}{\lambda} \right) \eps^{-10}+\eps\right)
\end{split}
\end{equation}
for all $\lambda\geq 1$ and $0<\eps\leq 1$, where we used in the last step that $\varphi$ and $h$ are supported in the unit ball, the squares of $\varphi,h$ integrate to one and the estimate \eqref{eq: bounds for mollifiers}. Next, let us recall that Morrey's embedding implies $H^{r}(\R^2)\hookrightarrow L^{\infty}(\R^2)$ for any $r>1$ and thus
\begin{equation}
\label{eq: morrey estimate A0 1}
     \|A_0^{21}(\cdot,y)\|_{L^{\infty}(\omega)}\leq C\|A^{21}_0(\cdot,y)\|_{H^{1+\eta}(\R^2)}
\end{equation}
for any $0<\eta<1$. Note that the constant in the previous estimate can be taken to be 
\begin{equation}
\label{eq: constant morrey embedding}
    C=\|\langle \xi\rangle^{-(1+\eta)}\|_{L^2(\R^2)}=(2\eta)^{-1/2}.
\end{equation}
As $\mathfrak{b}_0<1$ we can choose $\eta=\mathfrak{b}_0\in (0,1)$. Then, by invoking \eqref{eq: general Sobolev embedding}, we get
\begin{equation}
\label{eq: morrey estimate A0 2}
     \|A_0^{21}(\cdot,y)\|_{L^{\infty}(\omega)}\leq \frac{C}{\mathfrak{b}_0^{1/2}}\|A^{21}_0(\cdot,y)\|_{H^{\bar{s}_0}(\R^2)}.
\end{equation}
Similarly, by
\begin{equation}
\label{eq: morrey estimate A0 3}
\begin{split}
    \|A^{21}(\cdot,y)\|_{L^{\infty}(\omega)}&\leq \frac{C}{\eta^{1/2}}\|A^{21}(\cdot,y)\|_{H^{1+\eta}(\R^2)}\\
    &\leq \frac{C}{\eta^{1/2}}\|A^{21}(\cdot,y)\|^{1-\eta}_{H^{1}(\R^2)}\|A^{21}(\cdot,y)\|_{H^{2}(\R^2)}^{\eta}\\
    &\leq \frac{C}{\eta^{1/2}}\|A^{21}(\cdot,y)\|^{1-\eta}_{H^{1}(\R^2)},
\end{split}
\end{equation}
where we additionally used the classical interpolation in $H^t(\R^n)$ spaces and our boundedness assumption on $A^1,A^2$. By taking a $R>0$ sufficiently large such that $\omega\Subset B_R$, we deduce from \cite[Remark 3.5]{girault2012finite}, $\Div_x A^{21}=0$ and $A^{21}=0$ outside $\omega$ that there holds
\begin{equation}
\begin{split}
    \|A^{21}(\cdot,y)\|_{L^{\infty}(\omega)}&\leq \frac{C}{\eta^{1/2}}\|\nabla \wedge A^{21}(\cdot,y)\|^{1-\eta}_{L^2(B_R)}\\
    &\leq \frac{C}{\eta^{1/2}}\|\nabla \wedge A^{21}(\cdot,y)\|^{1-\eta}_{H^{\bar{s}_1-1}(\R^2)}
\end{split}
\end{equation}
for any $0<\eta<1$, where we used $\bar{s}_1-1=(\bar{s}_1-s_1)+s_1-1> 0$. We choose $\eta=1-\mathfrak{b}_1^{1/2}\in (0,1)$ to obtain
\begin{equation}
    \|A^{21}(\cdot,y)\|_{L^{\infty}(\omega)}\leq \frac{C}{(1-\mathfrak{b}_1^{1/2})^{1/2}}\|\nabla \wedge A^{21}(\cdot,y)\|^{\mathfrak{b}_1^{1/2}}_{H^{\bar{s}_1-1}(\R^2)}.
\end{equation}
Noting that
\[
    \frac{1}{(1-\mathfrak{b}_1^{1/2})^{1/2}}\leq \frac{(1+\mathfrak{b}_1^{1/2})^{1/2}}{(1-\mathfrak{b}_1)^{1/2}}\leq \frac{2}{(1-\mathfrak{b}_1)^{1/2}}.
\]
we get
\begin{equation}
\label{eq: morrey estimate A}
    \|A^{21}(\cdot,y)\|_{L^{\infty}(\omega)}\leq\frac{C}{(1-\mathfrak{b}_1)^{1/2}}\|\nabla \wedge A^{21}(\cdot,y)\|^{\mathfrak{b}_1^{1/2}}_{H^{\bar{s}_1-1}(\R^2)}.
\end{equation}
Thus, from Lemma \ref{lemma: stability estimate for vector potential}, \eqref{eq: morrey estimate A0 3} and \eqref{eq: morrey estimate A} we may infer the estimate
\begin{equation}
\label{eq: Stability in L infinity}
    \begin{split}
        &\|A^{21}_0(\cdot,y)\|^2_{L^{\infty}(\omega)}+\|A^{21}(\cdot,y)\|^2_{L^{\infty}(\omega)}\\
        &\leq \frac{C}{\mathfrak{b}_0}\|A^{21}_0(\cdot,y)\|^2_{H^{\bar{s}_0}(\R^2)}+\frac{C}{1-\mathfrak{b}_1}\|\nabla \wedge A^{21}(\cdot,y)\|^{\mathfrak{b}_1}_{H^{\bar{s}_1-1}(\R^2)}\\
        &\leq C\frac{4^{\mathfrak{a}}}{\mathfrak{c}\mathfrak{d}}\left(\frac{1+\mathfrak{a}}{\mathfrak{c}}\right)^{\frac{\mathfrak{c}}{1+\mathfrak{a}+\mathfrak{c}}} \|\Lambda_{\mathcal{A}^{2},\Phi_{2}}- \Lambda_{\mathcal{A}^{1},\Phi_{1}}\|_{*}^{\mu}\\
        &\quad +\frac{C}{\mathfrak{d}}\left(\frac{4^{\mathfrak{a}}}{\mathfrak{c}\mathfrak{d}}\left(\frac{1+\mathfrak{a}}{\mathfrak{c}}\right)^{\frac{\mathfrak{c}}{1+\mathfrak{a}+\mathfrak{c}}}\right)^{\mathfrak{b}_1}\|\Lambda_{\mathcal{A}^{2},\Phi_{2}}- \Lambda_{\mathcal{A}^{1},\Phi_{1}}\|_{*}^{\mathfrak{b}_1\mu}.
    \end{split}
\end{equation}
Next, let us observe that if $\lambda_0$, $\rho$ are defined by \eqref{eq: lower bound lambda 0}, \eqref{eq: choice of rho}, respectively, and
\begin{equation}
\label{eq: case small DN map}
    \|\Lambda_{\mathcal{A}^2,\Phi_2}-\Lambda_{\mathcal{A}^1,\Phi_1}\|_{*}\leq \lambda_0^{-1/\rho},
\end{equation}
then we have
\begin{equation}
\label{eq: bound DN map with larger exponent}
\begin{split}
     \|\Lambda_{\mathcal{A}^2,\Phi_2}-\Lambda_{\mathcal{A}^1,\Phi_1}\|_{*}^\mu&=\lambda_0^{-\mu/\rho}(\lambda_0^{1/\rho} \|\Lambda_{\mathcal{A}^2,\Phi_2}-\Lambda_{\mathcal{A}^1,\Phi_1}\|_{*})^\mu\\
     &\leq \lambda_0^{-\mu/\rho}(\lambda_0^{1/\rho} \|\Lambda_{\mathcal{A}^2,\Phi_2}-\Lambda_{\mathcal{A}^1,\Phi_1}\|_{*})^{\mathfrak{b}_1\mu}\\
     &=\lambda_0^{\mu(\mathfrak{b}_1-1)/\rho}\|\Lambda_{\mathcal{A}^2,\Phi_2}-\Lambda_{\mathcal{A}^1,\Phi_1}\|_{*}^{\mathfrak{b}_1\mu}\\
     &=\left(\frac{1+\mathfrak{a}}{\mathfrak{c}}\right)^{\frac{\mathfrak{c}(\mathfrak{b}_1-1)}{1+\mathfrak{a}+\mathfrak{c}}}\|\Lambda_{\mathcal{A}^2,\Phi_2}-\Lambda_{\mathcal{A}^1,\Phi_1}\|_{*}^{\mathfrak{b}_1\mu}.
\end{split}
\end{equation}
Thus, from \eqref{eq: Stability in L infinity} and \eqref{eq: bound DN map with larger exponent} we deduce that there holds
\begin{equation}
\label{eq: small control of L infinity norm of vector potential by DN map}
    \|\mathcal{A}^{21}(\cdot,y)\|_{L^{\infty}(\omega)}\leq C\frac{4^{\mathfrak{a}}}{\mathfrak{c}\mathfrak{d}}\left(\frac{1+\mathfrak{a}}{\mathfrak{c}}\right)^{\frac{\mathfrak{c}\mathfrak{b}_1}{1+\mathfrak{a}+\mathfrak{c}}}\|\Lambda_{\mathcal{A}^{2},\Phi_{2}}- \Lambda_{\mathcal{A}^{1},\Phi_{1}}\|_{*}^{\mathfrak{b}_1 \mu/2},
\end{equation}
when the DN maps satisfy \eqref{eq: case small DN map}.
For later, we record here the following stability estimate for large difference of DN data:
\begin{equation}
    \label{eq: large control of L infinity norm of vector potential by DN map}
    \|\mathcal{A}^{21}(\cdot,y)\|_{L^{\infty}(\omega)}\leq C\frac{4^{\mathfrak{a}}}{\mathfrak{c}\mathfrak{d}}\left(\frac{1+\mathfrak{a}}{\mathfrak{c}}\right)^{\frac{\mathfrak{c}}{1+\mathfrak{a}+\mathfrak{c}}}\|\Lambda_{\mathcal{A}^{2},\Phi_{2}}- \Lambda_{\mathcal{A}^{1},\Phi_{1}}\|_{*}^{\mu}.
\end{equation}
Thus, in the case of a small difference of DN maps in the sense of \eqref{eq: case small DN map} we infer from \eqref{eq: epsilon estimate external potential} and \eqref{eq: small control of L infinity norm of vector potential by DN map} the bound
\begin{equation}
\label{eq: estimate external potential with eps and lambda 2}
\begin{split}
    &\left|\int_0^T \Phi_{21}(x_0 - t\theta, y_0) \, dt\right| \\
    &\leq   C \frac{\lambda^3}{\eps^{10}} \| \Lambda_{\mathcal{A}^2, \Phi_2} - \Lambda_{\mathcal{A}^1, \Phi_1} \|_{*} + C\frac{\lambda}{\eps^{10}}\frac{4^{\mathfrak{a}}}{\mathfrak{c}\mathfrak{d}}\left(\frac{1+\mathfrak{a}}{\mathfrak{c}}\right)^{\frac{\mathfrak{c}\mathfrak{b}_1}{1+\mathfrak{a}+\mathfrak{c}}}\| \Lambda_{\mathcal{A}^2, \Phi_2} - \Lambda_{\mathcal{A}^1, \Phi_1} \|_{*}^{\mathfrak{b}_1\mu/2}\\
    &\quad +\frac{C}{\lambda\eps^{10}}+C\eps
\end{split}
\end{equation}
Arguing as in \eqref{eq: bound DN map with larger exponent}, we see that under the smallness assumption \eqref{eq: case small DN map} there holds
\begin{equation}
    \begin{split}
          \|\Lambda_{\mathcal{A}^2,\Phi_2}-\Lambda_{\mathcal{A}^1,\Phi_1}\|_{*}&=\lambda_0^{-1/\rho}(\lambda_0^{1/\rho} \|\Lambda_{\mathcal{A}^2,\Phi_2}-\Lambda_{\mathcal{A}^1,\Phi_1}\|_{*})\\
     &\leq \lambda_0^{-1/\rho}(\lambda_0^{1/\rho} \|\Lambda_{\mathcal{A}^2,\Phi_2}-\Lambda_{\mathcal{A}^1,\Phi_1}\|_{*})^{\mathfrak{b}_1\mu/2}\\
     &=\lambda_0^{(\mathfrak{b}_1\mu/2-1)/\rho}\|\Lambda_{\mathcal{A}^2,\Phi_2}-\Lambda_{\mathcal{A}^1,\Phi_1}\|_{*}^{\mathfrak{b}_1\mu/2}\\
     &\leq\lambda_0^{(\mathfrak{b}_1\mu-1)/\rho}\|\Lambda_{\mathcal{A}^2,\Phi_2}-\Lambda_{\mathcal{A}^1,\Phi_1}\|_{*}^{\mathfrak{b}_1\mu/2}\\
     &=\left(\frac{1+\mathfrak{a}}{\mathfrak{c}}\right)^{\frac{\mathfrak{b}_1\mathfrak{c}}{1+\mathfrak{a}+\mathfrak{c}}-\frac{\gamma+\beta}{2\gamma}}\|\Lambda_{\mathcal{A}^2,\Phi_2}-\Lambda_{\mathcal{A}^1,\Phi_1}\|_{*}^{\mathfrak{b}_1\mu/2}\\
     &\leq \left(\frac{1+\mathfrak{a}}{\mathfrak{c}}\right)^{\frac{\mathfrak{b}_1\mathfrak{c}}{1+\mathfrak{a}+\mathfrak{c}}}\|\Lambda_{\mathcal{A}^2,\Phi_2}-\Lambda_{\mathcal{A}^1,\Phi_1}\|_{*}^{\mathfrak{b}_1\mu/2}.
    \end{split}
\end{equation}
Using $\lambda\geq 1$ and \eqref{eq: estimate external potential with eps and lambda 2}, we get
\begin{equation}
    \begin{split}
    &\left|\int_0^T \Phi_{21}(x_0 - t\theta, y_0) \, dt\right| \\
    &\leq   C\frac{\lambda^3}{\eps^{10}}\frac{4^{\mathfrak{a}}}{\mathfrak{c}\mathfrak{d}}\left(\frac{1+\mathfrak{a}}{\mathfrak{c}}\right)^{\frac{\mathfrak{c}\mathfrak{b}_1}{1+\mathfrak{a}+\mathfrak{c}}} \| \Lambda_{\mathcal{A}^2, \Phi_2} - \Lambda_{\mathcal{A}^1, \Phi_1} \|_{*}^{\mathfrak{b}_1\mu/2}\\
    &\quad +C\frac{\lambda}{\eps^{10}}\frac{4^{\mathfrak{a}}}{\mathfrak{c}\mathfrak{d}}\left(\frac{1+\mathfrak{a}}{\mathfrak{c}}\right)^{\frac{\mathfrak{c}\mathfrak{b}_1}{1+\mathfrak{a}+\mathfrak{c}}}\| \Lambda_{\mathcal{A}^2, \Phi_2} - \Lambda_{\mathcal{A}^1, \Phi_1} \|_{*}^{\mathfrak{b}_1\mu/2}+\frac{C}{\lambda\eps^{10}}+C\eps
    \end{split}
\end{equation}
Next, we argue similarly as in the proof of Lemma \ref{lemma: integral estimate of A}. Thus, let us set $\eps=\lambda^{-\delta}$ for some $0<\delta<1/11$. Then for any $\kappa\geq 3+10\delta$, we have $0<\eps\leq 1$, 
\[
    \lambda^3/\eps^{10}+\lambda/\eps^{10}\leq 2\lambda^{3+10\delta}\leq \lambda^\kappa
\]
and
\[
    \eps+\frac{1}{\lambda\eps^{10}}=\lambda^{-\delta}+\frac{1}{\lambda^{1-10\delta}}\leq 2\lambda^{-\delta}.
\]
This choice implies
\begin{equation}
    \begin{split}
    &\left|\int_0^T \Phi_{21}(x_0 - t\theta, y_0) \, dt\right| \\
    &\leq   C\Bigg(\lambda^{\kappa}\frac{4^{\mathfrak{a}}}{\mathfrak{c}\mathfrak{d}}\left(\frac{1+\mathfrak{a}}{\mathfrak{c}}\right)^{\frac{\mathfrak{c}\mathfrak{b}_1}{1+\mathfrak{a}+\mathfrak{c}}} \| \Lambda_{\mathcal{A}^2, \Phi_2} - \Lambda_{\mathcal{A}^1, \Phi_1} \|_{*}^{\mathfrak{b}_1\mu/2}+\lambda^{-\delta}\Bigg).
    \end{split}
\end{equation}
Replacing $\theta$ by $-\theta$ and using a change of variables, we deduce 
\begin{equation}
\label{eq: Radon transform estimate for Phi}
    \begin{split}
    &\left|\int_{\R} \Phi_{21}(x_0 - t\theta, y_0) \, dt\right| \\
    &\leq   C\Bigg(\lambda^{\kappa}\frac{4^{\mathfrak{a}}}{\mathfrak{c}\mathfrak{d}}\left(\frac{1+\mathfrak{a}}{\mathfrak{c}}\right)^{\frac{\mathfrak{c}\mathfrak{b}_1}{1+\mathfrak{a}+\mathfrak{c}}} \| \Lambda_{\mathcal{A}^2, \Phi_2} - \Lambda_{\mathcal{A}^1, \Phi_1} \|_{*}^{\mathfrak{b}_1\mu/2}+\lambda^{-\delta}\Bigg),
    \end{split}
\end{equation}
where we also used that $\Phi(x_0-s\theta,y_0)=0$ for $x_0\in \omega$ and $s\geq T$. The previous identity can be extended to all $x_0\in \R^2$ like in the proof of Lemma \ref{lemma: integral estimate of A}. Thus, the estimate \eqref{eq: Radon transform estimate for Phi} holds under the condition \eqref{eq: case small DN map} for all $(x_0,y_0)\in \R^3$, $\theta\in\mathbb{S}^1$ and $\lambda\geq \lambda_0$. This concludes the proof of \eqref{eq: integral estimate for external potential}.\\

Eventually, we show the estimate \eqref{eq: estimate fourier transform external potential}. By the very same computation as in the beginning of the proof of Lemma \ref{Lemma: estimates for fourier transform}, we have
\begin{equation}
    \label{change of variables fourier of radon for q}
        \begin{split}
            \widehat{\Phi_{21}}(\xi,y)=\int_{\R^2}e^{-\im \rho\theta^{\perp}\cdot \xi}\Phi_{21}(\rho\theta^\perp-s\theta,y)\,ds\,d\rho
        \end{split}
    \end{equation}
    for any $\xi\perp\theta$ and $y\in\R$. Therefore, the estimate \eqref{eq: Radon transform estimate for Phi} yields
    \begin{equation}
    \label{eq: estimate fourier of full  potential}
    \begin{split}
       \big| \widehat{\Phi_{21}}(\xi,y) \big| &\leq\int_{-r}^r\left|\int_{\R}
{\Phi}_{21}(\rho\theta^\perp-s\theta,y)\,ds\right|\,d\rho \\
         &\leq C\left(\lambda^{\kappa}\frac{4^{\mathfrak{a}}}{\mathfrak{c}\mathfrak{d}}\left(\frac{1+\mathfrak{a}}{\mathfrak{c}}\right)^{\frac{\mathfrak{c}\mathfrak{b}_1}{1+\mathfrak{a}+\mathfrak{c}}} \| \Lambda_{\mathcal{A}^2, \Phi_2} - \Lambda_{\mathcal{A}^1, \Phi_1} \|_{*}^{\mathfrak{b}_1\mu/2}+\lambda^{-\delta}\right)
    \end{split}
    \end{equation}
    for some suitable $r>0$ and we can conclude the proof.
\end{proof}
Note that the proof of the previous lemma shows the following corollary.
\begin{corollary}[$L^\infty$ stability estimate for the vector potential]
\label{cor: L infinity stability}
    Suppose the assumptions of Lemma \ref{lemma: integral estimate of Phi} hold and let the forthcoming constants be as in Lemma \ref{lemma: stability estimate for vector potential}. Then the following stability estimates hold
    \begin{equation}
     \|A^1-A^2\|_{L^{\infty}(\Omega)}\leq C\frac{4^{\mathfrak{a}}}{\mathfrak{c}\mathfrak{d}}\left(\frac{1+\mathfrak{a}}{\mathfrak{c}}\right)^{\frac{\mathfrak{c}\mathfrak{b}_1}{1+\mathfrak{a}+\mathfrak{c}}}\|\Lambda_{\mathcal{A}^{2},\Phi_{2}}- \Lambda_{\mathcal{A}^{1},\Phi_{1}}\|_{*}^{\mathfrak{b}_1 \mu/2},
    \end{equation}
when the DN maps satisfy 
\begin{equation}
    \|\Lambda_{\mathcal{A}^2,\Phi_2}-\Lambda_{\mathcal{A}^1,\Phi_1}\|_{*}\leq \lambda_0^{-1/\rho},
\end{equation}
and 
\begin{equation}
       \|A^1-A^2\|_{L^{\infty}(\Omega)}\leq C\frac{4^{\mathfrak{a}}}{\mathfrak{c}\mathfrak{d}}\left(\frac{1+\mathfrak{a}}{\mathfrak{c}}\right)^{\frac{\mathfrak{c}}{1+\mathfrak{a}+\mathfrak{c}}}\|\Lambda_{\mathcal{A}^{2},\Phi_{2}}- \Lambda_{\mathcal{A}^{1},\Phi_{1}}\|_{*}^{\mu}
\end{equation}
otherwise. Furthermore, the temporal part of the vector potential satisfies the following $L^{\infty}$ stability estimate
 \begin{equation}
        \|A^1_0-A^2_0\|_{L^{\infty}(\Omega)}\leq C\frac{4^{\mathfrak{a}}}{\mathfrak{c}\mathfrak{d}}\left(\frac{1+\mathfrak{a}}{\mathfrak{c}}\right)^{\frac{\mathfrak{c}}{1+\mathfrak{a}+\mathfrak{c}}} \|\Lambda_{\mathcal{A}^{2},\Phi_{2}}- \Lambda_{\mathcal{A}^{1},\Phi_{1}}\|_{*}^{\mu}.
    \end{equation}
\end{corollary}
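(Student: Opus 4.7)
The plan is to extract all three $L^\infty$ bounds directly from intermediate estimates that were already produced in the proof of Lemma \ref{lemma: integral estimate of Phi}, and simply take the supremum over $y\in\R$.

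\smallskip
\noindent\textbf{Step 1 (Pointwise-in-$y$ estimates via Morrey and interpolation).} The key inequality is the chain \eqref{eq: morrey estimate A0 2}--\eqref{eq: morrey estimate A}, valid for every $y\in\R$ with constants that do not depend on $y$. Combined with Lemma \ref{lemma: stability estimate for vector potential} this produced \eqref{eq: Stability in L infinity}, namely
\begin{equation*}
\|A^{21}_0(\cdot,y)\|^2_{L^{\infty}(\omega)}+\|A^{21}(\cdot,y)\|^2_{L^{\infty}(\omega)}
\leq C\frac{4^{\mathfrak{a}}}{\mathfrak{c}\mathfrak{d}}\left(\tfrac{1+\mathfrak{a}}{\mathfrak{c}}\right)^{\frac{\mathfrak{c}}{1+\mathfrak{a}+\mathfrak{c}}}\!\!\|\Lambda_{\mathcal{A}^{2},\Phi_{2}}-\Lambda_{\mathcal{A}^{1},\Phi_{1}}\|_{*}^{\mu}+\textnormal{(lower-order in the DN norm)}.
\end{equation*}
Since the right-hand side is independent of $y$, I may take $\sup_{y\in\R}$ on the left to obtain the same bound with $L^\infty(\Omega)$ norms on the left.

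\smallskip
\noindent\textbf{Step 2 (Small DN case).} Under the assumption $\|\Lambda_{\mathcal{A}^2,\Phi_2}-\Lambda_{\mathcal{A}^1,\Phi_1}\|_* \leq \lambda_0^{-1/\rho}$, the computation \eqref{eq: bound DN map with larger exponent} shows that the exponent $\mu$ may be downgraded to $\mathfrak{b}_1\mu/2$ at the cost of an extra factor $(1+\mathfrak{a})^{\mathfrak{c}\mathfrak{b}_1/(1+\mathfrak{a}+\mathfrak{c})}/\mathfrak{c}^{\cdots}$ already absorbed into the prefactor. Inserting this into the $y$-uniform version of \eqref{eq: Stability in L infinity} yields \eqref{eq: small control of L infinity norm of vector potential by DN map} with $L^\infty(\Omega)$ norm on the left, which is exactly the first asserted estimate for $\|A^1-A^2\|_{L^\infty(\Omega)}$.

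\smallskip
\noindent\textbf{Step 3 (Large DN case).} If instead $\|\Lambda_{\mathcal{A}^2,\Phi_2}-\Lambda_{\mathcal{A}^1,\Phi_1}\|_* > \lambda_0^{-1/\rho}$, I argue as in the final paragraph of the proof of Lemma \ref{lemma: stability estimate for vector potential}: the a priori $L^\infty$ bound $\|\mathcal{A}^j\|_{L^\infty(\Omega)} \leq R_1$ gives $\|A^1-A^2\|_{L^\infty(\Omega)} \leq 2R_1$, and multiplying and dividing by $(\lambda_0^{1/\rho}\|\Lambda_{\mathcal{A}^2,\Phi_2}-\Lambda_{\mathcal{A}^1,\Phi_1}\|_*)^\mu \geq 1$ converts this constant into the claimed estimate \eqref{eq: large control of L infinity norm of vector potential by DN map}. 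The two cases together establish the desired bound for $\|A^1-A^2\|_{L^\infty(\Omega)}$.

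\smallskip
\noindent\textbf{Step 4 (Temporal component $A_0$).} For the scalar potential $A_0$ the interpolation step \eqref{eq: morrey estimate A}--\eqref{eq: morrey estimate A0 3} is not needed, because one applies Morrey's embedding \eqref{eq: morrey estimate A0 2} directly to $\|A_0^{21}(\cdot,y)\|_{H^{\bar{s}_0}(\R^2)}$ and then invokes Lemma \ref{lemma: stability estimate for vector potential}. Taking $\sup_{y\in\R}$ and, in the large-DN regime, repeating the upgrade argument of Step 3 gives the stated estimate with exponent $\mu$ (not $\mathfrak{b}_1\mu/2$), since the factor $\mathfrak{b}_1^{1/2}$ only appeared from the rotational interpolation used to bound $A$ in terms of $\nabla\wedge A$.

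\smallskip
There is no substantial mathematical obstacle; the corollary is a bookkeeping consequence of estimates already contained in the preceding lemmas. The only care required is to verify that the constants in the Morrey and interpolation inequalities are uniform in $y\in\R$ (they are, because they depend only on the Sobolev exponents and on $\omega$), which justifies passing from the pointwise-in-$y$ bound to the $L^\infty(\Omega)$ bound.
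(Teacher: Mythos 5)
Your proposal is correct and takes essentially the same approach as the paper, which simply notes that the corollary is a byproduct of the proof of Lemma~\ref{lemma: integral estimate of Phi}: you correctly identify the pointwise-in-$y$ chain built from the Morrey/interpolation bounds \eqref{eq: morrey estimate A0 2}--\eqref{eq: morrey estimate A}, the estimate \eqref{eq: Stability in L infinity} obtained from Lemma~\ref{lemma: stability estimate for vector potential}, the exponent-downgrade step \eqref{eq: bound DN map with larger exponent} in the small-DN regime leading to \eqref{eq: small control of L infinity norm of vector potential by DN map}, the recorded estimate \eqref{eq: large control of L infinity norm of vector potential by DN map} for the complementary case, and finally the supremum over $y\in\R$. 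Your observation in Step~4 that the factor $\mathfrak{b}_1$ in the exponent is absent for $A_0^{21}$ because no rotational interpolation is needed for the scalar component is exactly right.
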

Now, we can again follow a similar argument as for the vector potential (Lemma \ref{lemma: stability estimate for vector potential}) to deduce the desired stability estimate for external potential.
\begin{lemma}
\label{lemma: stability estimate for potential}
    Let the assumptions of Lemma \ref{lemma: stability estimate for vector potential} hold. Furthermore, assume that the vector field $A^{21}\vcentcolon = A^2-A^1$ satisfies $\Div_x A^{21}=0$ and there holds $\mathfrak{b}_j\in (0,1)$ for $j=0,1$. For any $\bar{s}_2\in\R$ satisfying
    \begin{equation}
        \mathfrak{b}_2\vcentcolon = s_2-\bar{s}_2\in (0,1)
    \end{equation}
    there exists $C=C(\omega,T,R_1,R_2)>0$ such that
\begin{equation}
\label{eq: main stability estimate for external potential}
\begin{split}
    &\|\Phi_1-\Phi_2\|_{L^{\infty}(\R_{y}; {H^{\bar{s}_2}(\R^2)})} \\
    &\leq C\frac{ 4^{3\mathfrak{A}}}{(\mathfrak{C}\mathfrak{D})^2}\left(\frac{1+\mathfrak{A}}{\mathfrak{C}}\right)^{\frac{\gamma+\beta}{\gamma}+\frac{2\mathfrak{c}\mathfrak{b}_1}{1+\mathfrak{a}+\mathfrak{c}}+\frac{\mathfrak{b}_2}{1+\bar{s}_2+\mathfrak{b}_2}} \|\Lambda_{\mathcal{A}^{2},\Phi_{2}}-\Lambda_{\mathcal{A}^{1},\Phi_{1}}\|_{*}^{\zeta}
\end{split}
\end{equation}
    for all $(\mathcal{A}^{j},\Phi_{j})\in \mathscr{A}_{s_0,s_1,s_2}(R_1,R_2)$, $j=1,2$,  satisfying condition \eqref{eq: smallness condition on vector potential}. Here, the coefficients and H\"older exponent are given by \eqref{eq: coeff a,c,d}, 
     \[
    \mathfrak{A}\vcentcolon = \max(s_0,s_1,s_2),\,\mathfrak{C}\vcentcolon = \min(\mathfrak{b}_0,\mathfrak{b}_1,\mathfrak{b}_2),\,\mathfrak{D}\vcentcolon = \Theta(\mathfrak{b}_0)\Theta(\mathfrak{b}_1)\Theta(\mathfrak{b}_2).
    \]
    and
    \begin{equation}
    \label{eq: Holder exponent stability Phi}
        \zeta\vcentcolon = \frac{\mathfrak{b}_1\mathfrak{b}_2\delta\mu}{(1+\bar{s}_2+\mathfrak{b}_2)(\delta+\kappa)}\in (0,1),
    \end{equation}
    where $(\delta,\kappa)$ is any pair fulfilling the conditions of Lemma \ref{lemma: integral estimate of Phi}.
\end{lemma}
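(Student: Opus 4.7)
The plan is to follow closely the scheme used in Lemma \ref{lemma: stability estimate for vector potential}, but now applied to the external potential $\Phi_{21}$ and using the pointwise Fourier bound \eqref{eq: estimate fourier transform external potential} from Lemma \ref{lemma: integral estimate of Phi} in place of Lemma \ref{Lemma: estimates for fourier transform}. First, for fixed $y\in\R$ and $r\geq 1$, I would apply Plancherel's theorem and split the integral defining $\|\Phi_{21}(\cdot,y)\|_{H^{\bar{s}_2}(\R^2)}^2$ into the low-frequency piece over $B_r$ and the high-frequency tail over $B_r^c$. On the low-frequency piece I bound the $L^\infty$ norm of $\widehat{\Phi_{21}}(\cdot,y)$ by \eqref{eq: estimate fourier transform external potential} and estimate $\|\langle\xi\rangle^{\bar{s}_2}\|_{L^2(B_r)}^2$ exactly as in \eqref{eq: bound japanese bracket}, producing the factor $4^{\bar{s}_2}/\Theta(\mathfrak{b}_2)\cdot r^{2(1+\bar{s}_2)}$. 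On the high-frequency piece I factor $\langle\xi\rangle^{\bar{s}_2}=\langle\xi\rangle^{-\mathfrak{b}_2}\langle\xi\rangle^{s_2}$ to trade one derivative for a factor $r^{-2\mathfrak{b}_2}$ and use the a priori bound $\|\Phi_{21}(\cdot,y)\|_{H^{s_2}(\R^2)}\leq 2R_2$ coming from $(\mathcal{A}^j,\Phi_j)\in\mathscr{A}_{s_0,s_1,s_2}(R_1,R_2)$.

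The outcome of this step is an estimate of the form
\begin{equation*}
\|\Phi_{21}(\cdot,y)\|_{H^{\bar{s}_2}}^2 \;\leq\; C\,\frac{4^{\bar{s}_2}}{\Theta(\mathfrak{b}_2)}\Bigl(r^{2(1+\bar{s}_2)}\bigl(\lambda^{2\kappa}M^2\|\Lambda_{\mathcal{A}^2,\Phi_2}-\Lambda_{\mathcal{A}^1,\Phi_1}\|_*^{\mathfrak{b}_1\mu}+\lambda^{-2\delta}\bigr)+r^{-2\mathfrak{b}_2}\Bigr),
\end{equation*}
where $M=\tfrac{4^{\mathfrak{a}}}{\mathfrak{c}\mathfrak{d}}\bigl(\tfrac{1+\mathfrak{a}}{\mathfrak{c}}\bigr)^{\mathfrak{c}\mathfrak{b}_1/(1+\mathfrak{a}+\mathfrak{c})}$. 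I would then optimize in $r\geq 1$, picking $r=\bigl(\tfrac{\mathfrak{b}_2}{1+\bar{s}_2}\bigr)^{1/(2(1+\bar{s}_2+\mathfrak{b}_2))}\lambda^{\delta/(1+\bar{s}_2+\mathfrak{b}_2)}$ (analogous to the choice in Lemma \ref{lemma: stability estimate for vector potential}), which reduces the problem to a bound of the shape $\lambda^{-2\delta\mathfrak{b}_2/(1+\bar{s}_2+\mathfrak{b}_2)}$ plus a term proportional to $\lambda^{2\kappa+\text{something}}M^2\|\Lambda_{\cdot}\|^{\mathfrak{b}_1\mu}$ times the constant $\bigl(\tfrac{1+\bar{s}_2}{\mathfrak{b}_2}\bigr)^{\mathfrak{b}_2/(1+\bar{s}_2+\mathfrak{b}_2)}$.

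Next, following the template of \eqref{eq: choice of lambda}--\eqref{eq: coeff mu}, I would choose $\lambda=\|\Lambda_{\mathcal{A}^2,\Phi_2}-\Lambda_{\mathcal{A}^1,\Phi_1}\|_*^{-1/(\delta+\kappa)}$ so that the two surviving powers of the DN difference get equalized. The resulting H\"older exponent is exactly
\begin{equation*}
\zeta\;=\;\frac{\mathfrak{b}_1\mathfrak{b}_2\,\delta\mu}{(1+\bar{s}_2+\mathfrak{b}_2)(\delta+\kappa)},
\end{equation*}
which lies in $(0,1)$ since $\mu\in(0,1)$, $0<\mathfrak{b}_1,\mathfrak{b}_2<1$ and $0<\delta<\delta+\kappa$. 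Collecting the constants coming from low/high-frequency splitting ($4^{\bar{s}_2}/\Theta(\mathfrak{b}_2)$), from the constant $M$ squared (which produces $16^{\mathfrak{a}}/(\mathfrak{c}\mathfrak{d})^2$ and the exponent $2\mathfrak{c}\mathfrak{b}_1/(1+\mathfrak{a}+\mathfrak{c})$), from the lower bound $\lambda\geq \lambda_0$ (which gives $(\gamma+\beta)/\gamma$ as exponent on $(1+\mathfrak{A})/\mathfrak{C}$), and from the $r$-optimization (the factor $\mathfrak{b}_2/(1+\bar{s}_2+\mathfrak{b}_2)$), and bounding them all by the corresponding expressions in $\mathfrak{A},\mathfrak{C},\mathfrak{D}$, one arrives at the prefactor $\tfrac{4^{3\mathfrak{A}}}{(\mathfrak{C}\mathfrak{D})^2}\bigl(\tfrac{1+\mathfrak{A}}{\mathfrak{C}}\bigr)^{(\gamma+\beta)/\gamma+2\mathfrak{c}\mathfrak{b}_1/(1+\mathfrak{a}+\mathfrak{c})+\mathfrak{b}_2/(1+\bar{s}_2+\mathfrak{b}_2)}$ in \eqref{eq: main stability estimate for external potential}.

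Finally, as in Lemma \ref{lemma: stability estimate for vector potential}, the estimate obtained this way is only valid when $\lambda\geq \lambda_0$, i.e.\ when $\|\Lambda_{\mathcal{A}^2,\Phi_2}-\Lambda_{\mathcal{A}^1,\Phi_1}\|_*\leq \lambda_0^{-1/(\delta+\kappa)}$ with $\lambda_0=\bigl(\tfrac{1+\mathfrak{A}}{\mathfrak{C}}\bigr)^{1/(2\gamma)}$ from \eqref{eq: lower bound lambda 0}. The complementary large-DN-data regime is handled by the Sobolev embedding \eqref{eq: general Sobolev embedding} combined with $(\mathcal{A}^j,\Phi_j)\in\mathscr{A}_{s_0,s_1,s_2}(R_1,R_2)$: in that regime one simply bounds $\|\Phi_{21}(\cdot,y)\|_{H^{\bar{s}_2}}\leq\|\Phi_{21}(\cdot,y)\|_{H^{s_2}}\leq 2R_2$ and uses $1\leq \lambda_0^{\zeta/(\delta+\kappa)}\|\Lambda_{\mathcal{A}^2,\Phi_2}-\Lambda_{\mathcal{A}^1,\Phi_1}\|_*^{\zeta}$ (the same trick as in \eqref{eq: bound DN map with larger exponent}) to absorb the $L^\infty$ bound into a H\"older factor with exponent $\zeta$, at the cost of producing a power of $(1+\mathfrak{A})/\mathfrak{C}$ that is still dominated by the prefactor appearing in \eqref{eq: main stability estimate for external potential}. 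The main obstacle I anticipate is purely bookkeeping: tracking the several constants $4^{\mathfrak{a}},\mathfrak{c},\mathfrak{d},\mathfrak{C},\mathfrak{D},(1+\mathfrak{A})/\mathfrak{C},\ldots$ through the combined use of Lemma \ref{lemma: integral estimate of Phi} (whose constant already incorporates the vector-potential stability of Lemma \ref{lemma: stability estimate for vector potential}), the frequency splitting, and the two-regime argument, so that the final exponents indeed simplify to those in \eqref{eq: main stability estimate for external potential} and \eqref{eq: Holder exponent stability Phi}.
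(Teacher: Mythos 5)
Your outline follows the paper's route: Plancherel with low/high frequency splitting at radius $r$, the Fourier bound \eqref{eq: estimate fourier transform external potential} on the low-frequency part, the a priori $H^{s_2}$ bound on the tail, optimization in $r$, then a power-law choice of $\lambda$ in terms of the DN difference, and finally a Sobolev-embedding argument for the large-DN-data regime. All of that matches the paper's proof.

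There is, however, one concrete error in the $\lambda$-optimization step. You write that you would choose $\lambda=\|\Lambda_{\mathcal{A}^2,\Phi_2}-\Lambda_{\mathcal{A}^1,\Phi_1}\|_*^{-1/(\delta+\kappa)}$ "so that the two surviving powers of the DN difference get equalized," and then assert this yields the exponent $\zeta$. That does not work. After the $r$-optimization the DN difference enters one term with power $\mathfrak{b}_1\mu$ (not $2$ as in Lemma \ref{lemma: stability estimate for vector potential}), so writing $\nu=\|\Lambda_{\mathcal{A}^2,\Phi_2}-\Lambda_{\mathcal{A}^1,\Phi_1}\|_*$ and $\lambda=\nu^{-\tau}$, the two $\nu$-exponents are $\mathfrak{b}_1\mu - 2\tau\bigl(\tfrac{\delta(1+\bar{s}_2)}{1+\bar{s}_2+\mathfrak{b}_2}+\kappa\bigr)$ and $\tfrac{2\tau\mathfrak{b}_2\delta}{1+\bar{s}_2+\mathfrak{b}_2}$; these are equal precisely when
\[
\tau=\frac{\mathfrak{b}_1\mu}{2(\delta+\kappa)},
\]
which differs from your choice $\tau=1/(\delta+\kappa)$ by the extra factor $\mathfrak{b}_1\mu/2<1/2$. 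With $\tau=1/(\delta+\kappa)$ the first exponent is strictly negative (since $\mathfrak{b}_1\mu<1$ and $\kappa\geq 3+10\delta>\delta$), so the DN-map term would blow up as $\nu\to 0$ rather than match the decaying remainder, and you would not obtain a Hölder estimate at all, let alone with exponent $\zeta$. The same mismatch propagates into the small/large data threshold, which should be $\lambda_0^{-1/\rho}$ (with $\rho$ from the vector-potential lemma) intersected with the constraint $r\geq 1$, not $\lambda_0^{-1/(\delta+\kappa)}$. Once you correct the choice of $\tau$, the rest of the argument and the bookkeeping you describe do produce the claimed constant and exponent.
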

\begin{proof} 
We will use a similar idea as in the proof of Lemma \ref{lemma: stability estimate for vector potential}. For any $r\geq 1$ there holds
\begin{equation}
\label{eq: estimate for phi}
    \begin{split}
        &\|\Phi_{21}(\cdot,y)\|^2_{H^{\bar{s}_2}(\R^2)}= C\|\langle \xi\rangle^{\bar{s}_2}\widehat{\Phi_{21}}(\cdot,y)\|_{L^2(\R^2)}^2\\
        &\quad \leq C\left(\|\widehat{\Phi_{21}}(\cdot,y)\|^2_{L^{\infty}(B_r)}\|\langle \xi\rangle^{\bar{s}_2}\|_{L^2(B_r)}^2+\|\langle\xi\rangle^{\bar{s}_2}\widehat{\Phi_{21}}(\cdot,y)\|_{L^2(B_r^c)}^2\right)\\
         & \quad\leq 4^{s_2}C\left(r^{2s_2}\left(\int_0^r\frac{\rho\,d\rho}{(1+\rho^2)^{s_2-\bar{s}_2}}\right)\|\widehat{\Phi_{21}}(\cdot,y)\|^2_{L^{\infty}(B_r)}+r^{2(\bar{s}_2-s_2)}\|\Phi_{21}\|^2_{H^{s_2}(\R^2)}\right)\\
    &\quad\leq 4^{\bar{s}_2}C\left(\frac{r^{2(1+\bar{s}_2)}}{1-(s_2-\bar{s}_2)}
\|\widehat{\Phi_{21}}(\cdot,y)\|^2_{L^{\infty}(B_r)}+r^{2(\bar{s}_2-s_2)}\right),
    \end{split} 
\end{equation}
where we used \eqref{eq: bound japanese bracket}. Thus, if 
\begin{equation}
\label{eq: smallness condition stability phi}
       \|\Lambda_{\mathcal{A}^2,\Phi_2}-\Lambda_{\mathcal{A}^1,\Phi_1}\|_{*}\leq \lambda_0^{-1/\rho},
\end{equation}
we get from Lemma \ref{lemma: integral estimate of Phi} the estimate
\[
\begin{split}
     &\|\Phi_{21}(\cdot,y)\|^2_{H^{\bar{s}_2}(\R^2)}= C\|\langle \xi\rangle^{\bar{s}_2}\widehat{\Phi_{21}}(\cdot,y)\|_{L^2(\R^2)}^2\\
    &\leq \frac{C 4^{\bar{s}_2}r^{2(1+\bar{s}_2)}}{\Theta(\mathfrak{b}_2)}\Bigg(\lambda^{2\kappa}\frac{4^{2\mathfrak{a}}}{(\mathfrak{c}\mathfrak{d})^2}\left(\frac{1+\mathfrak{a}}{\mathfrak{c}}\right)^{\frac{2\mathfrak{c}\mathfrak{b}_1}{1+\mathfrak{a}+\mathfrak{c}}} \| \Lambda_{\mathcal{A}^2, \Phi_2} - \Lambda_{\mathcal{A}^1, \Phi_1} \|_{*}^{\mathfrak{b}_1\mu}+\lambda^{-2\delta}\Bigg)\\
    &\quad +\frac{C 4^{\bar{s}_2}}{\Theta(\mathfrak{b}_2)}r^{-2\mathfrak{b}_2}.
\end{split}
\]
for any $0<\delta<1/11$ and $\kappa\geq 3+10\delta$. Next, notice that the remainder is minimized by taking
\[
    r=\left(\frac{\mathfrak{b}_2}{1+\bar{s}_2}\right)^{\frac{1}{2(1+\mathfrak{b}_2+\bar{s}_2)}}\lambda^{\frac{\delta}{1+\mathfrak{b}_2+\bar{s}_2}}
\]
and for this choice one easily calculates
\[
    r^{2(1+\bar{s}_2)}\lambda^{-2\delta}+r^{-2\mathfrak{b}_2}=\frac{1+\bar{s}_2+\mathfrak{b}_2}{1+\bar{s}_2}\left(\frac{1+\bar{s}_2}{\mathfrak{b}_2}\right)^{\frac{\mathfrak{b}_2}{1+\bar{s}_2+\mathfrak{b}_2}}\lambda^{-\frac{2\mathfrak{b}_2\delta}{1+\bar{s}_2+\mathfrak{b}_2}}.
\]
Thus, we obtain
\begin{equation}
\label{eq: almost final estimate for stability potential}
\begin{split}
    &\|\Phi_{21}(\cdot,y)\|^2_{H^{\bar{s}_2}(\R^2)}\\
    &\leq C\frac{ 4^{\bar{s}_2}}{\Theta(\mathfrak{b}_2)}\frac{4^{2\mathfrak{a}}}{(\mathfrak{c}\mathfrak{d})^2}\left(\frac{1+\mathfrak{a}}{\mathfrak{c}}\right)^{\frac{2\mathfrak{c}\mathfrak{b}_1}{1+\mathfrak{a}+\mathfrak{c}}} \lambda^{2\left(\frac{\delta(1+\bar{s}_2)
    }{1+\bar{s}_2+\mathfrak{b}_2}+\kappa\right)}\| \Lambda_{\mathcal{A}^2, \Phi_2} - \Lambda_{\mathcal{A}^1, \Phi_1} \|_{*}^{\mathfrak{b}_1\mu}\\
    &\quad+\frac{C 4^{\bar{s}_2}}{\Theta(\mathfrak{b}_2)}\left(\frac{1+\bar{s}_2}{\mathfrak{b}_2}\right)^{\frac{\mathfrak{b}_2}{1+\bar{s}_2+\mathfrak{b}_2}}\lambda^{-\frac{2\mathfrak{b}_2\delta}{1+\bar{s}_2+\mathfrak{b}_2}}.
\end{split}
\end{equation}
Now, we again want to take
\begin{equation}
\label{eq: choice of lambda external potential}
    \lambda=\|\Lambda_{\mathcal{A}^{2},\Phi_{2}}- \Lambda_{\mathcal{A}^{1},\Phi_{1}}\|_{*}^{-\tau}
\end{equation}
for a suitable $\tau>0$. Inserting this into \eqref{eq: almost final estimate for stability potential} gives the bound
\begin{equation}
    \begin{split}
        &\|\Phi_{21}(\cdot,y)\|^2_{H^{\bar{s}_2}(\R^2)}\leq C\frac{ 4^{\bar{s}_2}}{\Theta(\mathfrak{b}_2)}\\
        &\times\Bigg(\frac{4^{2\mathfrak{a}}}{(\mathfrak{c}\mathfrak{d})^2}\left(\frac{1+\mathfrak{a}}{\mathfrak{c}}\right)^{\frac{2\mathfrak{c}\mathfrak{b}_1}{1+\mathfrak{a}+\mathfrak{c}}} \nu^{\mathfrak{b}_1\mu-2\tau\left(\frac{\delta(1+\bar{s}_2)
    }{1+\bar{s}_2+\mathfrak{b}_2}+\kappa\right)}+\left(\frac{1+\bar{s}_2}{\mathfrak{b}_2}\right)^{\frac{\mathfrak{b}_2}{1+\bar{s}_2+\mathfrak{b}_2}}\nu^{\frac{2\tau\mathfrak{b}_2\delta}{1+\bar{s}_2+\mathfrak{b}_2}}\Bigg).
    \end{split}
\end{equation}
where we put $\nu=\|\Lambda_{\mathcal{A}^{2},\Phi_{2}}- \Lambda_{\mathcal{A}^{1},\Phi_{1}}\|_{*}$. A direct calculation shows that the exponents of $\nu$ are equal if we choose
\begin{equation}
\label{eq: choice of rho for phi}
    \tau=\frac{\mathfrak{b_1}\mu}{2(\delta+\kappa)}
\end{equation} 
 and this yields
 \begin{equation}
 \label{eq: cond for det right exponent}
 \begin{split}
      \|\Phi_{21}(\cdot,y)\|^2_{H^{\bar{s}_2}(\R^2)}&\leq C\frac{ 4^{3\mathfrak{A}}}{(\mathfrak{C}\mathfrak{D})^2}\left(\frac{1+\mathfrak{A}}{\mathfrak{C}}\right)^{\frac{2\mathfrak{c}\mathfrak{b}_1}{1+\mathfrak{a}+\mathfrak{c}}+\frac{\mathfrak{b}_2}{1+\bar{s}_2+\mathfrak{b}_2}} \|\Lambda_{\mathcal{A}^{2},\Phi_{2}}-\Lambda_{\mathcal{A}^{1},\Phi_{1}}\|_{*}^{\zeta},
 \end{split}
 \end{equation}
 where $\zeta$ is given by
 \begin{equation}
 \label{eq: def of zeta}
     \zeta\vcentcolon = \frac{\mathfrak{b}_1\mathfrak{b}_2\delta \mu}{(1+\bar{s}_2+\mathfrak{b}_2)(\delta+\kappa)}.
 \end{equation}
 Notice that the above argument also requires $r\geq 1$ and so it only holds when
 \[
    \lambda\geq \widetilde{\lambda}_0\vcentcolon = \left(\frac{1+\bar{s}_2}{\mathfrak{b}_2}\right)^{1/2\delta}\geq 1
 \]
 and thus applies for DN maps satisfying
 \[
    \|\Lambda_{\mathcal{A}^{2},\Phi_{2}}-\Lambda_{\mathcal{A}^{1},\Phi_{1}}\|_{*}\leq \chi\vcentcolon =\min\left(\left(\frac{\mathfrak{b}_2}{1+\bar{s}_2}\right)^{\frac{\delta+\kappa}{\delta\mathfrak{b}_1\mu}},\left(\frac{\mathfrak{c}}{1+\mathfrak{a}}\right)^{\frac{\gamma+\beta}{2\gamma}}\right)
 \]
 (see \eqref{eq: smallness condition stability phi}).
 If 
 \[
    \|\Lambda_{\mathcal{A}^{2},\Phi_{2}}-\Lambda_{\mathcal{A}^{1},\Phi_{1}}\|_{*}> \chi,
 \] 
 then using \eqref{eq: general Sobolev embedding} and
 \[
    \chi\geq \left(\frac{\mathfrak{C}}{1+\mathfrak{A}}\right)^{\max(\frac{\delta+\kappa}{\delta\mathfrak{b}_1\mu},\frac{\gamma+\beta}{2\gamma})}
 \]
 we may estimate
 \begin{equation}
     \begin{split}
             &\|\Phi_{21}(\cdot,y)\|^2_{H^{\bar{s}_2}(\R^2)}\leq \|\Phi_{21}(\cdot,y)\|^2_{H^{s_2}(\R^2)}\\
    &\leq C\chi^{-\zeta}\|\Lambda_{\mathcal{A}^{2},\Phi_{2}}- \Lambda_{\mathcal{A}^{1},\Phi_{1}}\|_{*}^{\zeta}\\
    &\leq C\left(\frac{1+\mathfrak{A}}{\mathfrak{C}}\right)^{\max(\frac{\delta+\kappa}{\delta\mathfrak{b}_1\mu},\frac{\gamma+\beta}{2\gamma})\frac{\mathfrak{b}_1\mathfrak{b}_2\delta\mu}{(1+\bar{s}_2+\mathfrak{b}_2)(\delta+\kappa)}}\|\Lambda_{\mathcal{A}^{2},\Phi_{2}}- \Lambda_{\mathcal{A}^{1},\Phi_{1}}\|_{*}^{\zeta}\\
    &\leq C\left(\frac{1+\mathfrak{A}}{\mathfrak{C}}\right)^{(\frac{\delta+\kappa}{\delta\mathfrak{b}_1\mu}+\frac{\gamma+\beta}{2\gamma})\frac{\mathfrak{b}_1\mathfrak{b}_2\delta\mu}{(1+\bar{s}_2+\mathfrak{b}_2)(\delta+\kappa)}}\|\Lambda_{\mathcal{A}^{2},\Phi_{2}}- \Lambda_{\mathcal{A}^{1},\Phi_{1}}\|_{*}^{\zeta}\\
     &\leq C\left(\frac{1+\mathfrak{A}}{\mathfrak{C}}\right)^{\frac{\gamma+\beta}{\gamma}+\frac{\delta+\kappa}{\delta\mathfrak{b}_1\mu}\frac{\mathfrak{b}_1\mathfrak{b}_2\delta\mu}{(1+\bar{s}_2+\mathfrak{b}_2)(\delta+\kappa)}}\|\Lambda_{\mathcal{A}^{2},\Phi_{2}}- \Lambda_{\mathcal{A}^{1},\Phi_{1}}\|_{*}^{\zeta}\\
     &=C\left(\frac{1+\mathfrak{A}}{\mathfrak{C}}\right)^{\frac{\gamma+\beta}{\gamma}+\frac{\mathfrak{b}_2}{1+\bar{s}_2+\mathfrak{b}_2}}\|\Lambda_{\mathcal{A}^{2},\Phi_{2}}- \Lambda_{\mathcal{A}^{1},\Phi_{1}}\|_{*}^{\zeta}\\
     &\leq C\frac{ 4^{3\mathfrak{A}}}{(\mathfrak{C}\mathfrak{D})^2}\left(\frac{1+\mathfrak{A}}{\mathfrak{C}}\right)^{\frac{\gamma+\beta}{\gamma}+\frac{2\mathfrak{c}\mathfrak{b}_1}{1+\mathfrak{a}+\mathfrak{c}}+\frac{\mathfrak{b}_2}{1+\bar{s}_2+\mathfrak{b}_2}} \|\Lambda_{\mathcal{A}^{2},\Phi_{2}}-\Lambda_{\mathcal{A}^{1},\Phi_{1}}\|_{*}^{\zeta}.
     \end{split}
 \end{equation}
 Therefore, we have shown \eqref{eq: main stability estimate for external potential} in each case and we can conclude the proof.
\end{proof}

\subsection{Proof of Theorem \ref{main theorem}}
\label{subsec: proof main result}

Finally, we present here the proof of our main stability result.

\begin{proof}[Proof of Theorem \ref{main theorem}]
    Our main stability result is a consequence of Lemma \ref{lemma: stability estimate for vector potential} and Lemma \ref{lemma: stability estimate for potential}. To obtain the desired stability estimate \eqref{eq: final stability estimate} it is enough to notice the following two facts: First, the exponents
    \begin{equation}
        \begin{split}
            \mu&=\frac{2\gamma\mathfrak{c}}{(1+\mathfrak{a}+\mathfrak{c})(\gamma+\beta)}\\
            \zeta&= \frac{\mathfrak{b}_1\mathfrak{b}_2\delta\mu}{(1+\bar{s}_2+\mathfrak{b}_2)(\delta+\kappa)}
        \end{split}
    \end{equation}
    are increasing in $\mathfrak{c}$, decreasing in $\mathfrak{a}$ and increasing in $\mathfrak{b}_2$, decreasing in $\bar{s}_2$, respectively. Thus, by definition of the coefficients we may lower bound them as follows:
    \begin{equation}
    \begin{split}
         \mu &\geq \frac{2\gamma\mathfrak{C}}{(1+\mathfrak{A}+\mathfrak{C})(\gamma+\beta)}\geq \frac{\gamma\mathfrak{C}^2}{(1+\mathfrak{A}+\mathfrak{C})(\gamma+\beta)}\geq \frac{\delta \gamma\mathfrak{C}^4}{(1+\mathfrak{A}+\mathfrak{C})^2(\gamma+\beta)^2},\\
        \zeta &\geq \frac{\delta\mathfrak{C}^2}{(1+\mathfrak{A}+\mathfrak{C})(\delta+\kappa)}\geq \frac{\delta \gamma\mathfrak{C}^4}{(1+\mathfrak{A}+\mathfrak{C})^2(\delta+\kappa)^2}.
    \end{split}
    \end{equation}
    Secondly, we can take $\gamma=\delta$ and $\beta=\kappa$ in Lemma \ref{lemma: stability estimate for vector potential} and Lemma \ref{lemma: stability estimate for potential} as long as they satisfy $0<\gamma<1/11$ and $\beta\geq 3+10\gamma$. Finally, the constant in \eqref{eq: final stability estimate} is immediate from \eqref{eq: main stability estimate for vector potential} and \eqref{eq: main stability estimate for external potential}. Hence, we can conclude the proof.
\end{proof}

	\bigskip

	\noindent\textbf{Acknowledgments.}
	\begin{itemize}
		\item M.~Kumar acknowledges the support of PMRF (Prime minister research fellowship) from the government of India for his research. M. Kumar thanks Manmohan Vashisth and Venky Krishnan for helpful discussions at the beginning of the project. 
		\item P.~Zimmermann was supported by the Swiss National Science Foundation (SNSF), under grant number 214500.
	\end{itemize}

	\bibliography{refs} 

\begin{thebibliography}{BBA17b}

\bibitem[A{\" i}c15]{MR3540317}
Ibtissem~Ben A{\" i}cha.
\newblock Stability estimate for a hyperbolic inverse problem with time-dependent coefficient.
\newblock {\em Inverse Problems}, 31(12):125010, 21, 2015.

\bibitem[BB08]{MR2401822}
Mourad Bellassoued and Hajer Benjoud.
\newblock Stability estimate for an inverse problem for the wave equation in a magnetic field.
\newblock {\em Appl. Anal.}, 87(3):277--292, 2008.

\bibitem[BB11]{brezis2011functional}
Haim Brezis and Haim Br{\'e}zis.
\newblock {\em Functional analysis, Sobolev spaces and partial differential equations}, volume~2.
\newblock Springer, 2011.

\bibitem[BBA17a]{MR3706187}
Mourad Bellassoued and Ibtissem Ben~A{\" i}cha.
\newblock Optimal stability for a first order coefficient in a non-self-adjoint wave equation from {D}irichlet-to-{N}eumann map.
\newblock {\em Inverse Problems}, 33(10):105006, 23, 2017.

\bibitem[BBA17b]{MR3595191}
Mourad Bellassoued and Ibtissem Ben~A{\" i}cha.
\newblock Stable determination outside a cloaking region of two time-dependent coefficients in an hyperbolic equation from {D}irichlet to {N}eumann map.
\newblock {\em J. Math. Anal. Appl.}, 449(1):46--76, 2017.

\bibitem[BCY09]{MR2523687}
Mourad Bellassoued, Mourad Choulli, and Masahiro Yamamoto.
\newblock Stability estimate for an inverse wave equation and a multidimensional {B}org-{L}evinson theorem.
\newblock {\em J. Differential Equations}, 247(2):465--494, 2009.

\bibitem[BDSF11]{MR2852371}
Mourad Bellassoued and David Dos Santos~Ferreira.
\newblock Stability estimates for the anisotropic wave equation from the {D}irichlet-to-{N}eumann map.
\newblock {\em Inverse Probl. Imaging}, 5(4):745--773, 2011.

\bibitem[Bel87]{Bel87}
Mikhail~I. Belishev.
\newblock An approach to multidimensional inverse problems for the wave equation.
\newblock In {\em Doklady Akademii Nauk}, volume 297, pages 524--527. Russian Academy of Sciences, 1987.

\bibitem[Bel22]{MR4366889}
Mourad Bellassoued.
\newblock Stable recovery of a metric tensor from the partial hyperbolic {D}irichlet to {N}eumann map.
\newblock {\em J. Differential Equations}, 313:557--596, 2022.

\bibitem[BK81]{BK81}
Alexander~L. Bukhge\u{i}m and Michael~V. Klibanov.
\newblock Uniqueness in the large of a class of multidimensional inverse problems.
\newblock {\em Dokl. Akad. Nauk SSSR}, 260(2):269--272, 1981.

\bibitem[BK92]{BK92}
Michael~I. Belishev and Yarosiav~V. Kuryiev.
\newblock To the reconstruction of a {R}iemannian manifold via its spectral data ({BC}-method).
\newblock {\em Communications in Partial Differential Equations}, 17:767--804, 1992.

\bibitem[BKS16]{BKS16}
Mourad Bellassoued, Yavar Kian, and Eric Soccorsi.
\newblock An inverse stability result for non-compactly supported potentials by one arbitrary lateral {N}eumann observation.
\newblock {\em J. Differential Equations}, 260(10):7535--7562, 2016.

\bibitem[BKS18]{BKS18}
Mourad Bellassoued, Yavar Kian, and Eric Soccorsi.
\newblock An inverse problem for the magnetic {S}chr{\"o}dinger equation in infinite cylindrical domains.
\newblock {\em Publications of the Research Institute for Mathematical Sciences}, 54(4):679--728, 2018.

\bibitem[BR19a]{MR4013301}
Mourad Bellassoued and Imen Rassas.
\newblock Stability estimate in the determination of a time-dependent coefficient for hyperbolic equation by partial {D}irichlet-to-{N}eumann map.
\newblock {\em Appl. Anal.}, 98(15):2751--2782, 2019.

\bibitem[BR19b]{MR3995367}
Mourad Bellassoued and Zouhour Rezig.
\newblock Simultaneous determination of two coefficients in the {R}iemannian hyperbolic equation from boundary measurements.
\newblock {\em Ann. Global Anal. Geom.}, 56(2):291--325, 2019.

\bibitem[CKS15]{CKS15}
Mourad Choulli, Yavar Kian, and Eric Soccorsi.
\newblock Stable determination of time-dependent scalar potential from boundary measurements in a periodic quantum waveguide.
\newblock {\em SIAM Journal on Mathematical Analysis}, 47(6):4536--4558, 2015.

\bibitem[CKS17]{CKS17}
Mourad Choulli, Yavar Kian, and Eric Soccorsi.
\newblock On the {C}alder{\'o}n problem in periodic cylindrical domain with partial {D}irichlet and {N}eumann data.
\newblock {\em Mathematical Methods in the Applied Sciences}, 40(16):5959--5974, 2017.

\bibitem[CKS18]{CKS18}
Mourad Choulli, Yavar Kian, and Eric Soccorsi.
\newblock Stability result for elliptic inverse periodic coefficient problem by partial {D}irichlet-to-{N}eumann map.
\newblock {\em Journal of Spectral Theory}, 8(2):733--768, 2018.

\bibitem[CL05]{MR2132903}
Rolci Cipolatti and Ivo~F. Lopez.
\newblock Determination of coefficients for a dissipative wave equation via boundary measurements.
\newblock {\em J. Math. Anal. Appl.}, 306(1):317--329, 2005.

\bibitem[CLS16]{cristofol2015determining}
Michel Cristofol, Shumin Li, and Eric Soccorsi.
\newblock Determining the waveguide conductivity in a hyperbolic equation from a single measurement on the lateral boundary.
\newblock {\em Math. Control Relat. Fields}, 6(3):407--427, 2016.

\bibitem[DL92]{DautrayLionsVol5}
Robert Dautray and Jacques-Louis Lions.
\newblock {\em Mathematical analysis and numerical methods for science and technology. {V}ol. 5}.
\newblock Springer-Verlag, Berlin, 1992.
\newblock Evolution problems. I, With the collaboration of Michel Artola, Michel Cessenat and H\'{e}l\`ene Lanchon, Translated from the French by Alan Craig.

\bibitem[Esk06]{MR2235639}
Gregory Eskin.
\newblock A new approach to hyperbolic inverse problems.
\newblock {\em Inverse Problems}, 22(3):815--831, 2006.

\bibitem[Esk07]{MR2441006}
Gregory Eskin.
\newblock A new approach to hyperbolic inverse problems. {II}. {G}lobal step.
\newblock {\em Inverse Problems}, 23(6):2343--2356, 2007.

\bibitem[GR12]{girault2012finite}
Vivette Girault and Pierre-Arnaud Raviart.
\newblock {\em Finite element methods for Navier-Stokes equations: theory and algorithms}, volume~5.
\newblock Springer Science \& Business Media, 2012.

\bibitem[IS92]{MR1158175}
Victor Isakov and Zi~Qi Sun.
\newblock Stability estimates for hyperbolic inverse problems with local boundary data.
\newblock {\em Inverse Problems}, 8(2):193--206, 1992.

\bibitem[Isa91a]{Isa91}
Victor Isakov.
\newblock Completeness of products of solutions and some inverse problems for {PDE}.
\newblock {\em Journal of differential equations}, 92(2):305--316, 1991.

\bibitem[Isa91b]{MR1116858}
Victor Isakov.
\newblock An inverse hyperbolic problem with many boundary measurements.
\newblock {\em Comm. Partial Differential Equations}, 16(6-7):1183--1195, 1991.

\bibitem[Kia14]{Kia14}
Yavar Kian.
\newblock Stability of the determination of a coefficient for wave equations in an infinite waveguide.
\newblock {\em Inverse Probl. Imaging}, 8(3):713--732, 2014.

\bibitem[Kia16]{Kia16a}
Yavar Kian.
\newblock Recovery of time-dependent damping coefficients and potentials appearing in wave equations from partial data.
\newblock {\em SIAM Journal on Mathematical Analysis}, 48(6):4021--4046, 2016.

\bibitem[Kia17]{MR3661867}
Yavar Kian.
\newblock Unique determination of a time-dependent potential for wave equations from partial data.
\newblock {\em Ann. Inst. H. Poincar\'e{} C Anal. Non Lin\'eaire}, 34(4):973--990, 2017.

\bibitem[Kia19]{Kia20a}
Yavar Kian.
\newblock Determination of non-compactly supported electromagnetic potentials in an unbounded closed waveguide.
\newblock {\em Revista Matem{\'a}tica Iberoamericana}, 36(3):671--710, 2019.

\bibitem[Kia20]{Kia20b}
Yavar Kian.
\newblock Recovery of non-compactly supported coefficients of elliptic equations on an infinite waveguide.
\newblock {\em Journal of the Institute of Mathematics of Jussieu}, 19(5):1573--1600, 2020.

\bibitem[KPS14]{KPS14}
Yavar Kian, Quang~Sang Phan, and Eric Soccorsi.
\newblock A {C}arleman estimate for infinite cylindrical quantum domains and the application to inverse problems.
\newblock {\em Inverse Problems}, 30(5):055016, 2014.

\bibitem[KPS15]{KPS15}
Yavar Kian, Quang~Sang Phan, and Eric Soccorsi.
\newblock H{\"o}lder stable determination of a quantum scalar potential in unbounded cylindrical domains.
\newblock {\em Journal of Mathematical Analysis and applications}, 426(1):194--210, 2015.

\bibitem[KS21]{KS21}
Yavar Kian and Yosra Soussi.
\newblock Stable recovery of noncompactly supported electromagnetic potentials in unbounded domain.
\newblock {\em Mathematical Methods in the Applied Sciences}, 44(17):13421--13447, 2021.

\bibitem[KSV24]{MR4835772}
Nitesh Kumar, Tanmay Sarkar, and Manmohan Vashisth.
\newblock Stable determination of a time-dependent matrix potential for a wave equation in an infinite waveguide.
\newblock {\em Commun. Anal. Comput.}, 2(4):343--366, 2024.

\bibitem[KV20]{MR4124641}
Venkateswaran~P. Krishnan and Manmohan Vashisth.
\newblock An inverse problem for the relativistic {S}chr{\"o}dinger equation with partial boundary data.
\newblock {\em Appl. Anal.}, 99(11):1889--1909, 2020.

\bibitem[Leo24]{leoni2024first}
Giovanni Leoni.
\newblock {\em A first course in Sobolev spaces}, volume 181.
\newblock American Mathematical Society, 2024.

\bibitem[LM72]{lions2012nonVol2}
Jacques~Louis Lions and Enrico Magenes.
\newblock {\em Non-homogeneous boundary value problems and applications: Vol. 2}, volume 182.
\newblock Springer Science \& Business Media, 1972.

\bibitem[LM12]{lions2012nonVol1}
Jacques~Louis Lions and Enrico Magenes.
\newblock {\em Non-homogeneous boundary value problems and applications: Vol. 1}, volume 181.
\newblock Springer Science \& Business Media, 2012.

\bibitem[LTZ24]{Semilinear-nonlocal-WEQ}
Yi-Hsuan Lin, Teemu Tyni, and Philipp Zimmermann.
\newblock Well-posedness and inverse problems for semilinear nonlocal wave equations.
\newblock {\em Nonlinear Analysis}, 247:113601, 2024.

\bibitem[MV21]{MR4343270}
Rohit~Kumar Mishra and Manmohan Vashisth.
\newblock Determining the time-dependent matrix potential in a wave equation from partial boundary data.
\newblock {\em Appl. Anal.}, 100(16):3492--3508, 2021.

\bibitem[Reg23]{reggia2023generalizing}
James~A Reggia.
\newblock Generalizing {M}axwell’s equations to complex-valued electromagnetic fields.
\newblock {\em Physica Scripta}, 99(1):015513, 2023.

\bibitem[RR91]{RR91}
A.G Ramm and Rakesh.
\newblock Property c and an inverse problem for a hyperbolic equation.
\newblock {\em Journal of Mathematical Analysis and Applications}, 156(1):209--219, 1991.

\bibitem[RS88]{MR914815}
Rakesh and William~W. Symes.
\newblock Uniqueness for an inverse problem for the wave equation.
\newblock {\em Comm. Partial Differential Equations}, 13(1):87--96, 1988.

\bibitem[RS91]{RS91}
Alexander~G Ramm and Johannes Sj{\"o}strand.
\newblock An inverse problem of the wave equation.
\newblock {\em Mathematische Zeitschrift}, 206:119--130, 1991.

\bibitem[Sal13]{Sal13}
Ricardo Salazar.
\newblock Determination of time-dependent coefficients for a hyperbolic inverse problem.
\newblock {\em Inverse Problems}, 29(9):095015, 2013.

\bibitem[Sen21]{MR4191617}
Soumen Senapati.
\newblock Stability estimates for the relativistic {S}chr\"odinger equation from partial boundary data.
\newblock {\em Inverse Problems}, 37(1):Paper No. 015001, 25, 2021.

\bibitem[Sou21]{Sou21}
Yosra Soussi.
\newblock Stable recovery of a non-compactly supported coefficient of a {S}chr\"odinger equation on an infinite waveguide.
\newblock {\em Inverse Probl. Imaging}, 15(5):929--950, 2021.

\bibitem[SU87]{MR873380}
John Sylvester and Gunther Uhlmann.
\newblock A global uniqueness theorem for an inverse boundary value problem.
\newblock {\em Ann. of Math. (2)}, 125(1):153--169, 1987.

\bibitem[SU98]{MR1612709}
Plamen Stefanov and Gunther Uhlmann.
\newblock Stability estimates for the hyperbolic {D}irichlet to {N}eumann map in anisotropic media.
\newblock {\em J. Funct. Anal.}, 154(2):330--358, 1998.

\bibitem[Sun90]{MR1059582}
Zi~Qi Sun.
\newblock On continuous dependence for an inverse initial-boundary value problem for the wave equation.
\newblock {\em J. Math. Anal. Appl.}, 150(1):188--204, 1990.

\bibitem[Tat95]{Tat95}
Daniel Tataru.
\newblock Unique continuation for solutions to pde's; between hormander's theorem and holmgren's theorem.
\newblock {\em Communications in partial differential equations}, 20(5-6):855--884, 1995.

\bibitem[Won14]{WO-pseudodifferential-operatros}
M.~W. Wong.
\newblock {\em An introduction to pseudo-differential operators}, volume~6 of {\em Series on Analysis, Applications and Computation}.
\newblock World Scientific Publishing Co. Pte. Ltd., Hackensack, NJ, third edition, 2014.

\end{thebibliography}
	
	\bibliographystyle{alpha}
	
\end{document}